\subjclass[2010]{11N25,11N37 and 11N69}
\newtheorem{thm}{Theorem}[section]
\newtheorem{prop}[thm]{Proposition}
\newtheorem{cor}[thm]{Corollary}
\newtheorem{lem}[thm]{Lemma}
\theoremstyle{definition}
\newtheorem{rem}[thm]{Remark}
\newcommand{\ra}{\rightarrow}
\newcommand{\bk}{\backslash}
\newcommand{\sg}{\sigma}
\newcommand{\mb}{\mathbb}
\newcommand{\mf}{\mathfrak}
\newcommand{\C}{\mb{C}}
\newcommand{\N}{\mb{N}}
\newcommand{\Z}{\mb{Z}}
\newcommand{\G}{\mb{G}}
\newcommand{\Q}{\mb{Q}}
\newcommand{\F}{\mb{F}}
\newcommand{\mc}{\mathcal}
\newcommand{\Fc}{\mc{F}}
\newcommand{\Gc}{\mc{G}}
\newcommand{\Hc}{\mc{H}}
\newcommand{\Lc}{\mc{L}}
\newcommand{\ord}{\operatorname{ord}}
\newcommand{\SL}{\operatorname{SL}}
\newcommand{\geom}{\mathrm{geom}}
\newcommand{\arith}{\mathrm{arith}}
\newcommand{\rank}{\operatorname{rank}}
\newcommand{\tr}{\operatorname{tr}}
\newcommand{\Std}{\operatorname{Std}}
\newcommand{\mult}{\operatorname{mult}}
\newcommand{\Frob}{\operatorname{Frob}}
\newcommand{\Sing}{\operatorname{Sing}}
\newcommand{\Swan}{\operatorname{Swan}}
\newcommand{\Gal}{\operatorname{Gal}}
\newcommand{\llf}{\left\lfloor}
\newcommand{\e}{\varepsilon}
\newcommand{\eps}{\epsilon}
\newcommand{\rrf}{\right\rfloor}
\newcommand{\mbf}{\boldsymbol}
\newcommand{\asum}{\sideset{}{^{\ast}}\sum}
\newcommand{\ssum}{\sideset{}{^{\star}}\sum}
\renewcommand{\bar}{\overline}
\let\@@pmod\pmod
\DeclareRobustCommand{\pmod}{\@ifstar\@pmods\@@pmod}
\def\@pmods#1{\mkern4mu({\operator@font mod}\mkern 6mu#1)}
\begin{document}
\title{Squarefree Integers in Arithmetic Progressions to Smooth Moduli}
\author{Alexander P. Mangerel}
\address{Centre de Recherches Math\'{e}matiques, Universit\'{e} de Montr\'{e}al, Pavillon Andr\'{e}-Aisenstadt, 2920 Chemin de la Tour, Montr\'{e}al, Qu\'{e}bec H3T 1N8} \address{
Department of Mathematical Sciences, Durham University, Upper Mountjoy Campus, Stockton Road, Durham DH1 3LE}
\email{smangerel@gmail.com}

\begin{abstract}
Let $\e > 0$ be sufficiently small and let $0 < \eta < 1/522$. We show that if $X$ is large enough in terms of $\e$ then for any squarefree integer $q \leq X^{196/261-\e}$ that is $X^{\eta}$-smooth one can obtain an asymptotic formula with power-saving error term for the number of squarefree integers in an arithmetic progression $a \pmod{q}$, with $(a,q) = 1$. In the case of squarefree, smooth moduli this improves upon previous work of Nunes, in which $196/261 = 0.75096...$ was replaced by $25/36 = 0.69\bar{4}$. This also establishes a level of distribution for a positive density set of moduli that improves upon a result of Hooley. We show more generally that one can break the $X^{3/4}$-barrier for a density 1 set of $X^{\eta}$-smooth moduli $q$ (without the squarefree condition). \\
Our proof appeals to the $q$-analogue of the van der Corput method of exponential sums, due to Heath-Brown, to reduce the task to estimating correlations of certain Kloosterman-type complete exponential sums modulo prime powers. In the prime case we obtain a power-saving bound via a cohomological treatment of these complete sums, while in the higher prime power case we establish savings of this kind using $p$-adic methods.
\end{abstract}

\maketitle
\setcounter{tocdepth}{1}
\tableofcontents

\section{Introduction}
It is a classical problem in analytic number theory to study the distribution of well-known sequences in arithmetic progressions. The example of interest to us here is the sequence of square-free integers, i.e., integers $n$ such that $p^2 \nmid n$ for all primes $p$. 
Writing the indicator function for this sequence as $\mu^2(n)$ for all $n \in \mb{N}$, where $\mu$ denotes the M\"{o}bius function\footnote{Recall that the M\"{o}bius function is the arithmetic function satisfying $\mu(n) = 0$ if $n$ is not squarefree, and otherwise $\mu(p_1\cdots p_k) = (-1)^k$ if $p_1,\ldots,p_k$ are distinct primes.}, it is known that for a given modulus $q$, the asymptotic equidistribution estimate 
\begin{align}\label{eq:asymp}
\sum_{n \leq X \atop n \equiv a \pmod{q}} \mu^2(n) = \frac{1}{\phi(q/(q,a))} \sum_{n \leq X \atop (n,q) = (a,q)} \mu^2(n) + o(X/q)
\end{align}
for the count of squarefree integers in a progression $a \pmod{q}$ holds if $X$ is sufficiently large relative to $q$. It is a challenging question to determine the optimal constant $\theta \in [0,1)$ such that \eqref{eq:asymp} holds as soon as $q \leq X^{\theta}$. It is widely believed that any $\theta < 1$ should be admissible, but this is far from proven in general. 
It has been known for some time (see \cite{Prach}) that any $\theta < 2/3$ is admissible. At present, the best result that is available for all moduli $q$ is that any $\theta < 25/36 = 0.69\bar{4}$ is admissible, which is a recent result of Nunes \cite{Nun}. \\
If we impose constraints on the modulus $q$ then one might expect that it is possible to improve this range of $\theta$. For instance, Hooley \cite{Hoo75} showed that $\theta$ can be taken in the interval $[2/3,3/4)$ provided that $q$ has a moderately large\footnote{For instance, when $\theta = 3/4-\e$ for $\e > 0$ small, $p$ must have size $X^{2/3-\e}$.} prime factor $p$, the size of which depends on $\theta$. As a consequence, he deduced that for a positive proportion of moduli $q \leq X^{3/4-\e}$ the asymptotic \eqref{eq:asymp} holds (the proportion depends on $\e$ and tends to $0$ as $\e \ra 0^+$). Recently, Liu, Shparlinski and Zhang \cite{LSZ} improved upon this result by proving a Bombieri-Vinogradov type theorem for squarefree integers, showing that the required asymptotic formula holds for all residue classes $(a,q) = 1$ for almost all $q \leq X^{3/4-\e}$.\\
%
In this paper, we shall improve upon Nunes' result for a different, natural collection of moduli, specifically those $q$ that are $X^{\eta}$-smooth (otherwise called $X^{\eta}$-friable), i.e., such that all prime factors $p$ of $q$ satisfy $p \leq X^{\eta}$ for $\eta > 0$ small. For many such moduli, including all squarefree $X^{\eta}$-smooth moduli, we shall in fact be able to improve upon the range of admissibility $\theta < 3/4$ from the works of Hooley and of Liu, Shparlinski and Zhang.\\
More precisely, our first main result is as follows. 
\begin{thm}\label{thm:Smooth}
Let $\e > 0$ be small and let $0 < \eta < 1/522$. Let $X$ be large in terms of $\eta,\e$, and let $q \leq X^{196/261-\e}$ be $X^{\eta}$-smooth and squarefree. Then there is a $\delta > 0$, depending only on $\eta$ and $\e$, such that for any $a \pmod{q}$ with $(a,q) \leq X^{\e}$,
$$
\sum_{n \leq X \atop n \equiv a \pmod{q}} \mu^2(n) = \frac{1}{\phi(q/(q,a))} \sum_{n \leq X \atop (n,q/(q,a)) = 1} \mu^2(n) + O_{\e}\left(\frac{X^{1-\delta}}{q}\right).
$$
\end{thm}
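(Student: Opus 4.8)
I would first reduce to the case $(a,q)=1$. Writing $g=(a,q)\le X^{\e}$, the conditions $\mu^2(n)\ne 0$ and $n\equiv a\pmod q$ force $g$ squarefree (else the sum is empty) and $n=gm$ with $(m,g)=1$ and $m\equiv a/g\pmod{q/g}$, where $q/g$ is again $X^{\eta}$-smooth and squarefree and $(a/g,q/g)=1$; the extra condition $(m,g)=1$ is removed by inclusion--exclusion over divisors of $g$ at a cost of $2^{\omega(g)}\le X^{\e}$, so one is reduced to the coprime case for a smooth squarefree modulus. Assuming $(a,q)=1$, I would use $\mu^2(n)=\sum_{d^2\mid n}\mu(d)$, note $d\le X^{1/2}$ and that $d^2\mid n$, $n\equiv a\pmod q$ forces $(d,q)=1$, to get
$$\sum_{n\le X,\ n\equiv a\pmod q}\mu^2(n)=\sum_{d\le X^{1/2},\ (d,q)=1}\mu(d)\,\#\{e\le X/d^2:\ e\equiv a\overline{d^2}\pmod q\}.$$
Splitting at a parameter $D_0=X^{\delta_0}$, the terms with $d\le D_0$ give, after extending $\sum_{(d,q)=1}\mu(d)/d^2$ to infinity and comparing with $\phi(q/(q,a))^{-1}\sum_{n\le X,(n,q)=1}\mu^2(n)$ via the Euler factor $\prod_{p\mid q}(1-p^{-2})^{-1}$, the asserted main term with error $O(D_0+X/(qD_0))$.

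\textbf{Passage to incomplete exponential sums.} The remaining task is to bound, for $D_0<d\le X^{1/2}$, the fluctuation $\vartheta_d:=\#\{e\le X/d^2:\ e\equiv a\overline{d^2}\pmod q\}-X/(qd^2)$. Detecting the congruence with additive characters $e_q(t)=e^{2\pi i t/q}$ and removing the frequency $h=0$ (absorbed into the main term), I would arrive at
$$\sum_{D_0<d\le X^{1/2}}\mu(d)\vartheta_d=\frac1q\sum_{0<|h|\le q/2}\ \sum_{D_0<d\le X^{1/2}}\mu(d)\,e_q\!\big(-ha\,\overline{d^2}\big)\,\widehat I_d(h)+O\!\big(X^{1/2}/q\big),$$
where $\widehat I_d(h)=\sum_{e\le X/d^2}e_q(he)$ satisfies $|\widehat I_d(h)|\ll\min(X/d^2,\ q/|h|)$. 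Decomposing $d\sim D$ and $|h|\sim H$ dyadically and removing the smooth weight $\widehat I_d(h)$ by partial summation, everything reduces to a power-saving bound for the incomplete exponential sums
$$T(D;h)=\sum_{d\sim D,\ (d,q)=1}\alpha_d\,e_q\!\big(\beta\,\overline{d^2}\big),\qquad \beta=-ha,\quad|\alpha_d|\le 1,$$
uniformly in the relevant ranges $D_0<D\le X^{1/2}$ and $0<|h|\le q/2$. The coefficient $\mu(d)$ can be kept as a bounded weight, or, where a bilinear structure is wanted, split by a Heath-Brown/Vaughan identity; the phase $\overline{d^2}$ behaves well under either manipulation.

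\textbf{The $q$-van der Corput step.} Because $q$ is $X^{\eta}$-smooth one may factor $q=q_1q_2$ with $q_1$ of any prescribed size, up to a multiplicative loss of $X^{O(\eta)}$. I would then apply Heath-Brown's $q$-analogue of the van der Corput method to $T(D;h)$ --- shifting $d\mapsto d+\ell q_1$ over a range of $\ell$, applying Cauchy--Schwarz, and completing the resulting sum modulo a divisor of $q$ --- reducing matters to short averages of complete ``correlation'' sums modulo the prime powers $p^k\parallel q$, of the shape
$$\mathfrak S_{p^k}(\mathbf a)=\sum_{x\bmod p^k,\ (x,p)=1}e_{p^k}\!\Big(\beta\sum_i \frac{\epsilon_i}{(x+a_i)^2}\Big),\qquad \epsilon_i=\pm1,$$
with shift parameters $a_i$ produced by the differencing. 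A bound $|\mathfrak S_{p^k}(\mathbf a)|\ll_k p^{k(1-\kappa)}$ for some fixed $\kappa>0$, valid off a thin ``degenerate'' locus in the $a_i$ (whose contribution is controlled by the trivial bound), is then fed back through the van der Corput bookkeeping; optimising the size of $q_1$, the cutoff $D_0$, and the number of iterations against the length $X^{1/2}$ and the modulus $q\le X^{196/261-\e}$, while keeping the $X^{O(\eta)}$ smoothness losses under control --- which is precisely what forces $\eta<1/522$ --- should yield the total error $O_{\e}(X^{1-\delta}/q)$.

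\textbf{The complete sums, and the main obstacle.} The hard part is establishing the power-saving bound for $\mathfrak S_{p^k}(\mathbf a)$ with an explicitly controlled exceptional set. For $k=1$ this is a complete sum over $\F_p$ attached to the rational function $R(x)=\beta\sum_i\epsilon_i(x+a_i)^{-2}$; the Weil bound gives square-root cancellation unless $R$ is degenerate (constant, or of the form $g^p-g$ plus a linear term, and so on), and the core of the argument is a cohomological analysis of the associated sheaf that identifies exactly which configurations of the $a_i$ (and which coincidences modulo $p$) are degenerate and shows them to be sufficiently sparse. For $k\ge2$ I would instead argue $p$-adically: a stationary-phase (Cochrane--Zheng type) analysis localises $\mathfrak S_{p^k}(\mathbf a)$ at the zeros modulo $p$ of $R'$ and evaluates it with a comparable saving away from the locus where those critical points are too degenerate. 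Matching the two cases so that they deliver cancellation of the same strength, and threading this uniformly through the van der Corput estimate without degrading the exponents $196/261$ and $1/522$, is the step I expect to be the main obstacle; the rest is bookkeeping.
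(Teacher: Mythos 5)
The essential gap is that your proposal omits the paper's key new ingredient. Before any completion or van der Corput differencing, the paper replaces the modulus $q$ by a suitably sized \emph{divisor} $\tilde q\mid q$ (the ``Hooley trick''). The point is the elementary inequality
\[
\#\{(m,d)\in I\times J:\ md^2\equiv a\ \mathrm{mod}\ q\}\ \le\ \#\{(m,d)\in I\times J:\ md^2\equiv a\ \mathrm{mod}\ \tilde q\},
\]
valid for any $\tilde q\mid q$, combined with the fact that $X^{\eta}$-smoothness supplies a divisor $\tilde q$ of essentially any prescribed size (Lemma~\ref{lem:factSmooth}). Passing to $\tilde q$ \emph{before} completing the $d$-sum puts the entire subsequent exponential-sum analysis at the smaller modulus $\tilde q$, and the trivial pointwise bound $\ll\tilde q^{1/2+\e}$ from Lemma~\ref{lem:WeilK2} then already delivers $q\le X^{3/4-\e}$ with no van der Corput differencing whatsoever (this is Subsection~\ref{subsec:3of4}); the correlation machinery is layered on top of that to win the further $+1/1044$. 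Your proposal instead applies van der Corput directly to $\sum_{d\sim D}e_q(\beta\bar d^2)$ at the full modulus $q$. As the paper records explicitly in the introduction, this Irving-style strategy ``by itself is insufficient in the case of squarefree integers, even falling short of Nunes' result'' $25/36$. Without first contracting the modulus to $\tilde q$, no choice of $q_1$, differencing depth, or cutoff $D_0$ can reach $196/261$, so the parameter optimisation you describe cannot close.

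A second, also substantive, divergence is \emph{where} the van der Corput iteration is applied. The paper does not difference the raw $d$-sum: it first completes $\sum_{d\in I}e_{\tilde q}(ka\bar d^2)$ into the Kloosterman-type sums $K_2(k'a,r;\tilde q)$, and then applies Proposition~\ref{prop:qvdC} to the resulting \emph{incomplete sum in the dual variable} $r$. The correlation sums that must be bounded are therefore sums of products of shifted $K_2$-sums, $\sum_b e_{p^n}(cb)\prod_i K_2(a,b+h_i;p^n)\prod_j\bar K_2(a,b+h'_j;p^n)$: rank-$3^{N+M}$ trace-function correlations whose degeneracy analysis at prime modulus rests on the $\SL_3$ monodromy of Katz's sheaf $\Gc_A$ and Goursat--Kolchin--Ribet (Theorem~\ref{thm:boundCorrCompl}), and whose degenerate locus $3\mid(\mu(\tau)-\nu(\tau))$ is counted via Lemma~\ref{lem:combo}. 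Your approach differences the original phase $e_q(\beta\bar d^2)$ and lands on one-variable Artin--Schreier sums $\sum_x e_{p^k}\bigl(\beta\sum_i\epsilon_i/(x+a_i)^2\bigr)$; this is a genuinely different family with its own degeneracy structure, and the ``thin degenerate locus'' you invoke would need to be characterised and counted afresh (including the higher prime power case, where the paper needs the Mili\'cevi\'c--Zhang iterated stationary phase \emph{and} a Vinogradov/Bourgain--Demeter--Guth input, not just a Cochrane--Zheng-type bound). Even granting all of that, the first point above remains fatal: without the passage to $\tilde q$, the exponent $196/261$ is out of reach.
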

\begin{rem}
Our proof actually shows that for $q \leq X^{3/4-\e}$ one may take as the smoothness exponent any $\eta \leq 6/25$, whereas for $X^{3/4-\e} < q \leq X^{196/261-\e}$ the admissible range is $\eta < 1/522$; it is probable that this range in $\eta$ may be improved somewhat, though it was not our primary objective to obtain an optimal such range. 
\end{rem}

\noindent Our result has the following trivial corollaries. The first improves upon the range\footnote{We emphasize that $196/261 = 3/4 + 1/1044 > 3/4$.} of moduli that is accessible in the density result of Hooley quoted earlier.
\begin{cor} \label{cor:posProp}
For any $\e > 0$, a positive proportion of the moduli $q \leq X^{196/261-\e}$ satisfy
$$
\max_{(a,q)= 1} \left|\sum_{n \leq X \atop n \equiv a \pmod{q}} \mu^2(n) - \frac{1}{\phi(q)} \sum_{n \leq X \atop (n,q) = 1} \mu^2(n)\right| = o_{\e}(X/q).
$$
\end{cor}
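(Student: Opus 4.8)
The plan is to note that this corollary is, as the authors say, "trivial" given Theorem \ref{thm:Smooth}: it is a matter of checking that a positive proportion of $q \leq X^{196/261-\e}$ are $X^{\eta}$-smooth and squarefree for some fixed $\eta$ in the admissible range, and then applying the theorem verbatim to those moduli (with $a$ coprime to $q$, so that $(a,q)=1 \leq X^{\e}$ and $q/(q,a) = q$). Fix $\eta = 1/523 < 1/522$, say, and write $Q = X^{196/261-\e}$.

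First I would recall the two standard density facts. The number of $X^{\eta}$-smooth integers up to $Q$ is $\asymp_{\eta} \rho(u) Q$ where $u = \log Q/\log(X^{\eta}) = (196/261-\e)/\eta$ is a fixed constant and $\rho$ is the Dickman function; since $\rho(u) > 0$ for every fixed $u$, this is $\gg_{\e,\eta} Q$. (Even the crude bound that a positive proportion of $n \leq Q$ have all prime factors at most $Q^{c}$ for any fixed $c>0$ suffices here, since $X^{\eta} \geq Q^{\eta'}$ for a suitable fixed $\eta' > 0$.) Second, among any set of integers of positive density, a positive proportion are squarefree: indeed the non-squarefree integers up to $Q$ number $O(Q \sum_{p} p^{-2}) $, and more precisely $(1 - 6/\pi^2 + o(1))Q$, but for our purposes it is enough that one can remove the non-squarefree smooth integers and retain $\gg_{\e,\eta} Q$ of them — e.g. because an integer all of whose prime factors are at most $X^{\eta}$ is squarefree unless divisible by $p^2$ for some $p \leq X^{\eta}$, contributing $\leq \sum_{p \leq X^{\eta}} Q/p^2 \leq Q \sum_{p} p^{-2} < Q/2$ once we also insist, as we may, that the smooth integers we count avoid small square factors. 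Combining, there is a set $\mc{Q} \subseteq [1,Q]$ with $|\mc{Q}| \gg_{\e,\eta} Q$ consisting of squarefree $X^{\eta}$-smooth integers.

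For each $q \in \mc{Q}$, Theorem \ref{thm:Smooth} applies with this choice of $\eta$ and gives, for every $a$ with $(a,q)=1$,
$$
\sum_{\substack{n \leq X \\ n \equiv a \pmod{q}}} \mu^2(n) - \frac{1}{\phi(q)} \sum_{\substack{n \leq X \\ (n,q)=1}} \mu^2(n) = O_{\e}\!\left(\frac{X^{1-\delta}}{q}\right),
$$
with $\delta = \delta(\e,\eta) > 0$. Since the implied constant and $\delta$ do not depend on $a$, taking the maximum over $(a,q) = 1$ the left-hand side of the corollary is $O_{\e}(X^{1-\delta}/q) = o_{\e}(X/q)$ for every $q \in \mc{Q}$. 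As $|\mc{Q}| \gg_{\e} Q = X^{196/261 - \e}$, a positive proportion of the moduli $q \leq X^{196/261-\e}$ enjoy the stated bound, which is exactly the assertion of the corollary.

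The only point requiring any care — and it is a mild one — is making the density bound for smooth squarefree integers genuinely uniform in the (fixed, but $\e$-dependent) parameters: one must keep track that $u = (196/261-\e)/\eta$ stays bounded as $\e \to 0^+$ so that $\rho(u)$ stays bounded away from $0$, which it does since $\eta$ is a fixed constant independent of $\e$ (we chose $\eta = 1/523$) and $196/261 - \e \leq 196/261$. No exponential-sum input beyond Theorem \ref{thm:Smooth} is needed, so there is no real obstacle here; the corollary is indeed immediate.
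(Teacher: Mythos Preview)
Your overall strategy matches the paper's: exhibit a positive-proportion set of squarefree $X^{\eta}$-smooth moduli and apply Theorem~\ref{thm:Smooth} to each. The application of the theorem is fine. The gap is in your density argument for squarefree smooth integers.

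The sentence ``among any set of integers of positive density, a positive proportion are squarefree'' is false (consider $\{4n\}$), and your specific argument does not recover: with $\eta = 1/523$ you have $u = (196/261-\e)/\eta \approx 392$, so the smooth count is $\rho(u)Q$ with $\rho(u)$ astronomically small, while your bound on non-squarefree smooth integers is the \emph{trivial} bound $\sum_{p} Q/p^2 < Q/2$ on all non-squarefree integers. Subtracting $Q/2$ from $\rho(u)Q$ leaves nothing. The vague clause ``once we also insist\ldots\ that the smooth integers we count avoid small square factors'' does not repair this: you still need the tail $\sum_{p > P_0} Q/p^2 \ll Q/P_0$ to be smaller than $\rho(u)Q$, which forces $P_0$ to depend on $\rho(u)$, and then the sieve over $p \leq P_0$ must itself be carried out inside the smooth numbers.

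The paper closes this gap by writing $\mu^2(q) = \sum_{d^2\mid q}\mu(d)$, truncating at $D = (\log X)^{1/2}$, and using the uniform smooth-number estimate $\Psi(Y/d^2, X^{\eta}) = \rho(u - v_d)Y/d^2 + O_{\eta}(Y/(d^2\log X))$ for $d \leq D$, together with the Lipschitz bound $|\rho(u)-\rho(u-v_d)| \ll_{\eta} (\log\log X)/\log X$; this yields the squarefree smooth count $\frac{6}{\pi^2}\rho(u)Y(1+o(1))$. A quicker patch for your argument would be to invoke the standard estimate $\Psi(Q/m, y) \ll \Psi(Q,y)/m$ uniformly (see e.g.\ Tenenbaum, III.5), giving $\sum_{p}\Psi(Q/p^2, X^{\eta}) \ll \Psi(Q,X^{\eta})\sum_p p^{-2} < \tfrac12\Psi(Q,X^{\eta})$, which is what you need.
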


\begin{cor}
Let $\e > 0$ be small, $0 < \eta < 1/522$ and let $X$ be sufficiently large in terms of $\eta$ and $\e$. Let $q$ be squarefree and $X^{\eta}$-smooth, and let $a$ be a coprime residue class modulo $q$. Then provided $X \geq q^{261/196+\e}$ there is a squarefree integer in the set $\{n \leq X : n \equiv a \pmod{q}\}$. 
\end{cor}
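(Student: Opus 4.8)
The plan is to obtain this as an immediate consequence of Theorem \ref{thm:Smooth}: one checks its hypotheses, applies it to get the asymptotic formula, and then reads off positivity from a crude lower bound for the main term. There is no genuine difficulty here; the only point requiring a moment's care is converting the hypothesis $X \geq q^{261/196+\e}$ into the exact form demanded by Theorem \ref{thm:Smooth}.

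First I would translate the hypothesis. Taking logarithms in $X \geq q^{261/196+\e}$ gives $\log q \leq \frac{196}{261+196\e}\log X$, and since
$$
\frac{196}{261+196\e} = \frac{196}{261} - \frac{196^2\e}{261(261+196\e)},
$$
we may set $\e' := \frac{196^2\e}{261(261+196\e)} > 0$ and conclude $q \leq X^{196/261-\e'}$, with $\e'$ small whenever $\e$ is. By hypothesis $q$ is squarefree and $X^{\eta}$-smooth with $0 < \eta < 1/522$, and $X$ is large in terms of $\eta$ and $\e$ (hence in terms of $\eta$ and $\e'$), so Theorem \ref{thm:Smooth} applies with $\e'$ in place of $\e$. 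Since $a$ is coprime to $q$ we have $(a,q) = 1 \leq X^{\e'}$ and $q/(q,a) = q$, so there is $\delta = \delta(\eta,\e') > 0$ with
$$
\sum_{n \leq X \atop n \equiv a \pmod{q}} \mu^2(n) = \frac{1}{\phi(q)}\sum_{n \leq X \atop (n,q) = 1}\mu^2(n) + O_{\e'}\!\left(\frac{X^{1-\delta}}{q}\right).
$$

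It remains to bound the main term below. The sum $\sum_{n \leq X,\,(n,q)=1}\mu^2(n)$ is at least the number of primes in $(q,X]$, each of which is squarefree and, being larger than $q$, coprime to $q$; since $q \leq X^{196/261} = o(X/\log X)$, the prime number theorem gives $\sum_{n \leq X,\,(n,q)=1}\mu^2(n) \geq \pi(X) - \pi(q) \geq c\, X/\log X$ for an absolute constant $c > 0$ and $X$ large. (One could instead use $\sum_{n \leq X,\,(n,q)=1}\mu^2(n) = \tfrac{6}{\pi^2}X\prod_{p\mid q}\tfrac{p}{p+1} + O(2^{\omega(q)}\sqrt{X}) \gg X/\log\log X$, but the weaker bound suffices.) Combining this with the displayed asymptotic and $\phi(q) \leq q$,
$$
\sum_{n \leq X \atop n \equiv a \pmod{q}} \mu^2(n) \;\geq\; \frac{c\, X}{\phi(q)\log X} - O_{\e'}\!\left(\frac{X^{1-\delta}}{q}\right) \;\geq\; \frac{X}{q}\left(\frac{c}{\log X} - O_{\e'}(X^{-\delta})\right).
$$
Since $X^{-\delta}\log X \to 0$ as $X \to \infty$, the right-hand side is strictly positive once $X$ is large enough in terms of $\eta$ and $\e$; hence $\{n \leq X : n \equiv a \pmod{q}\}$ contains a squarefree integer — indeed $\gg_{\e} X/(q\log X)$ of them. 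This completes the argument, the only non-mechanical input being Theorem \ref{thm:Smooth} itself.
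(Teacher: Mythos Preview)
Your proof is correct and takes essentially the approach the paper intends: the paper simply labels this a ``trivial corollary'' of Theorem~\ref{thm:Smooth} and gives no explicit argument, so your derivation---converting $X \geq q^{261/196+\e}$ into $q \leq X^{196/261-\e'}$, invoking the theorem, and comparing the main term to the error---is exactly what is implicitly being asked of the reader.
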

It is desirable to remove the assumption that $q$ is squarefree, so that the above results continue to hold in some form for $q$ merely $X^{\eta}$-smooth. While we are not able to derive this conclusion for all such moduli, we do show that almost all $X^{\eta}$-smooth moduli do have this property.
\begin{thm} \label{thm:notSfree}
There is a $\lambda > 0$ such that the following is true. Let $\eta > 0$, and set 
$$
\mc{Q}_{\lambda}(X) := \{q \leq X^{3/4+\lambda} : P^+(q) \leq X^{\eta}\}. 
$$
Then for all but $O_{\eta}(|\mc{Q}_{\lambda}(X)|/\log X)$ moduli $q \in \mc{Q}_{\lambda}(X)$, we have
$$
\max_{(a,q) = 1} \left|\sum_{n \leq X \atop n \equiv a \pmod{q}} \mu^2(n) - \frac{1}{\phi(q)}\sum_{n \leq X \atop (n,q) = 1} \mu^2(n)\right| = o_{\eta}(X/q).
$$
\end{thm}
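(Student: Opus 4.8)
The strategy is to reduce the non-squarefree case to the squarefree one already handled in Theorem \ref{thm:Smooth}, at the cost of discarding a negligible set of moduli. Write any $X^{\eta}$-smooth $q$ as $q = q_1 q_2^2 q_3$ where $q_1 q_2$ is squarefree, $q_2^2 q_3$ collects all prime powers $p^e \| q$ with $e \geq 2$, and $q_1$ is the squarefree part. The key point is that $\sum_{n \leq X, n \equiv a \pmod q} \mu^2(n)$ is supported on $n$ coprime to the "squarefull kernel" of $q$; more precisely, if $p^2 \mid q$ and we want $\mu^2(n) = 1$ with $n \equiv a \pmod{q}$, then we need $p \nmid n$, hence $p \nmid a$, and modulo $p^{e}$ the congruence $n \equiv a$ is equivalent to a congruence modulo $p$ together with the constraint $p \nmid n$. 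So the sum over $n \leq X$, $n \equiv a \pmod q$, $\mu^2(n) = 1$ can be rewritten, after removing the squarefull part of $q$, as a sum of the same shape to the squarefree, $X^{\eta}$-smooth modulus $q' := \rad(q)$, with a modified residue class, plus a sieve to reinstate the coprimality to $\rad(q/q')$. The main term transforms correctly under this reduction because $\phi(q/(q,a))$ factors multiplicatively.

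Concretely, I would first record a lemma of the following type: for $q$ squarefull-free apart from a controlled part, and for $(a,q)=1$,
\begin{align*}
\sum_{n \leq X \atop n \equiv a \pmod q} \mu^2(n) = \sum_{d \mid r^{\infty}} \mu(d) \sum_{m \leq X/d \atop dm \equiv a \pmod{q'}} \mu^2(m) + (\text{error}),
\end{align*}
where $r = \rad(q)/\rad_{\mathrm{sqfree-part}}$ encodes the primes appearing to power $\geq 2$, $q' \mid \rad(q)$ is squarefree and $X^{\eta}$-smooth, and the inner sums have moduli at most $\rad(q) \leq q$. The point is that $q' \leq q^{1/2} \cdot (\text{something})$ is typically much smaller than $q$ for a non-squarefree $q$, so whenever $q$ has its squarefree kernel below $X^{196/261-\e}$ — which one would need to arrange — Theorem \ref{thm:Smooth} applies to each inner sum. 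Summing the resulting asymptotics over $d \mid r^{\infty}$ (a rapidly convergent sum since $r$ has only smooth prime factors and the $d$-sum is dominated by its small terms), one recovers the predicted main term $\frac{1}{\phi(q/(q,a))}\sum_{n \leq X,\,(n,q)=1}\mu^2(n)$ with a power-saving error $O(X^{1-\delta'}/q)$.

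The remaining issue — and where the "all but $O(|\mc{Q}_\lambda(X)|/\log X)$" loss comes from — is that the reduction only works when $\rad(q) \leq X^{196/261 - \e}$, equivalently when the squarefull part of $q$ is not too small. For $q \in \mc{Q}_\lambda(X)$ with $q \leq X^{3/4+\lambda}$ and $\lambda$ a sufficiently small absolute constant, $3/4 + \lambda < 196/261$, so in fact $\rad(q) \leq q \leq X^{3/4+\lambda} < X^{196/261-\e}$ automatically once $\lambda < 196/261 - 3/4 = 1/1044$ and $\e$ is small; thus Theorem \ref{thm:Smooth} applies directly to the squarefree modulus $q' = \rad(q)$ with room to spare, and there is no exceptional set coming from the size constraint. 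The genuine exceptional set arises instead from controlling the error terms uniformly: the saving $\delta$ in Theorem \ref{thm:Smooth} depends on the modulus only through $\eta,\e$, so the error in each inner sum is $O(X^{1-\delta}/(X/d)) \cdot$ (number of $d$'s), and to ensure the total error is $o(X/q)$ rather than merely $O(X^{1-\delta}/q' )$ — which could be larger than $X/q$ when $q/q'$ is large — one needs $q/\rad(q)$, i.e. the squarefull part of $q$, to be at most a small power of $X$, say $\leq X^{\delta/2}$. The number of $X^{\eta}$-smooth $q \leq X^{3/4+\lambda}$ whose squarefull part exceeds $X^{\delta/2}$ is $O(|\mc{Q}_\lambda(X)| X^{-\delta/4})$, which is certainly $O(|\mc{Q}_\lambda(X)|/\log X)$, giving the exceptional set in the statement. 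The main obstacle is thus bookkeeping rather than new analytic input: one must verify that the $d \mid r^{\infty}$ expansion, the shifted residue classes $dm \equiv a \pmod{q'}$ with $(d m, q')$ possibly nontrivial, and the main-term arithmetic all combine to reproduce exactly $\frac{1}{\phi(q/(q,a))}\sum_{(n,q)=1}\mu^2(n)$, with every error term genuinely power-saving relative to $X/q$ for all $q$ outside the small exceptional family.
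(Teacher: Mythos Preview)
Your proposed reduction is based on a false premise. You claim that for $p^e \| q$ with $e \geq 2$ and $(a,q)=1$, the congruence $n \equiv a \pmod{p^e}$ together with $\mu^2(n)=1$ collapses to a congruence modulo $p$. This is not true: squarefreeness only forces $p^2 \nmid n$, but since $(a,p)=1$ we already have $p \nmid n$, and the condition $n \equiv a \pmod{p^e}$ still picks out one of the $p^{e-1}(p-1)$ coprime classes modulo $p^e$, not one of the $p-1$ classes modulo $p$. Concretely, the squarefree integers congruent to $1 \pmod 8$ have asymptotic density $1/8$ among the squarefree integers coprime to $2$, not $1/2$. So there is no identity of the shape you write, and the sum $\sum_{d\mid r^\infty}\mu(d)\sum_{dm\equiv a\pmod{q'}}\mu^2(m)$ does not recover the left-hand side. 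The entire reduction to Theorem~\ref{thm:Smooth} via $q' = \rad(q)$ therefore fails at the outset.

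The paper's argument is completely different and does not reduce to the squarefree case at all. Instead it reruns the full exponential-sum machinery of Section~\ref{sec:setupEst} directly for the modulus $q$, and the key new input is Proposition~\ref{prop:ThEst}~ii), which bounds the correlation sums $T(\mbf{h})$ when the auxiliary modulus $Q_0$ is merely coprime to $6$ rather than squarefree. This in turn relies on the prime-power analysis of Section~\ref{sec:corPrimePow} (Theorem~\ref{thm:expsumPrimPow}), where $K_2$ sums modulo $p^\nu$ with $\nu\geq 2$ are handled via the $p$-adic stationary phase method and Vinogradov-type Weyl sum bounds. The exceptional set in the theorem arises not from large squarefull part but from the requirement that $q$ be $X^\eta$-\emph{ultrasmooth} (every $p^\nu\|q$ satisfies $p^\nu\leq X^\eta$), which is needed so that $q$ can be factored into mutually coprime pieces of prescribed sizes in Lemma~\ref{lem:factSmooth}~b); the count of non-ultrasmooth smooth moduli is the $O_\eta(|\mc{Q}_\lambda(X)|/\log X)$ displayed just after the statement.
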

\noindent Given $y \geq 2$ we will say that a positive integer $q$ is \emph{$y$-ultrasmooth} if, whenever $p^n||q$ we have $p^n \leq y$. What we will actually show is that the conclusion of Theorem \ref{thm:notSfree} applies to all elements of
$$
\mc{Q}_{\lambda}'(X) := \{q \in \mc{Q}_{\lambda}(X) : q \text{ is $X^{\eta}$-ultrasmooth}\}.
$$
It is easy to see that by the union bound,
\begin{align*}
|\mc{Q}_{\lambda}(X) \bk \mc{Q}_{\lambda}'(X)| &\leq \sum_{p \leq X^{\eta}} |\{q \in \mc{Q}_{\lambda}(X) : p^{\nu}|| q, p^{\nu} > X^{\eta}\}| \leq \sum_{p \leq X^{\eta}} \sum_{q \leq X^{3/4+\lambda-\eta}} 1_{P^+(q) \leq X^{\eta}} \\
&\ll_{\eta} \frac{X^{\eta}}{\log X} |\mc{Q}_{\lambda}(X)|X^{-\eta} \ll \frac{|\mc{Q}_{\lambda}(X)|}{\log X}
\end{align*} 
which implies the claimed bound on the exceptional set when $X \geq X_0(\eta)$.
\begin{rem}
To be precise, there are two reasons why we do not treat all smooth moduli in this paper. The first is that our method does not directly treat moduli $q$ such that $q$ has a large power of 2 or 3 as a divisor. It is likely that it could be modified to treat this case, but at the cost of adding a non-trivial amount of pages to this already long paper. \\
A more serious issue arises from the fact that in the course of the proof we need to be able to factor our modulus $q$ (or more precisely, a suitably chosen divisor of it) into \emph{coprime} parts with well-controlled sizes (here we allow perturbations of size $X^{\eta}$ only). If $q$ were merely $X^{\eta}$-smooth but not $X^{\eta}$-ultrasmooth, e.g., if $q$ were divisible by a prime power $p^{\nu} > X^{1/100}$, say, with $p \leq X^{\eta}$, then this prime power would have to arise in one of the factors, biasing its size in a manner that might be incompatible with the bounds we obtain. The $X^{\eta}$-ultrasmoothness condition is in place to prevent such a bias in size from occurring.
\end{rem}

\begin{rem}
It is worth noting that our proof actually shows that we may asymptotically estimate the number of squarefree integers in a progression modulo $q$ when $q \leq X^{3/4-\e}$ which is $X^{\eta}$-smooth, without further assumptions (see Section \ref{subsec:3of4} for a proof). However, this by itself is not sufficient to improve upon Hooley's positive density result.
\end{rem}

\subsection{Proof Strategy}
To prove Theorems \ref{thm:Smooth} and \ref{thm:notSfree}, we will use a method of Heath-Brown that was used by Irving \cite{Irv} to study the distribution of the divisor function in arithmetic progressions. To motivate our argument, it is useful to see where obstructions occur in the classical treatment of counting squarefree integers in progressions, as found in \cite{Prach}. \\
Assume in what follows that $(a,q) = 1$, for convenience. Using the classical identity 
$$
\mu^2(n) = \sum_{kl^2 = n} \mu(l),
$$ 
and decoupling the parameters $k$ and $l$ by localizing them in short intervals (as we do in Section 2) it can be shown that the error term in \eqref{eq:asymp} is controlled by averaged incomplete exponential sums\footnote{Henceforth, given $x \in \mb{R}$ and $q \in \mb{N}$ we shall write $e(x) := e^{2\pi i x}$ and $e_q(x) := e(x/q)$.}
\begin{align} \label{eq:motiv}
\asum_{k \pmod{q}} \frac{1}{k} \left|\sum_{d \in I \atop (d,q) =1} e_q\left(ka\bar{d}^2\right)\right|,
\end{align}
where $I$ is some interval of size $< q$.  Using the completion method, one is led to bound averages of ``quadratic Kloosterman sums'' 
$$
K_2(A,B;q) := \asum_{x \pmod{q}} e_q\left(A\bar{x}^2 + Bx\right),
$$
where $A \in (\mb{Z}/q\mb{Z})^{\times}$ and $B \in \mb{Z}/q\mb{Z}$. Using the Chinese remainder theorem, we may of course factor $K_2$ as a product of complete sums modulo prime powers $p^n||q$. Each such factor can be estimated pointwise by $O(p^{n/2})$, either using: 
\begin{enumerate}[(i)]
\item a trivial application of the Bombieri--Dwork--Weil bound (see Lemma \ref{lem:WeilK2}) when $n = 1$, or 
\item the $p$-adic method of stationary phase for $n \geq 2$ (see Lemma \ref{lem:expK2}). 
\end{enumerate}
The resulting bound $O_{\e}(X^{\e}q^{1/2})$ for the complete sum modulo $q$ is of size $o(X/q)$ as required, as long as $q \leq X^{2/3-\e}$. \\
One way to go beyond the $X^{2/3}$ barrier is to try to exploit the averaging in $k$ in \eqref{eq:motiv}, rather than employing a pointwise bound. Indeed, when $q$ factors sufficiently nicely, the $q$-analogue of the van der Corput method of exponential sums, developed by Ringrose \cite{Ringf} and Heath-Brown \cite[Theorem 2]{HB}, enables one to reduce the problem above to estimating correlations of complete exponential sums (to be discussed momentarily). Using (a variant of) a result of Fouvry--Ganguly--Kowalski--Michel \cite{FGKM14} to estimate such correlations, Irving is able to treat $X^{\e}$-smooth and squarefree moduli of size $q \leq X^{2/3+1/246-\e}$. Aside from the fact that the sums $K_2$ entering the picture differ in behaviour from genuine Kloosterman sums (so that the results of \cite{FGKM14} do not apply), this strategy by itself is insufficient in the case of squarefree integers, even falling short of Nunes' result (see Subsection \ref{subsubsec:BdsAvg} for further details). \\
To do better, we incorporate an additional idea. The key difficulty in understanding the distribution of squarefree integers in progressions is to estimate the count of points on the curve $xy^2 \equiv a \pmod{q}$, for $x$ and $y$ lying in certain ranges. Clearly, if $I,J \subset \mb{Z}/q\mb{Z}$ are intervals and $q'$ is any divisor of $q$ then
$$
|\{(x,y) \in I \times J : xy^2 \equiv a \pmod{q}\}| \leq |\{(x,y) \in I \times J : xy^2 \equiv a \pmod{q'}\}|;
$$
moreover, observe that the analogous bound for \eqref{eq:motiv} with $q'$ (of suitable size) in place of $q$, i.e., $O(X^{\e}(q')^{1/2})$, can be of size $o(X/q)$ for larger choices of $q$ than $X^{2/3-\e}$. There is therefore an advantage in working with exponential sums modulo a suitably sized divisor of $q$, whenever such a divisor can be found; this was observed as well in \cite{Hoo75}, but its implementation differs from what we do here. As we shall show below, the fact that $q$ is $X^{\eta}$-smooth means we can find divisors of $q$ of any prescribed size (up to factors of size $X^{\eta}$), in particular the size required to use the trick above. We therefore end up applying Irving's method to treat Kloosterman-type sums modulo $q'$ instead, which when combined with the analysis of correlations described above results in a gain not over the range $q \leq X^{2/3-\e}$, but over a larger range of $q$ instead.\\
Let us describe more precisely a key feature of our argument. A crucial step in Irving's method (again for the divisor function) involves giving non-trivial estimates for the correlations 
$$
\sum_{b \in I} \text{Kl}(a,b+h_1;p)\cdots \text{Kl}(a,b+h_k;p),
$$
where $p \mid q$, $h_1,\ldots,h_k \in \mb{F}_p$, $a \in \mb{F}_p^{\times}$, $I \subset \mb{F}_p$ is an interval and
$$
\text{Kl}(A,B;p) := \sum_{x \in \mb{F}_p^{\times}} e_p\left(A\bar{x} + Bx\right) \text{ for $A,B \in \mb{F}_p$}
$$
is the classical Kloosterman sum modulo $p$. In our circumstances, we treat $K_2$ sums to both prime and prime power moduli, each case of which requires a separate analysis. \\
In the prime case, the corresponding correlation sum that we need to treat is of the form\footnote{We emphasize that unlike classical Kloosterman sums, the $K_2$ sums are not real-valued in general.}
$$
\sum_{b \in I} K_2(a, b+h_1;p)\cdots K_2(a,b+h_k;p) \bar{K}_2(a,b+h'_1;p) \cdots \bar{K}_2(a,b+h'_l;p),
$$
where $k+l \geq 1$. We treat (completions of) such sums, which are the subject of Theorem \ref{thm:boundCorrCompl}, in Section \ref{sec:corPrime} of this paper, using cohomological methods, in particular the sheaf-theoretic Fourier transform of Deligne. \\
Specifically, we view $K_2$ as the $\ell$-adic Fourier transform of the trace function of an Artin--Schreier sheaf ($\ell \neq p$ being an auxiliary prime), which is itself a trace function, pointwise pure of weight 1. Treating its correlations amounts to identifying cases in which tensor products of the underlying Galois representations are, or are not, geometrically trivial, a task facilitated by the Goursat--Kolchin--Ribet criterion (see \cite{FKMSumProducts} for an array of example applications of this method).\\
In the prime power case, our work is simplified (in terms of the required theoretical preliminaries, but not in the amount of technical details) by the fact that these $K_2$ sums can be explicitly computed using the $p$-adic stationary phase method. For instance, when $n \geq 2$ and $p>3$ is prime we have (see Lemma \ref{lem:expK2})
$$
K_2(A,B;p^n) = p^{n/2} \left(\frac{3A}{p^n}\right) \e_{p,n} \sum_{y \pmod{p^{\llf n/2\rrf}} \atop y^3 \equiv \bar{2A}B \pmod{p^{\llf n/2\rrf}}} e_{p^n}(3Ay^2),
$$
for any $A \in (\mb{Z}/p^n\mb{Z})^{\times}$ and $B \in \mb{Z}/p^n\mb{Z}$, where $\e_{p,n} \in S^1$. The correlation problem then revolves around bounding exponential sums over variables from a variety determined by the polynomials $\{X^3-\bar{2a}(b+h_i)\}_i$ and $\{X^3-\bar{2a}(b+h_j')\}_j$, as $b$ varies. \\
An important work treating such correlations, with $K_2$ sums again replaced by Kloosterman sums, was undertaken by Ricotta and Royer \cite{RicRoy}. They used their estimates to establish distribution theorems for Kloosterman sums modulo $p^n$, as $p \ra \infty$. However, their method is efficient mainly when $n$ is fixed, as it relies on treating the correlation sum (via explicit formulae for Kloosterman sums to prime powers) as an exponential sum whose phase function turns out to be a polynomial modulo $p^n$ of degree essentially as large as $n$, and then applying the standard van der Corput--Weyl method. We also employ this strategy, but instead use Vinogradov's method (and the recent solution to the Vinogradov main conjecture, due to Bourgain--Demeter--Guth \cite{BDG}, and independently Wooley \cite{Woo}) in place of van der Corput's method, which leads to a stronger result when $n$ is of bounded size as $p$ grows. \\
In our circumstances we will also require a treatment that is effective for moduli $p^n$ with rather large values of $n$ and $p$ possibly fixed. Fortunately, very recent work of Mili\'{c}evi\'{c} and Zhang \cite{MilZhan} introduced a method that suits this situation, in which one iteratively applies the stationary phase method to recover from the correlation sum a sum over a ``generically trivial'' variety, up to small error (at least when $n$ is large enough), rather than using Weyl sum estimates. A combination of the arguments\footnote{Unfortunately, the bounds for prime power moduli $p^n$ with $n \geq 2$ are rather poorer than the bounds for prime moduli. This is due, in part, to the lack of rigid algebraic data. 
As a result of these weaker conclusions, we have chosen to leave the exponent $\delta$ in Theorem \ref{thm:notSfree}, which is necessarily weaker than what is obtainable in Theorem \ref{thm:Smooth}, inexplicit.} in each of these regimes will suit our needs. 
\section*{Acknowledgments}
The author is grateful to the anonymous referees for a careful reading of the paper, for several corrections to the content of Section \ref{sec:corPrime}, and for many recommendations that helped in improving the exposition.
The author warmly thanks Corentin Perret-Gentil for invaluable discussions and suggested references that were crucial to the proof of Theorem \ref{thm:boundCorrCompl},  as well as many helpful comments contributing to better readability. The author would also like to thank Aled Walker for a useful suggestion leading to an improvement to the arguments in the proof of Theorem \ref{thm:expsumPrimPow}. This paper began while the author was visiting the California Institute of Technology in February 2020. The author would like to thank Caltech for the excellent working conditions, as well as Maksym Radziwi\l\l \  for the invitation and for bringing the work \cite{Nun} to his attention. 

\section{Setting up the Key Estimate}\label{sec:setupEst}

\subsection{First reductions}
Let $\e,\eta > 0$ be sufficiently small, let $X$ be large relative to $\e$ and $\eta$, and let $X^{2/3-\e} \leq q \leq X^{9/10-\e}$ be $X^{\eta}$-smooth. Given $a$ a residue class modulo $q$, an arithmetic function $g : \mb{N} \ra \mb{C}$ and a set $E\subset \N$, define
$$
\Delta_{g}(E;q,a) := \sum_{n \in E \atop n \equiv a \pmod*{q}} g(n) - \frac{1}{\phi(q/(a,q))} \sum_{n \in E \atop (n,q) = (a,q)} g(n).
$$
We shall also use the shorthand $\Delta_g(X;q,a):=\Delta_g([1,X]\cap\N;q,a)$. For the remainder of this section we will assume that $(a,q) = 1$. \\
Take $g = \mu^2$, the indicator function for the squarefree integers.  Using the classical identity $\mu^2(n) = \sum_{md^2 = n} \mu(d)$ we obtain
$$
\Delta_{\mu^2}(X;q,a) = \sum_{d \leq \sqrt{X}\atop (d,q)=1} \mu(d) \Delta_1(X/d^2; q, a\bar{d}^2),
$$
where $\bar{d}$ denotes the residue class modulo $q$ with $d\bar{d} \equiv 1 \pmod{q}$. Let $\delta \in (0,1/20)$ and let 
$$
X^{\delta+\e} \leq V_0 \leq X^{1-\delta}/q
$$ 
be a parameter to be chosen. For $m\le Y$ and $1 \leq b \leq m$, note the trivial bound 
$$
\Delta_1(Y;m,b) = \left \lfloor \frac{Y-b}{m} \right\rfloor - \frac{1}{\phi(m)} \left(\frac{\phi(m)}{m} Y + O(\tau(m))\right) = O(1),
$$
whence it follows that
$$
\Delta_{\mu^2}(X;q,a) = \sum_{V_0 < d \leq \sqrt{X}\atop (d,q)=1} \mu(d)\Delta_1(X/d^2;q,a\bar{d}^2) + O\left(V_0\right).
$$
It will be convenient in what follows later to remove the coefficient $\mu(d)$. Decomposing dyadically the sum in $d$ and applying the triangle inequality, we find that there is a $V_0 < V \leq \sqrt{X}$ such that
\begin{align*}
  \Delta_{\mu^2}(X;q,a) &\ll (\log X) \left|\sum_{d \sim V\atop (d,q)=1} \mu(d)\Delta_1(X/d^2;q,a\bar{d}^2)\right| + V_0 \\
                        &\leq (\log X) \sum_{d \sim V\atop (d,q)=1} \left|\Delta_1(X/d^2;q,a\bar{d}^2)\right| + V_0.  
\end{align*}
Next, we further subdivide the range of $m \leq X/d^2$ into dyadic subintervals, leading to the existence of $U$ satisfying $UV^2 \leq X$, such that
$$
\Delta_{\mu^2}(X;q,a) \ll (\log X)^2 \sum_{d \sim V \atop (d,q) = 1} \left|\sum_{\substack{m \sim U \\ md^2 \leq X \\ m \equiv a \bar{d}^2 \pmod*{q}}} 1- \frac{1}{\phi(q)} \sum_{\substack{m \sim U \\ md^2 \leq X \\ (m,q) = 1}} 1 \right| + V_0.
$$
To remove the condition $md^2 \leq X$ we split $(U,2U]$ and $(V,2V]$ into subintervals of respective lengths $UV_0^{-1}$ and $VV_0^{-1}$, of which there are $\ll V_0^{2}$ in total. We thus find that there are $U_1 \in (U,2U]$, $V_1 \in (V,2V]$ with $U_1V_1^2 \leq 8X$ such that
$$
\Delta_{\mu^2}(X;q,a) \ll V_0^2 (\log X)^2 \sum_{d \in I(V_1) \atop (d,q) = 1} \left|\Delta_1(I(U_1); q,a\bar{d}^2)\right| + V_0 + \mc{E},
$$
where
\[I(U_1) = (U_1, U_1 + UV_0^{-1}],\text{ and}\quad I(V_1) = (V_1,V_1 + VV_0^{-1}]\]
and $\mc{E}$ counts the number of pairs $(m,d)$ such that: \\
\begin{enumerate}[(i)]
\item $md^2 > X$
\item $md^2 \equiv a \pmod{q}$ and 
\item $d \in I(V_1')$, $m \in I(U_1')$ with $U_1' \in (U,2U]$, $V_1' \in (V,2V]$ satisfying $U_1'(V_1')^2 \leq X$.
\end{enumerate}
We easily see that
\begin{align*}
  X < md^2 &\leq (U_1' + U'V_0^{-1})(V'_1 + V'V_0^{-1})^2\\
           &\leq U'_1(V'_1)^2 + U'V'_1V_0^{-1} + 3U'_1V'_1V'V_0^{-1} + 3U'V'V'_1V_0^{-2}\\
           &\leq X + O(XV_0^{-1}).
\end{align*}
As $X /V_0 \geq q^{1+\delta}$, by Shiu's theorem \cite{Shiu} the contribution from these pairs is
$$
\mc{E}\ll \sum_{X < n \leq X + O(X/V_0) \atop n \equiv a \pmod*{q}} \tau(n) + \frac{1}{\phi(q)} \sum_{X  < n \leq X + O(X/V_0) \atop (n,q) = 1} \tau(n) \ll_{\delta} \frac{X\log X}{qV_0}.
$$
We thus find that
\begin{equation}
  \label{eq:DeltamuDelta1}
  \Delta_{\mu^2}(X;q,a) \ll_{\delta} V_0^2(\log X)^2 \sum_{d \in I(V_1) \atop (d,q) = 1} \left|\Delta_1(I(U_1);q,a\bar{d}^2)\right| + V_0 + X(\log X)/(qV_0),
\end{equation}
again with $U_1V_1^2 \leq 8X$. Note that we may assume that $V_1 > X^{1-\delta-\e}/(qV_0)$, since otherwise we immediately obtain
\begin{align}\label{eq:assumpV1}
\Delta_{\mu^2}(X;q,a) \ll_{\e,\delta} V_0^2 X^{\e} |I(V_1)| + V_0 + X(\log X)/(qV_0) \ll X^{1-\delta}/q.
\end{align}
Let $\tilde{q}$ be a divisor of $q$ to be chosen later (the smoothness assumption on $q$ will be useful in this selection). Using the fact that when $U_1V_1^2 \leq 8X$,
$$
\sum_{d \in I(V_1)} \frac{1}{\phi(q')} \sum_{m \in I(U_1)} 1_{(m,q') = 1} = \frac{|I(V_1)|}{\phi(q')}\left(\frac{\phi(q')}{q'}|I(U_1)| + O(\tau(q'))\right) \ll \frac{X}{q'V_1V_0^2} + X^{\e}\frac{V_1}{q'V_0}
$$
for $q' \in \{q,\tilde{q}\}$, we obtain
\begin{align*}
\sum_{d \in I(V_1) \atop (d,q) = 1} \left|\Delta_1(I(U_1);q,a\bar{d}^2)\right| &\leq \sum_{d \in I(V_1) \atop (d,q) = 1} \left(\sum_{m \in I(U_1) \atop m \equiv a \bar{d}^2 \pmod*{q}} 1 + \frac{1}{\phi(q)} \sum_{m \in I(U_1) \atop (m,q) = 1} 1\right) \\
&\leq \sum_{d \in I(V_1) \atop (d,\tilde{q}) = 1} \left(\sum_{m \in I(U_1) \atop m \equiv a \bar{d}^2 \pmod*{\tilde{q}}} 1 + \frac{1}{\phi(\tilde{q})} \sum_{m \in I(U_1) \atop (m,\tilde{q}) = 1} 1\right) + O\left(\frac{X}{\tilde{q}V_0^2V_1} + X^{\e}\frac{V_1}{\tilde{q}V_0}\right) \\
&= \sum_{d \in I(V_1) \atop (d,\tilde{q}) = 1} \Delta_1( I(U_1);\tilde{q},a\bar{d}^2) + O\left(\frac{X}{\tilde{q}V_1V_0^2} + \frac{X^{\e}V_1}{V_0\tilde{q}}\right).
\end{align*}
Hence, from \eqref{eq:DeltamuDelta1} and $V_1 \ll X^{1/2}$,
\begin{multline*}
\Delta_{\mu^2}(X;q,a) \ll_{\e} V_0^2(\log X)^2 \left|\sum_{d \in I(V_1) \atop (d,\tilde{q}) = 1} \Delta_1(I(U_1);\tilde{q},a\bar{d}^2)\right|
+ V_0 + \frac{X^{1+\e}}{qV_0} + \frac{X^{1+\e}}{\tilde{q}V_1} +\frac{ V_0X^{1/2+\e}}{\tilde{q}}
\end{multline*}
for $\tilde{q}\mid q$, $X^{\delta + \e} \leq V_0 \leq X^{1-\delta}/q$ and $V_1 \geq \max\{V_0, X^{1-\delta-\e}/(qV_0)\}$. \\
Having decoupled the variables $d$ and $m$ and removed the weight $\mu(d)$, we now introduce additive characters into the fold. By orthogonality, we have
\begin{multline*}
\sum_{d \in I(V_1) \atop (d,\tilde{q}) = 1} \left(\sum_{m \in I(U_1) \atop m\equiv a \bar{d}^2 \pmod*{\tilde{q}}} 1- \frac{1}{\phi(\tilde{q})} \sum_{m \in I(U_1) \atop (m,\tilde{q}) = 1} 1\right)
= \frac{1}{\tilde{q}} \sum_{k \pmod*{\tilde{q}}} \left(\sum_{d \in I(V_1) \atop (d,\tilde{q}) = 1} e_{\tilde{q}}(-ka\bar{d}^2)\right) \left(\sum_{m \in I(U_1)} e_{\tilde q}(km)\right)\\
- \frac{1}{\phi(\tilde{q})} \sum_{d \in I(V_1) \atop (d,\tilde{q}) = 1} \sum_{m \in I(U_1) \atop (m,\tilde{q}) = 1} 1.
\end{multline*}
By the sieve, 
$$
\sum_{m \in I(U_1) \atop (m,\tilde{q}) = 1} 1 = \frac{\phi(\tilde{q})}{\tilde{q}}|I(U_1)| + O(\tau(\tilde{q})),
$$
the main term of which cancels the sum with $k = 0$ above, and so we obtain
\begin{align*}
  \Delta_{\mu^2}(X;q,a) &\ll_{\e} \frac{V_0^2(\log X)^2}{\tilde{q}}\sum_{k \pmod*{\tilde{q}} \atop k \neq 0}\left|\sum_{d \in I(V_1) \atop (d,\tilde{q}) = 1} e_{\tilde{q}}(-ka\bar{d}^2)\sum_{m \in I(U_1)} e_{\tilde{q}}(km)\right| \\
  &+V_0+ \frac{X^{1+\e}}{qV_0} + \frac{X^{1+\e}}{\tilde{q}V_1}+ \frac{V_0X^{1/2+\e}}{\tilde{q}} \\
                        &\ll_{\e} \frac{V_0^2(\log X)^2}{\tilde{q}}\sum_{1 \leq |k| \leq \tilde{q}/2}\left|\sum_{d \in I(V_1) \atop (d,\tilde{q}) = 1} e_{\tilde q}(ka\bar{d}^2)\right|\left|\sum_{m \in I(U_1)} e_{\tilde q}(-km)\right| \\
  &+ V_0\left(1 + \frac{X^{1/2+\e}}{\tilde{q}}\right)  + \frac{X^{1+\e}}{\tilde{q}V_1} + \frac{X^{1+\e}}{qV_0}.
\end{align*}
Applying the geometric series estimate
$$
\sum_{m \in I(U_1)} e_{\tilde q}(km) \ll \min\{|I(U_1)|, \|k/\tilde{q}\|^{-1}\} \ll \tilde{q}/k
$$
for $1 \leq k \leq \tilde{q}/2$, we thus obtain
$$
\Delta_{\mu^2}(X;q,a) \ll_{\e} V_0^2(\log X)^2 \sum_{1 \leq k \leq \tilde{q}/2}\frac{1}{k}\left|\sum_{d \in I(V_1) \atop (d,\tilde{q}) = 1} e_{\tilde q}(ka\bar{d}^2)\right| + \frac{X^{1+\e}}{\tilde{q}V_1} + \frac{X\log X}{qV_0} + V_0.
$$
\subsection{Bounding incomplete exponential sums on average}
Our main objective from here on is to get a suitable estimate on average over $k$ for
$$
S_{\tilde{q},a}(k;V_1) := \sum_{d \in I(V_1) \atop (d,\tilde{q}) = 1} e_{\tilde q}\left(ka\bar{d}^2\right).
$$
To simplify matters further, we separate the range of $k$ according to $(k,\tilde{q}) = f$, giving
\begin{align}
\sum_{1\leq k \leq \tilde{q}-1} \frac{|S_{\tilde{q},a}(k;V_1)|}{k} = \sum_{f|\tilde{q} \atop f < \tilde{q}} \frac{1}{f}\quad \asum_{k' \pmod*{\tilde{q}/f}} \frac{|S_{\tilde{q}/f,a}(k';V_1)|}{k'}. \label{eq:fsum}
\end{align}
Fix $f\mid\tilde{q}$ with $f < \tilde{q}$ for the moment, and put $q' := \tilde{q}/f$. Completing the sum, we obtain
\begin{align}
S_{q',a}(k';V_1) &= \asum_{l \pmod*{q'}} e_{q'}(k'a\bar{l}^2)  \sum_{d \in I(V_1) \atop d \equiv l \pmod*{q'}} 1 \nonumber\\
&= \frac{1}{q'}\sum_{r \pmod*{q'}}\quad \asum_{l \pmod*{q'}} e_{q'}(k'a\bar{l}^2 + rl)  \sum_{d \in I(V_1)} e_{q'}(-rd) \nonumber \\
&= \frac{1}{q'}\sum_{r=1}^{q'} e_{q'}(-rV_1) K_2(k'a,r;q') g_{q'}(r), \label{eq:withGq}
\end{align}
where for $Q \geq 2$ we have set
$$
K_2(A,B;Q) := \asum_{x \pmod*{Q}} e_Q\left(A\bar{x}^2 + Bx\right) \quad (A\in(\Z/Q\Z)^\times, B \in \Z/Q\Z),
$$
the complete exponential sum defined in the introduction,
as well as 
\begin{align} \label{eq:gqExp}
  g_{q'}(r) &:= e_{q'}(rV_1)\sum_{d \in I(V_1)} e_{q'}(-rd) = \sum_{1 \leq d \leq V_1/V_0} e_{q'}(-rd) \ll \min\{V_1/V_0,q'/r\}.
\end{align}
We would like to use partial summation to remove the weight $g_{q'}$ in \eqref{eq:withGq}; however, the long sum in $r$ will make this inefficient in the sequel if we do not split the interval into shorter segments. To this end, let $1 \leq K \leq q'-1$ be a parameter that we will choose later. We split
\begin{align*}
  S_{q',a}(k';V_1) = \frac{1}{q'} \sum_{1 \leq m \leq q'/K} \sum_{K(m-1) < r \leq Km} e_{q'}(-rV_1) K_2(k'a,r;q') g_{q'}(r).
\end{align*}
Given $1 \leq m \leq q'/K$, set
$$
\kappa(m; k'a, q') := \max_{1 \leq R \leq K} \left|\sum_{r = K(m-1)+1}^{K(m-1) + R} e_{q'}(-rV_1)K_2(k'a,r;q')\right|.
$$
Applying partial summation to estimate the derivative $g_{q'}'$ of $g_{q'}$, we get
$$
|g_{q'}(r+1)-g_{q'}(r)| = \left|\int_r^{r+1} g'_{q'}(t) dt\right| \leq \max_{r \leq t < r+1} |g'_{q'}(t)|\ll \frac{V_1}{q'V_0}\min\{V_1/V_0,q'/r\}.
$$ 
Combining this, \eqref{eq:gqExp} and partial summation once again, we obtain
\begin{align}
  S_{q',a}(k';V_1) &\ll \frac{1}{q'}\sum_{1 \leq m \leq q'/K} \kappa(m; k'a, q')\cdot K \max_{(m-1)K < r \leq mK} |g_{q'}(r+1)-g_{q'}(r)| \nonumber \\
                   &\ll \frac{V_1}{q'V_0}\sum_{1 \leq m \leq q'/K}\frac{\kappa(m; k'a, q')}{m} + K\left(\frac{V_1}{q'V_0}\right)^2|K_2(k'a,0;q')|. \label{eq:SqaBd}
\end{align}
We may control the terms in \eqref{eq:fsum} with large $f$ directly using a square-root cancelling bound for $K_2(A,B,Q)$, which we derive below.

\subsubsection{Point-wise bounds}

\begin{lem}[$p$-adic stationary phase method]\label{lem:expK2}
For any $n \geq 2$, $a \in (\mb{Z}/p^n\mb{Z})^{\times}$ and $b \in \mb{Z}/p^n\mb{Z}$, we have
\begin{align}\label{eq:expK2}
K_2(a,b;p^{n}) = \left(\frac{3a}{p^n}\right) \e_{p,n}p^{n/2} \asum_{ \substack{y \pmod{p^{\llf n/2\rrf}} \\ y^3 \equiv \bar{2a}b \pmod{p^{\llf n/2\rrf}}}} e_{p^{n}}(3ay^2),
\end{align}
where $\e_{p,n} = 1$ if $n$ is even and $\e_{p,n} = i^{(p-1)^2/4}$ if $n$ is odd. 
\end{lem}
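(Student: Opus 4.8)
The plan is to evaluate $K_2(a,b;p^n)$ by the $p$-adic stationary phase method, i.e., by a direct computation. Throughout we assume $p>3$, which is what is needed for $2$ and $3$ to be invertible modulo $p^n$ and for Hensel's lemma to apply to the cube map. Set $m:=\lceil n/2\rceil$ and $\ell:=\lfloor n/2\rfloor$, so $n=m+\ell$ and $2m\ge n$, with equality exactly when $n$ is even. First I would write each unit $x\bmod p^n$ uniquely as $x=u+p^mv$ with $u$ a unit modulo $p^m$ and $v$ modulo $p^\ell$. As $p^{2m}\equiv 0\pmod{p^n}$, the geometric series for $\bar x=\bar u(1+p^m\bar uv)^{-1}$ terminates after the linear term, and expanding the phase gives
\[
a\bar x^2+bx\equiv (a\bar u^2+bu)+p^mv\,(b-2a\bar u^3)\pmod{p^n}.
\]
Summing over $v$ by orthogonality (note $p^mvw\bmod p^n$ depends only on $vw\bmod p^\ell$) kills every $u$ with $b\not\equiv 2a\bar u^3\pmod{p^\ell}$ and leaves a factor $p^\ell$ otherwise:
\[
K_2(a,b;p^n)=p^\ell\sum_{\substack{u\bmod p^m,\ (u,p)=1\\ 2a\bar u^3\equiv b\pmod*{p^\ell}}}e_{p^n}(a\bar u^2+bu).
\]
Here the summand is well-defined on the constraint set although $u$ is pinned down only modulo $p^m$: replacing $u$ by $u+p^ms$ changes the phase by $p^ms(b-2a\bar u^3)+O(p^{2m})\equiv 0\pmod{p^n}$, using the constraint and $m+\ell=n$. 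If $p\mid b$ the constraint has no unit solutions and both sides vanish, so assume $p\nmid b$ and set $c:=\bar{2a}\,b$ (a unit); its cube roots, when they exist modulo $p$, are units, and by Hensel's lemma each cube root of $c$ modulo $p^k$ lifts uniquely to one modulo any higher power.

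For $n$ even we have $m=\ell=n/2$, and $u\mapsto\bar u$ identifies the constraint set with the set of cube roots $y$ of $c$ modulo $p^{n/2}$, each of which lifts uniquely to a cube root of $c$ modulo $p^n$. Replacing $u$ by $u^\ast:=\bar y$ (legitimate by the well-definedness above), and using $b\equiv 2ay^3\pmod{p^n}$ so that $bu^\ast\equiv 2ay^2\pmod{p^n}$, one gets $a\overline{u^\ast}^{2}+bu^\ast\equiv ay^2+2ay^2=3ay^2\pmod{p^n}$. Hence $K_2(a,b;p^n)=p^{n/2}\sum_y e_{p^n}(3ay^2)$, which is exactly the claimed formula, since $(\tfrac{3a}{p^n})=1=\e_{p,n}$ when $n$ is even and $y$ ranges over the cube roots modulo $p^{\lfloor n/2\rfloor}$.

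For $n$ odd we have $m=\ell+1$ and $n=2\ell+1$. Every solution $u\bmod p^{\ell+1}$ has $\bar u\equiv y\pmod{p^\ell}$ for a unique cube root $y$ of $c$ modulo $p^n$, so write $u=u^\ast+p^\ell t$ with $u^\ast:=\bar y$ and $t$ running over $\Z/p\Z$. A second Taylor expansion — now keeping the $p^{2\ell}$-term, since $2\ell<n$, and discarding $p^{3\ell}$ — gives, after the linear piece $p^\ell t\,(b-2a\overline{u^\ast}^{3})\equiv 0\pmod{p^n}$ vanishes,
\[
a\bar u^2+bu\equiv 3ay^2+3a\,p^{2\ell}y^4t^2\pmod{p^n}.
\]
Summing over $t\bmod p$ produces $e_{p^n}(3ay^2)\sum_{t\bmod p}e_p(3ay^4t^2)=e_{p^n}(3ay^2)\,(\tfrac{3a}{p})\,g_p$, where $(\tfrac{3ay^4}{p})=(\tfrac{3a}{p})$ and $g_p:=\sum_{t\bmod p}e_p(t^2)=i^{(p-1)^2/4}\sqrt p=\e_{p,n}\sqrt p$ is the quadratic Gauss sum modulo $p$. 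Therefore
\[
K_2(a,b;p^n)=p^\ell\,\e_{p,n}\sqrt p\,(\tfrac{3a}{p})\sum_y e_{p^n}(3ay^2)=p^{n/2}\,(\tfrac{3a}{p^n})\,\e_{p,n}\sum_y e_{p^n}(3ay^2),
\]
using $p^{\ell+1/2}=p^{n/2}$ and $(\tfrac{3a}{p^n})=(\tfrac{3a}{p})$ for odd $n$; re-indexing the $y$'s by their residues modulo $p^{\lfloor n/2\rfloor}$ finishes the proof.

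The step I expect to need the most care is the bookkeeping of moduli through the two Taylor expansions: confirming the summand is well-defined precisely to the precision dictated by the parity of $n$, tracking which lift of the cube root $y$ actually enters $e_{p^n}(3ay^2)$, and matching $g_p$ with the stated $\e_{p,n}$ while checking that $(\tfrac{3a}{p^n})$ degenerates to $1$ or $(\tfrac{3a}{p})$ according to parity. The parity split is intrinsic: it is exactly what decides whether the residual second-derivative Gauss sum is trivial ($n$ even) or contributes the genuine $p^{1/2}$-size factor ($n$ odd).
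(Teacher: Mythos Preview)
Your argument is correct and follows essentially the same route as the paper: both are the $p$-adic stationary phase computation for the phase $f(x)=a\bar x^{2}+bx$, split according to the parity of $n$. The paper invokes Lemmas~12.2--12.3 of Iwaniec--Kowalski for the general stationary phase reduction and then completes the square in the residual Gauss sum when $n$ is odd, whereas you carry out the same reduction by hand and, by working from the outset with the Hensel lift of the cube root, arrange for the linear term in the odd-$n$ inner sum to vanish identically, which mildly streamlines that step.
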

\begin{proof}
Applying Lemmas 12.2 and 12.3 of \cite{IK}, we have that if $A \in (\mb{Z}/p^n\mb{Z})^{\times}$ and $B \in \mb{Z}/p^n\mb{Z}$ then
\begin{align*}
K_2(A,B;p^n) = p^{n/2} \asum_{ \substack{y \pmod{p^{\llf n/2\rrf}} \\ y^3 \equiv \bar{2A}B \pmod{p^{\llf n/2\rrf}}}} e_{p^n}(Ay^2 + B\bar{y}) \theta_{p^n}(y;A,B),
\end{align*}
where for $y$ satisfying $y^3 \equiv \bar{2A}B \pmod{p^{\llf n/2\rrf}}$ we have set
$$
\theta_{p^n}(y;A,B) := \begin{cases} 1 &\text{ if $2 \mid n$} \\ p^{-1/2}\sum_{z \pmod{p}} e_p \left(3Az^2 + z\left((2Ay-B\bar{y}^2)/p^{(n-1)/2}\right)\right) &\text{ if $2 \nmid n$.} \end{cases}
$$
A key point here is that the set of critical points, i.e., solutions to $y^3 \equiv \bar{2A}B \pmod{p^{\llf n/2\rrf}}$ is invariant under translations by $p^{n-\llf n/2\rrf} \mb{Z}/p^n\mb{Z}$ (see e.g., \cite[Lemma 1]{MilZhan}); in particular, by choosing a lift of such critical points to solutions to $y^3 \equiv \bar{2A}B \pmod{p^n}$ via Hensel's lemma, we may rewrite 
$$
Ay^2 + B\bar{y} \equiv 3A y^2 \pmod{p^n}.
$$
When $n = 2m$, with $m \geq 1$, we simply have (with $A = a$ and $B = b$)
$$
K_2(a,b;p^{2m}) = p^{m} \asum_{ \substack{y \pmod{p^{m}} \\ y^3 \equiv \bar{2a}b \pmod{p^{m}}}} e_{p^{2m}}(3ay^2),
$$
which implies the claim in this case. On the other hand, completing the square and using the explicit computation of Gauss sums modulo $p$ when $n = 2m+1$, we get
\begin{align*}
\theta_{p^{2m+1}}(y;A,B) &= p^{-1/2} \sum_{z \pmod{p}} e_p\left(3A\left(z + \bar{6A}\frac{2Ay-B\bar{y}^2}{p^{m}}\right)^2 - \bar{12A} \frac{(2Ay-B\bar{y}^2)^2}{p^{2m}}\right) \\
&= \left(\frac{3A}{p}\right) i^{(p-1)^2/4} e_{p^{2m+1}}\left(-\bar{3}Ay^2(1-\bar{2A}B\bar{y}^3)^2\right).
\end{align*}
Again, using the $p^{m+1}\mb{Z}/p^{2m+1}\mb{Z}$-translation invariance of the solutions to $y^3 \equiv \bar{2A}B \pmod{p^m}$, at critical points the exponential here is simply 1, and we obtain (when $A = a$ and $B = b$)
$$
K_2(a,b;p^{2m+1})  = \left(\frac{3a}{p}\right) i^{(p-1)^2/4} p^{(2m+1)/2} \asum_{ \substack{y \pmod{p^{m}} \\ y^3 \equiv \bar{2a}b \pmod{p^{m}}}} e_{p^{2m+1}}(3ay^2).
$$
The claim is proved.
\end{proof}

\begin{lem}\label{lem:WeilK2}
  Let $Q \geq 2$. Then
  \[\max_{\substack{A\in(\Z/Q\mb{Z})^\times \\ B\in \Z/Q\mb{Z}}} |K_2(A,B;Q)| \ll_{\e} Q^{1/2+\e}.\]  
\end{lem}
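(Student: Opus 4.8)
The plan is to reduce to prime-power moduli by the Chinese Remainder Theorem and then treat $\nu=1$ and $\nu\geq2$ separately, extracting square-root cancellation with an \emph{absolute} implied constant in each case; the factor $Q^\e$ in the statement will then arise purely from the number of distinct prime factors of $Q$. Concretely, splitting the summation variable by CRT gives, for $Q=\prod_{p^\nu\|Q}p^\nu$,
$$
K_2(A,B;Q)=\prod_{p^\nu\|Q}K_2\!\left(A\,\overline{Q/p^\nu},\ B\,\overline{Q/p^\nu};\ p^\nu\right),
$$
where the bar denotes inversion modulo $p^\nu$; each first argument remains a unit because $A\in(\Z/Q\Z)^\times$. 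Hence it suffices to exhibit an absolute $C$ with $|K_2(A,B;p^\nu)|\leq C\,p^{\nu/2}$ for every prime power $p^\nu$ and all admissible $A,B$, for then $|K_2(A,B;Q)|\leq C^{\omega(Q)}Q^{1/2}\ll_\e Q^{1/2+\e}$ by the familiar bound $C^{\omega(Q)}\ll_\e Q^\e$.

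For $\nu=1$ I would view $K_2(A,B;p)=\sum_{x\in\F_p^\times}e_p(f(x))$ as the complete exponential sum of the rational function $f(X)=A/X^2+BX\in\F_p(X)$ (the point $x=0$ being a pole of $f$, hence automatically omitted). Since $A\neq0$, $f$ has a genuine pole and so is not of the exceptional Artin--Schreier shape $u^p-u+c$; Weil's bound for exponential sums of rational functions (e.g.\ \cite{IK}) then gives $|K_2(A,B;p)|\leq 3\sqrt p$. For $\nu\geq2$ and $p>3$, Lemma~\ref{lem:expK2} (applied with $a=A$, $b=B$) yields
$$
|K_2(A,B;p^\nu)|=p^{\nu/2}\Big|\asum_{\substack{y\bmod p^{\llf\nu/2\rrf}\\ y^3\equiv\overline{2A}B}}e_{p^\nu}(3Ay^2)\Big|\leq p^{\nu/2}\cdot\#\{y\bmod p^{\llf\nu/2\rrf}:\ y^3\equiv\overline{2A}B\}\leq 3\,p^{\nu/2},
$$
since a unit has at most $\gcd(3,p-1)\leq3$ cube roots modulo a prime power. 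The remaining cases $p\in\{2,3\}$ with $\nu\geq2$, which Lemma~\ref{lem:expK2} does not reach, I would dispatch by the Cochrane--Zheng estimate for complete exponential sums of a fixed-degree rational function to a prime-power modulus, which again yields $\ll p^{\nu/2}$ with an absolute constant; alternatively one reruns the $p$-adic stationary-phase argument of Lemma~\ref{lem:expK2} by hand, the characteristic-$2$ and $3$ degeneracies being handled after a single Hensel step (the sum over units modulo $2^\nu$ resp.\ $3^\nu$ either vanishes identically or collapses to $O(1)$ quadratic Gauss-type sums modulo $p^\nu$). Taking $C$ to be the largest of the constants obtained above and combining with the multiplicativity step completes the proof.

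The only mildly delicate point is the bookkeeping for $p\in\{2,3\}$ with $\nu\geq2$, since the proof of Lemma~\ref{lem:expK2} relies on $\overline2,\overline3,\overline6$ being units; one must either cite Cochrane--Zheng or redo the stationary-phase computation in those two characteristics directly. Everything else --- the CRT factorisation, the cube-root count, the non-exceptionality of $f$, and the passage $C^{\omega(Q)}\ll_\e Q^\e$ --- is entirely routine, so I anticipate no serious obstacle.
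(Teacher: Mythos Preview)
Your proposal is correct and follows essentially the same approach as the paper: CRT reduction to prime powers, the Weil/Bombieri--Dwork bound for $\nu=1$, the stationary-phase formula of Lemma~\ref{lem:expK2} together with the cube-root count for $\nu\geq 2$, and then $C^{\omega(Q)}\ll_\e Q^\e$. If anything you are slightly more careful than the paper, which applies Lemma~\ref{lem:expK2} for all $p^n\|Q$ with $n\geq 2$ without explicitly flagging that its formula (involving $\overline{2a}$ and $\left(\frac{3a}{p^n}\right)$) degenerates at $p\in\{2,3\}$; your remark that these two primes require a separate (routine) treatment is well taken.
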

\begin{proof}
We reduce to the case of prime power moduli using the Chinese remainder theorem. Indeed, if $Q = m_1m_2$ where $m_1$ and $m_2$ are coprime, then selecting $r_1,r_2 \in \mb{Z}$ such that $m_1r_1+m_2r_2 = 1$, we have
\begin{align} \label{eq:factK2}
K_2(A,B;Q) = K_2(r_2A,r_2B;m_1)K_2(r_1A,r_1B;m_2).
\end{align}
Applying this inductively over the prime power divisors of $Q$ and taking maxima over $A$ and $B$, we obtain
$$
\max_{\substack{A\in(\Z/Q\mb{Z})^\times \\ B\in \Z/Q\mb{Z}}} |K_2(A,B;Q)| \leq \prod_{p^n || Q} \max_{\substack{A\in(\mb{Z}/p^n\mb{Z})^\times \\ B\in \mb{Z}/p^n\mb{Z}}} |K_2(A,B;p^n)|.
$$
Now, observe that for each $p || Q$ prime and $A,B \in \mb{Z}/p\mb{Z}$ we can represent
$$
K_2(A,B;p) = \sum_{x \in \F_p^{\times}} e_p\left(A\bar{x}^2 + Bx\right) = \sum_{x \in \F_p^{\times}} e_p(f_{A,B}(x)),
$$
where $f_{A,B}(x) = A/x^2 + Bx\in\F_p(x)$. By the Bombieri--Dwork--Weil bound \cite[Theorem 6]{Bomb66} (see also \cite[Section 3.5]{DelEC}),
\begin{equation}
  \label{eq:DworkBombieriWeil}
  \max_{A\in\F_p^\times, B\in\F_p} |K_2(A,B;p)| \leq  C'\sqrt{p}.
\end{equation}
for $C'>0$ an absolute constant. On the other hand, if $p^n||Q$ with $n \geq 2$ then by Lemma \ref{lem:expK2} we have
$$
|K_2(A,B;p^n)| \leq p^{n/2}|\{y \pmod{p^{\llf n/2 \rrf}} : y^3 \equiv \bar{2A}B \pmod{p^{\llf n/2\rrf}}\}|.
$$
The solutions to $y^3 \equiv \bar{2A}B \pmod{p}$ lift uniquely to solutions modulo $p^{\llf n/2\rrf}$ by Hensel's lemma, so the cardinality in the previous line is $\leq 3$. In particular, 
$$
\max_{\substack{A \in (\mb{Z}/p^n\mb{Z})^{\times} \\ B \in \mb{Z}/p^n\mb{Z}}} |K_2(A,B;p^n)| \leq Cp^{n/2}
$$ for $C := \max\{C',3\}$. \\
We therefore conclude that
\[\max_{\substack{A\in(\Z/Q\Z)^\times \\ B\in \Z/Q\Z}} |K_2(A,B;Q)| \leq C^{\omega(Q)} Q^{1/2} \ll_{\e} Q^{1/2+\e}\]
as claimed.
\end{proof}

Combining Lemma \ref{lem:WeilK2} with \eqref{eq:withGq} and $V_1 \ll X^{1/2}$ gives
\begin{align*}
  S_{q',a}(k';V_1) &\ll_{\e} (\log q)\max_{r \pmod*{q'} \atop r\neq 0} |K_2(k'a,r;q')| + \frac{|g_{q'}(0)||K_2(k'a,0;q')|}{q'}
                   &\ll_{\e} (q')^{1/2+\e} + X^{\e}(X/q')^{1/2}V_0^{-1}.  
\end{align*}
Let $Z \geq 2$ be a parameter to be chosen later. Applying this with $q' = \tilde q/f$ with $f > Z$ in particular, it follows immediately that
\begin{align*}
  \sum_{f\mid q \atop f > Z} \frac{1}{f}\quad \sum_{1 \leq k \leq \tilde{q}/(2f)} \frac{|S_{\tilde{q}/f,a}(k';V_1)|}{k'} &\ll_{\e} \tilde{q}^{\e} \sum_{f\mid \tilde{q} \atop f > Z}\frac{1}{f} \left((\tilde{q}/f)^{1/2+\e} + X^{\e}(X/\tilde{q})^{1/2}f^{1/2}V_0^{-1}\right)\\
  &\ll X^{\e}\left(\tilde{q}^{1/2} Z^{-3/2} + Z^{-1/2}(X/\tilde{q})^{1/2}V_0^{-1}\right),
\end{align*}
which, in combination with \eqref{eq:SqaBd}, thus gives
\begin{align} 
\Delta_{\mu^2}(X;q,a) &\ll_{\e} X^{\e} \frac{V_0V_1}{\tilde{q}}\sum_{\substack{f\mid\tilde{q} \\ f \leq Z}}\quad \sum_{1\leq k' \leq \tilde{q}/(2f)} \sum_{m = 1}^{\tilde{q}/(fK)} \frac{\kappa(m; k'a,\tilde{q}/f)}{k'm} \label{eq:impWeil}\\
&+ V_0\left(1+X^{\e}\left(\frac{X}{Z\tilde{q}}\right)^{1/2}\right) + X^{\e}\left(\frac{V_0^2\tilde{q}^{1/2}}{Z^{3/2}}+ \frac{X}{qV_0} + \frac{X}{\tilde{q}V_1}+ K V_1^2 \left(\frac{Z}{\tilde{q}}\right)^{3/2}\right), \nonumber
\end{align}
provided $K \leq \tilde{q}/Z$.

\subsubsection{Bounds on average} \label{subsubsec:BdsAvg}
It turns out (see Section \ref{subsec:3of4}) that merely applying Lemma \ref{lem:WeilK2} directly to $K_2$ (after replacing $q$ by $\tilde{q}$ as we have done above) results in a power-saving upper bound for $\Delta_{\mu^2}(X;q,a)$ for any $q \leq X^{3/4-\e}$, provided $\tilde{q}$ can be chosen with appropriate size. This is independent of squarefreeness considerations, and is modeled after Hooley's idea in \cite{Hoo75}. \\
Ignoring the effect of the sum over divisors $f$ (which will have little influence in the sequel), in order to do better we will need to find a power savings in $X$ over the pointwise estimate from Lemma \ref{lem:WeilK2}, i.e.,
$$
\max_{\substack{1 \leq m \leq \tilde{q}/K-1}}\quad \max_{\substack{1 \leq k' \leq \tilde{q}-1}} |\kappa(m;k'a,\tilde{q})| \leq K \max_{A \in (\Z/\tilde{q}\Z)^{\times} \atop B \in \Z/\tilde{q}\Z} |K_2(A,B;\tilde{q})| \ll_{\e} K\tilde{q}^{1/2+\e},
$$
by utilizing the averaging of $K_2$ sums implicit in the definition of $\kappa(m;k'a,\tilde{q})$. To this end, we will apply the $q$-van der Corput method of Heath-Brown, as formulated by Irving in \cite{Irv}. The following is proved \emph{mutatis mutandis} by the arguments in \cite{Irv}.
\begin{prop}[{\cite[Lemma 4.3]{Irv}}] \label{prop:qvdC}
Let $K,L \geq 1$, and $M \in \Z$. Suppose $Q \geq 4$ factors as $Q = Q_0 \cdots Q_L$, with $Q_j \geq 2$ for each $0 \leq j \leq L$. Let $J$ be an interval with $|J| \leq K$, and set
$$
T(b,M) := \sum_{k \in J} e_Q(-Mk) K_2(b,k;Q),
$$
where $b$ is a coprime residue class modulo $Q$. If $K \geq \max\{Q_1,\ldots,Q_L\}$ then
$$
|T(b,M)| \ll_{\e,L} Q^{1/2+\e}K\left(\sum_{j = 1}^L \left(\frac{Q_{L-j+1}}{K}\right)^{2^{L-j}} + \frac{Q}{K^{(L+1)}Q_0^{2^{L-1}+1}} \sum_{1 \leq |h_1| \leq K/Q_1} \cdots \sum_{1 \leq |h_L| \leq K/Q_L} |T(\mbf{h})|\right)^{2^{-L}}
$$
where for each $\mbf{h} \in \Z^L$ with $1 \leq |h_j| \leq K/Q_j$ there is an interval $J(\mbf{h})$ of size $\leq K$ and $b'$ coprime to $q$ such that
$$
T(\mbf{h}) := \sum_{k \in J(\mbf{h})} \prod_{I \subseteq \{1,\ldots,L\}} \mc{C}^{|I|} K_2\left(b',k+\sum_{i \in I} Q_ih_i,Q_0\right),
$$
$\mc{C}(z) := \bar{z}$ being the complex conjugation map.
\end{prop}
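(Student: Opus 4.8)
The plan is to mirror the proof of \cite[Lemma 4.3]{Irv}. That argument is an $L$-fold iteration of the $q$-analogue of van der Corput's $A$-process, in the form introduced by Heath-Brown \cite[Theorem 2]{HB}, and its only inputs concerning the exponential sum involved are: (i) that the sum is multiplicative in the modulus --- for $K_2$ this is \eqref{eq:factK2} --- and (ii) a square-root-cancelling pointwise bound for its complete version to an arbitrary modulus --- for $K_2$ this is Lemma \ref{lem:WeilK2}. Since $K_2$ enjoys both with the same strength as the classical Kloosterman sum, Irving's argument transfers essentially unchanged. There is one cosmetic difference: unlike the Kloosterman sum, $K_2$ is not real-valued, so the complex conjugation operator $\mc C$ must be propagated through the differencing steps, and this is exactly the source of the factors $\mc C^{|I|}$ in $T(\mbf h)$. (Implicitly the factorization $Q = Q_0\cdots Q_L$ is taken into pairwise coprime parts, each a product of full prime-power components of $Q$; this makes the Chinese Remainder manipulations below legitimate, and is automatic once $Q$ is squarefree or, more generally, $X^{\eta}$-ultrasmooth --- which is the only case in which we shall invoke the proposition.)

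The basic tool is the elementary van der Corput--Weyl inequality: for a sequence $(c_k)$ supported on an interval of length $\leq N$ and any integer $1 \leq H \leq N$,
$$
\left|\sum_{k} c_k\right|^2 \ll \frac{N}{H}\left(\sum_{k}|c_k|^2 + \sum_{1\leq h\leq H}\left|\sum_{k} c_{k+h}\overline{c_k}\right|\right).
$$
One applies this $L$ times, at the $j$-th step to the sequence coming from the previous step and with the shift $h$ restricted to multiples of $Q_{L-j+1}$; the hypothesis $K \geq \max\{Q_1,\ldots,Q_L\}$ ensures that the relevant truncation parameter $\asymp K/Q_{L-j+1}$ is $\geq 1$. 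Starting from $c_k = e_Q(-Mk)K_2(b,k;Q)$ and differencing by $Q_L h_L$: the exponential weight becomes $k$-independent, since $e_Q(-Mk)\overline{e_Q(-M(k+Q_Lh_L))} = e_Q(MQ_Lh_L)$; and because $Q_L h_L \equiv 0 \pmod{Q_L}$, the multiplicativity \eqref{eq:factK2} lets one extract from $K_2(b,k;Q)\overline{K_2(b,k+Q_Lh_L;Q)}$ the contribution of the $Q_L$-component, which Lemma \ref{lem:WeilK2} controls, leaving a correlation of two $K_2$-sums to the reduced modulus $Q/Q_L$, over $k$ in an interval of length $\leq K$. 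Each further step doubles the number of $K_2$-factors, introduces a new shift variable $h_{L-j+1}$ with $1\leq|h_{L-j+1}|\leq K/Q_{L-j+1}$, and peels one more prime-power block off the modulus; the crucial point is that each new shift is $\equiv 0$ modulo the factor being peeled, so that the relevant Chinese Remainder components of the $2^{j}$-fold product form conjugate pairs and Lemma \ref{lem:WeilK2} is applicable to them. After $L$ steps the modulus has dropped to $Q_0$, the accumulated shifts are $\sum_{i\in I}Q_i h_i$ over $I\subseteq\{1,\ldots,L\}$, and one is left with precisely the correlation sums $T(\mbf h)$ of $2^L$ shifted $K_2$-sums modulo $Q_0$ (with conjugations dictated by $|I|$) from the statement.

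What remains is accounting. The ``diagonal'' term $\sum_k|c_k|^2$ produced at the $j$-th step, after the $L-j$ further Weyl-squarings it undergoes, contributes the error term $(Q_{L-j+1}/K)^{2^{L-j}}$ inside the bracket; the product of the truncation factors $N/H\asymp Q_{L-j+1}$ over all $L$ steps, combined with the powers of $Q_{L-j+1}^{1/2}$ extracted via Lemma \ref{lem:WeilK2}, collapses --- by a direct but lengthy computation, carried out in \cite{Irv} --- into the normalizing constant $\tfrac{Q}{K^{L+1}Q_0^{2^{L-1}+1}}$ in front of $\sum_{\mbf h}|T(\mbf h)|$, the whole bracket being raised to the power $2^{-L}$, one square root per differencing step. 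As a consistency check, feeding back the trivial bounds $|T(\mbf h)|\ll_{\e}K Q_0^{2^{L-1}+\e}$ recovers the trivial estimate $|T(b,M)|\ll_{\e}K Q^{1/2+\e}$, as it must.

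I expect the main difficulty to be entirely one of bookkeeping rather than of ideas. One must track, through the $L$ rounds, which twists of the parameter $b$ occur at each stage, how the $2^{j}$ factors pair up under the Chinese Remainder Theorem so that the quantities handled by Lemma \ref{lem:WeilK2} genuinely arise from a single modulus-component (this uses crucially that each shift is trivial modulo the factor being peeled), and the exact exponents in the assembled error and main terms. Since none of this relies on any property of $K_2$ beyond \eqref{eq:factK2} and Lemma \ref{lem:WeilK2} --- in particular not on real-valuedness --- the verification amounts to checking that inserting the conjugation operators $\mc C^{|I|}$ does not disturb Irving's computation, which it does not.
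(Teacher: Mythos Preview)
Your proposal is correct and takes essentially the same approach as the paper, which simply states that the result is proved \emph{mutatis mutandis} by the arguments in \cite{Irv}. Your sketch correctly identifies the two inputs that make the transfer work --- multiplicativity \eqref{eq:factK2} and the square-root bound of Lemma \ref{lem:WeilK2} --- together with the one cosmetic change (propagating the conjugation operator $\mc C$ because $K_2$ is not real-valued), which is precisely what the \emph{mutatis mutandis} amounts to.
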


Proposition \ref{prop:qvdC} indicates that our main point of focus for the remainder of the argument will be the estimation of $|T(\mbf{h})|$, for $\mbf{h}$ satisfying $Q_j|h_j| \in [1,K]$ for all $1 \leq j \leq L$. We will estimate these terms pointwise in $\mbf{h}$, the key point being that, outside of a sparse set of $\mbf{h}$, we will obtain significant cancellation. This will result in the following. 
\begin{prop} \label{prop:ThEst}
Adopt the notation of Proposition \ref{prop:qvdC}. \\
i) Assume that $Q = Q_0 \cdots Q_L$ is squarefree. Then
$$
\sum_{1 \leq |h_1| \leq K/Q_1} \cdots \sum_{1 \leq |h_L| \leq K/Q_L} |T(\mbf{h})| \ll_{\e,L}\frac{K^L}{Q} Q_0^{2^{L-1}+3/2+\e}\left(\frac{K}{Q_0} + 1\right). 
$$
ii) Then there is a $\delta' = \delta'(L) \in (0,2^{-2^L}]$ such that the following holds. Suppose $Q = Q_0\cdots Q_L$ is such that $(Q_i,Q_j) = 1$ for all $0 \leq i < j \leq L$, and $(Q_0,6) = 1$. Assume also that $K/Q_j > Q_0^{2\delta'}$ for all $p^{\nu} || Q_0$ and all $1 \leq j \leq L$. Then
$$
\sum_{1 \leq |h_1| \leq K/Q_1} \cdots \sum_{1 \leq |h_L| \leq K/Q_L} |T(\mbf{h})| \ll_{\e,L}\frac{K^L}{Q} Q_0^{2^{L-1}+2-\delta'+\e}.
$$
\end{prop}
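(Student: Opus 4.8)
The plan is to estimate $|T(\mbf{h})|$ pointwise, separating the treatment of the prime factors of $Q_0$ into the squarefree (prime) case and the higher prime power case, then summing over $\mbf{h}$.

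\textbf{Step 1: Factor $T(\mbf h)$ over prime powers via CRT.} Since $T(\mbf{h})$ is a sum over an interval $J(\mbf{h}) \subseteq \mb{Z}/Q_0\mb{Z}$ of a product of $K_2$-sums and conjugates, all to modulus $Q_0$, I first want to complete the sum over $k$: writing $J(\mbf h)$ as a linear combination of characters $e_{Q_0}(\cdot)$, and using \eqref{eq:gqExp}-type geometric series bounds to control the completion, I reduce to a \emph{complete} correlation sum
$$
\tilde T(\mbf h; r) := \asum_{k \bmod Q_0} e_{Q_0}(-rk) \prod_{I \subseteq \{1,\ldots,L\}} \mc{C}^{|I|} K_2\!\left(b', k + \textstyle\sum_{i \in I} Q_i h_i; Q_0\right),
$$
and I will want a bound $|\tilde T(\mbf h; r)| \ll_{\e,L} Q_0^{(\text{something})+\e}$ uniform in $r$. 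By the Chinese remainder theorem (using that the $Q_i$ are pairwise coprime, so the shifts $\sum_{i\in I} Q_i h_i$ reduce componentwise modulo each $p^\nu \| Q_0$), this complete sum factors as a product over $p^\nu \| Q_0$ of analogous complete correlation sums to modulus $p^\nu$.

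\textbf{Step 2: The prime case ($\nu = 1$), part (i).} Here I invoke Theorem \ref{thm:boundCorrCompl} (the cohomological correlation bound for $K_2$-sums to prime modulus, proved in Section \ref{sec:corPrime}). The point is that $\sum_{k \in \F_p} e_p(-rk)\prod_j K_2(b', k+h_j; p) \prod_j \bar K_2(b', k+h'_j; p)$ exhibits square-root cancellation, i.e.\ is $O_{k,l}(\sqrt p)$, \emph{unless} the shifts $\{h_j\} \cup \{h'_j\}$ collapse in a way that makes the relevant tensor product of sheaves geometrically trivial — which, by the Goursat--Kolchin--Ribet analysis, happens only on a ``diagonal'' sparse set of $\mbf h$. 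On that sparse set I use the trivial bound, each factor being $\ll \sqrt p$ and there being $2^L$ factors, so $|\tilde T(\mbf h; r)| \ll p^{2^{L-1} + 1/2}$; multiplying over $p \| Q_0$ (there are $\ll Q_0^{o(1)}$ such primes) gives $Q_0^{2^{L-1}+1/2+\e}$, and the completion from Step 1 costs a further factor $\ll Q_0^{1/2}$ plus the $(K/Q_0 + 1)$ term from the geometric series / length of $J(\mbf h)$. Counting the sparse (diagonal) $\mbf h$: for a typical coordinate the constraint forces $O(1)$ choices once the others are fixed, so the number of ``bad'' $\mbf h$ with $|h_j| \le K/Q_j$ is $\ll (K/Q)^{\,\text{(fewer than }L\text{)}}$, and keeping careful track of exactly which coordinates remain free yields the stated $\frac{K^L}{Q} Q_0^{2^{L-1}+3/2+\e}(K/Q_0 + 1)$; for the generic $\mbf h$ the square-root cancellation is strong enough that their total contribution is dominated by this.

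\textbf{Step 3: The prime power case ($\nu \ge 2$), part (ii).} Now I use the explicit evaluation of $K_2(A,B;p^\nu)$ from Lemma \ref{lem:expK2}: each factor becomes $p^{\nu/2}$ times a short exponential sum over the cube-root variety $y^3 \equiv \bar{2A}B \pmod{p^{\lfloor \nu/2\rfloor}}$. Substituting into $\tilde T(\mbf h; r)$ turns the correlation into an exponential sum in the cube-root variables attached to the shifted arguments $b' , k + \sum_{i\in I}Q_i h_i$, whose phase is a polynomial modulo $p^\nu$. This is exactly the situation handled by Theorem \ref{thm:expsumPrimPow} (our analogue of Ricotta--Royer / Mili\'cevi\'c--Zhang): for bounded $\nu$ one applies Vinogradov's method via the resolved Vinogradov main conjecture \cite{BDG,Woo}, and for large $\nu$ with $p$ possibly fixed one iterates the stationary phase method à la Mili\'cevi\'c--Zhang to recover a ``generically trivial'' variety up to small error. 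The upshot is a bound $|\tilde T(\mbf h; r)| \ll p^{\nu(2^{L-1}+1 - \delta') + \e}$ for $\mbf h$ outside a sparse set, with $\delta' = \delta'(L) \in (0, 2^{-2^L}]$; the hypothesis $K/Q_j > Q_0^{2\delta'}$ ensures the ranges of $h_i$ are long enough for the cube-root shifts not to all collide, which is what makes ``generic triviality'' applicable. Multiplying over $p^\nu \| Q_0$, completing as in Step 1, and summing over $\mbf h$ (the sparse set again contributes a lower power of $K/Q$) yields $\frac{K^L}{Q} Q_0^{2^{L-1}+2-\delta'+\e}$.

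\textbf{Main obstacle.} The genuinely hard inputs are Theorems \ref{thm:boundCorrCompl} and \ref{thm:expsumPrimPow} themselves — establishing that the only obstruction to square-root cancellation in these $K_2$-correlations is a sparse diagonal set of shifts. In the prime case this is the cohomological heart of the paper (Section \ref{sec:corPrime}): one must show the relevant tensor products of the Fourier-transformed Artin--Schreier sheaves are geometrically nontrivial away from the diagonal, which requires identifying the geometric monodromy group and ruling out unexpected isomorphisms among the sheaf pullbacks $\mathcal{F}_{h_i}$ — precisely the delicate point, since $K_2$ (unlike a genuine Kloosterman sheaf) has lower rank and different monodromy, so the cited results of \cite{FGKM14} do not transfer directly. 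In the prime power case the obstacle is bookkeeping the iterated stationary phase: ensuring that at each stage the ``stationary'' variety is cut out transversally, controlling the accumulated error across $\sim \nu$ iterations, and patching the small-$\nu$ (Vinogradov) and large-$\nu$ (Mili\'cevi\'c--Zhang) regimes into one uniform bound with an explicit, $L$-dependent but $\nu$-uniform $\delta'$. Once those two theorems are in hand, the proof of Proposition \ref{prop:ThEst} is the relatively routine combinatorial exercise of counting diagonal $\mbf h$ and assembling the prime-power contributions via CRT.
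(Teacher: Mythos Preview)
Your high-level strategy matches the paper's: complete via Lemma~\ref{lem:toPrime}, factor over prime powers by CRT, apply Theorems~\ref{thm:boundCorrCompl} and~\ref{thm:expsumPrimPow} at each prime power, then sum over $\mbf h$. But several of the details you wrote are wrong, and one missing ingredient is genuinely what makes the argument go through.

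First, the exponent bookkeeping in Step~2 is off. On the ``bad'' set the trivial bound for the complete correlation at a prime $p$ is $p\cdot p^{2^{L-1}}=p^{2^{L-1}+1}$, not $p^{2^{L-1}+1/2}$ (you forgot the sum over $k$ contributes a full factor $p$). And completion never costs $Q_0^{1/2}$: Lemma~\ref{lem:toPrime} introduces the weight $\min\{|I|/Q_0, (Q_0\|C/Q_0\|)^{-1}\}$, which sums over $C$ to $\ll\log Q_0$. The extra $Q_0$ in the final exponent $2^{L-1}+3/2$ comes instead from $Q_1\cdots Q_L=Q/Q_0$ when you convert $\prod_j (K/Q_j)$ to $K^LQ_0/Q$.

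More seriously, your description of the bad set as ``diagonal'' with ``$O(1)$ choices once the others are fixed'' is not what happens. The paper's key combinatorial input is Lemma~\ref{lem:combo}: if at a prime $p$ the shifts satisfy $\mu_{\mbf h}(\tau)\equiv\nu_{\mbf h}(\tau)\pmod 3$ for all $\tau$ (the obstruction from Theorem~\ref{thm:boundCorrCompl}), then necessarily $p\mid\prod_i h_i$. This is proved by a short but nontrivial cyclotomic-norm argument (expanding $\prod_i(1-e_p(Q_ih_i))$ two ways and using that the norm of an algebraic integer is a rational integer). The upshot is not that bad $\mbf h$ are rare per se, but that the per-prime bound can be written uniformly as $M_p(C,\mbf h)\ll p^{2^{L-1}+1/2}\min\{(p,C)^{1/2},(p,\prod_i h_i)^{1/2}\}$; the paper then sums the GCD weight $(Q_0,\prod_i h_i)^{1/2}\le\prod_i(Q_0,h_i)^{1/2}$ over $\mbf h$ directly, which costs only $Q_0^\e$. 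Without Lemma~\ref{lem:combo} you cannot link the sheaf-theoretic degeneracy condition back to an arithmetic condition on $\mbf h$ that is summable.

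For part~(ii) the situation is similar but more intricate: the paper splits $Q_0=\mf q_1\mf q_2\mf q_3\mf q_4\mf q_5$ according to whether $p\lessgtr Y(L)$ and $\nu\lessgtr Z(L)$, applies the appropriate case of Theorem~\ref{thm:expsumPrimPow} at each prime power, and then needs Lemma~\ref{lem:ctrlHI} (a lattice-point count) to bound the number of $\mbf h$ for which the $p$-adic spacing hypothesis~\eqref{eq:pAdAVCond} fails. This is where the assumption $K/Q_j>Q_0^{2\delta'}$ is actually used. Your Step~3 gestures at this but does not identify either the five-fold decomposition or the lattice-counting lemma, both of which are needed to assemble the pieces into a single $\delta'$.
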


Proposition \ref{prop:ThEst} will be proved in the next two sections. As a first step, we shall replace $T(\mbf{h})$ by analogous complete sums \eqref{eq:T2} modulo prime powers with an additional additive phase using the following.
\begin{lem} \label{lem:toPrime}
  Let $I \subset \mb{Z}/Q_0 \mb{Z}$ be an interval, and let $A_0 \in (\mb{Z}/Q_0\mb{Z})^{\times}$, $B_0\in\Z/Q_0\Z$. Let $N,M \geq 0$ with $N+M \geq 1$, and let $\mbf h\in\Z^N$, $\mbf h'\in\Z^M$. Write $Q_0 = \prod_{1 \leq j \leq k} p_j^{\alpha_j}$ for distinct primes $p_j$ and $k = \omega(Q_0)$.
  Then there exist $\mbf{A},\mbf{B}\in\prod_{1\le j\le k} \mb{Z}/p_j^{\alpha_j}\mb{Z}$ such that
\begin{align*}
&\sum_{B \in I} \prod_{1 \leq i \leq N} K_2(A_0,B_0+h_i;Q_0) \prod_{1 \leq j \leq M} \bar{K}_2(A,B+h_j';Q_0) \\
&\ll \sum_{C \pmod*{Q_0}} \min\left\{\frac{|I|}{Q_0},\frac{1}{Q_0\|C/Q_0\|}\right\} \prod_{j= 1}^{k} \left|T(A_j,B_j,C,\mbf h,\mbf h';p_j^{\alpha_j})\right|,
\end{align*}
where for each $Q\mid Q_0$ we set
\begin{multline}
  \label{eq:T2}
  T(A,B,C, \mbf h,\mbf h';Q):=\sum_{b \pmod*{Q}} e_{Q}\left(CBb\right)\\
   \cdot \prod_{1 \leq i \leq N}K_2(A,b+h_{i};Q) \prod_{1 \leq j\leq M} \bar{K}_2(A,b+h'_{j};Q).
\end{multline}
\end{lem}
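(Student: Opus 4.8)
The plan is to first extract the $b$-sum over the interval $I$ by completion, then factor everything in sight via the Chinese remainder theorem. First I would write, by orthogonality of additive characters modulo $Q_0$,
\[
\sum_{B \in I} F(B) = \frac{1}{Q_0}\sum_{C \pmod*{Q_0}} \widehat{1_I}(C) \sum_{b \pmod*{Q_0}} e_{Q_0}(Cb) F(b),
\]
where $F(B) := \prod_{i} K_2(A_0,B_0+h_i;Q_0)\prod_j \bar K_2(A,B+h_j';Q_0)$ and $\widehat{1_I}(C) = \sum_{b\in I} e_{Q_0}(-Cb)$ satisfies the standard geometric-series bound $|\widehat{1_I}(C)| \ll \min\{|I|, \|C/Q_0\|^{-1}\}$, which after dividing by $Q_0$ produces exactly the weight $\min\{|I|/Q_0, (Q_0\|C/Q_0\|)^{-1}\}$ in the statement. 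Note that the first product $\prod_i K_2(A_0,B_0+h_i;Q_0)$ does not depend on the summation variable $b$, so it is a constant of modulus $\prod_i |K_2(A_0,B_0+h_i;Q_0)| \le \prod_i C'\sqrt{Q_0}$ by Lemma \ref{lem:WeilK2} — actually, since we only want a $\ll$ bound and the right-hand side of the claim carries the full $T$-products, I should be slightly careful: the cleanest route is to absorb this $b$-independent factor and the additive phase $e_{Q_0}(Cb)$ into the definition of $T$ by a change of variables, so that what remains is genuinely of the form $\sum_b e_{Q_0}(C'b)\prod K_2 \prod \bar K_2$ with a possibly shifted $C'$; one checks the shift does not affect the min-weight since it only permutes the $C$-sum.

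Next I would apply CRT. Writing $Q_0 = \prod_{j=1}^k p_j^{\alpha_j}$ and using the multiplicativity relation \eqref{eq:factK2} for $K_2$ (which holds for each of the factors $K_2(A,b+h;Q_0)$ and, by conjugation, for the $\bar K_2$ factors), together with the factorization $e_{Q_0}(Cb) = \prod_j e_{p_j^{\alpha_j}}(C^{(j)} b^{(j)})$ under $b \leftrightarrow (b^{(j)})_j$ and $C \leftrightarrow (C^{(j)})_j$, the complete sum $\sum_{b \pmod{Q_0}} e_{Q_0}(Cb)F(b)$ factors as $\prod_{j=1}^k T(A_j,B_j,C^{(j)},\mbf h,\mbf h';p_j^{\alpha_j})$ for appropriate residues $A_j,B_j$ obtained from $A,B_0$ and the CRT coefficients $r_j$ as in \eqref{eq:factK2} (this is where the vectors $\mbf A,\mbf B \in \prod_j \mb{Z}/p_j^{\alpha_j}\mb{Z}$ of the statement come from — they record the twisted arguments $r_{\hat j}A$, $r_{\hat j}B_0$ etc.). Summing the triangle inequality over $C$ and noting that the reindexing $C \leftrightarrow (C^{(j)})_j$ is a bijection, and that $\|C/Q_0\|$ governs the weight while the product of moduli is $Q_0$, gives exactly the displayed bound after replacing $\widehat{1_I}(C)/Q_0$ by its absolute value $\min\{|I|/Q_0,(Q_0\|C/Q_0\|)^{-1}\}$ and pulling it outside the product of $|T|$'s.

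The main obstacle, such as it is, is bookkeeping rather than mathematics: correctly tracking how the fixed arguments $A_0, B_0$ and the shifts $h_i$ transform under CRT so that each local factor genuinely has the shape \eqref{eq:T2} with a \emph{single} first argument $A$ appearing in all $2(N+M)$-ish Kloosterman factors, and verifying that the $b$-independent product $\prod_i K_2(A_0,B_0+h_i;Q_0)$ can be legitimately discarded (it can, since its absolute value is at most $O(Q_0^{N/2+\e})$, but one wants to make sure this does not secretly need to appear on the right — re-reading the statement, the right-hand side only involves $T$'s built from \emph{both} the $h_i$ and $h_j'$, so in fact the intended reading is that the $\mbf A,\mbf B$ encode the first product's contribution as part of the local $T(A_j,B_j,C,\mbf h,\mbf h';p_j^{\alpha_j})$ factors themselves; I would make this explicit by not separating the two products at all and instead treating $\prod_i K_2(A_0, \cdot) \prod_j \bar K_2(A,\cdot)$ as one object, factoring it wholesale via CRT). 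Everything else — the geometric series bound, the change of variables absorbing the phase, the bijectivity of the CRT reindexing — is routine.
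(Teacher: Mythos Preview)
Your approach is correct and is essentially the paper's: complete the sum over $B$ modulo $Q_0$, bound the geometric series to get the $\min\{|I|/Q_0,(Q_0\|C/Q_0\|)^{-1}\}$ weight, then factor the resulting complete sum $T(A,B,C,\mbf h,\mbf h';Q_0)$ over the prime powers dividing $Q_0$ via the Chinese remainder theorem and \eqref{eq:factK2}. The paper carries out the CRT step by induction on $\omega(Q_0)$, writing $Q_0=Q_0'p^\alpha$, decomposing $B=uQ_0'+vp^\alpha$, and making the change of variables that produces the twisted arguments $A_j,B_j$; your direct application of CRT amounts to the same thing.

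The detour in your write-up --- worrying that $\prod_i K_2(A_0,B_0+h_i;Q_0)$ is a constant to be ``discarded'' --- is caused by typos in the statement, not by any real difficulty. As the paper's own proof makes explicit, the intended left-hand side is $\sum_{B\in I} g(B)$ with
\[
g(B)=\prod_{1\le i\le N} K_2(A,B+h_i;Q_0)\prod_{1\le j\le M}\bar K_2(A,B+h_j';Q_0),
\]
so both products depend on the summation variable $B$ and there is nothing to absorb or shift. Once you read the statement this way (as you eventually do), all of your concerns evaporate and the proof is pure bookkeeping.
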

\begin{proof}
For $B \in \mb{Z}/Q_0\mb{Z}$ put
$$
g(B) := \prod_{1 \leq i \leq N} K_2(A,B+h_i;Q_0) \prod_{1 \leq j \leq M} \bar{K_2}(A,B+h_j';Q_0).
$$
Completing the sum over $B$ modulo $Q_0$, the left-hand side is
\begin{align*}
&\sum_{B \pmod*{Q_0}} 1_I(B) g(B) = \frac{1}{Q_0} \sum_{C \pmod{Q_0}} \left(\sum_{D \in I} e_{Q_0}(-CD)\right) \left(\sum_{B \pmod*{Q_0}} g(B)e_{Q_0}(CB)\right) \\
&\ll \frac{1}{Q_0}\sum_{C \pmod{Q_0}} \min\{|I|,\|C/Q_0\|^{-1}\} \left|T(A,B,C,\mbf h,\mbf h';Q_0)\right|.
\end{align*}
It thus suffices to show the existence of $A_j,B_j \pmod{p_j^{\alpha_j}}$ for each $1 \leq j \leq k$, such that
\[T(A,B,C,\mbf h,\mbf h';Q_0)=\prod_{1\le j\le k}T(A_j,B_j,C,\mbf h,\mbf h';p_j^{\alpha_j}).\]
We prove this by induction on $k$, the number of distinct prime factors of $Q_0$. When $k = 1$ there is nothing to prove. Assume this works for any $Q_0$ having $k$ distinct prime factors, and now suppose that $Q_0$ has $k+1$ such factors. Write $Q_0 = Q_0'p^{\alpha}$, where $p\nmid Q_0'$ and $\alpha \geq 1$. Let $r,s \in \mb{Z}$ be chosen such that $rQ_0' + sp^{\alpha} = 1$. By the Chinese remainder theorem, every $B \pmod{Q_0}$ can be written uniquely as $B = uQ_0' + vp^{\alpha}$, where $0 \leq u \leq p^{\alpha}-1$ and $0 \leq v \leq Q_0'-1$. Thus,
\begin{multline*}
  T(A,B,C,\mbf h,\mbf h';Q_0)=\sum_{u \pmod*{p^{\alpha}}} e_{p^{\alpha}}\left(Cu\right) \sum_{v \pmod*{Q_0'}} e_{Q_0'}\left(Cv\right)\\
 \cdot \prod_{1 \leq i \leq N}K_2(A,uQ_0'+vp^{\alpha}+h_{i};p^{\alpha}Q_0') \prod_{1 \leq j\leq M} \bar{K}_2(A,uQ_0'+vp^{\alpha}+h'_{j};p^{\alpha}Q_0').
\end{multline*}
Applying \eqref{eq:factK2} and the symmetry $K_2(\alpha,\gamma \beta; q') = K_2(\gamma^2\alpha,\beta;q')$ for any $\gamma \in (\mb{Z}/q'\mb{Z})^{\times}$, we see that the products are
\begin{multline*}
\left(\prod_{i = 1}^N K_2(s^3A,uQ_0' + h_i;p^{\alpha}) \prod_{j = 1}^M \bar{K}_2(s^3A,uQ_0'+h_j';p^{\alpha})\right) \\
\cdot \left(\prod_{i = 1}^N K_2(r^3A,vp^{\alpha} + h_i;Q_0') \prod_{j = 1}^M \bar{K}_2(r^3A,vp^{\alpha}+h_j'; Q_0')\right).
\end{multline*}
Making the change of variables $u \mapsto uQ_0'$ and $v \mapsto vp^{\alpha}$, we thus get
\[T(A,B,C,\mbf h,\mbf h';Q_0)=T(s^3A,\bar Q_0',C,\mbf h,\mbf h';p^{\alpha})\cdot T(r^3A,\bar p^{\alpha},C,\mbf h,\mbf h';Q_0'),\]
where the inverses are taken modulo $p^{\alpha}$ and $Q_0'$ respectively.

By induction, we can factor the second bracketed term similarly into $k$ products, and the claim follows.
\end{proof}

In Irving's work \cite{Irv}, where squarefree moduli specifically are treated, an application of a (variant of a) result of Fouvry, Ganguly, Kowalski and Michel on correlations of Kloosterman sums (\cite[Proposition 3.2]{FGKM14}) is used to control the complete sums to prime moduli arising in the factorization in Lemma \ref{lem:toPrime}. To achieve the same goal, we will prove in full detail a similar result for correlations of the complete exponential sums $K_2$ in the next section. In Section \ref{sec:corPrimePow}, we treat the same problem for prime power moduli, using rather different techniques.

\section{Correlations of $K_2$ Sums to Prime Moduli: Cohomological Methods} \label{sec:corPrime}
The goal of this section is to provide an estimate for sums like \eqref{eq:T2}. We will prove, in full detail, the following analogue of \cite[Proposition 3.2]{FGKM14} and \cite[Section 4.3]{Irv} (the latter of which cites private communications for the corresponding result for Kloosterman sums).
This will be the main input to the proof of Proposition \ref{prop:ThEst} i).\\
%
Throughout this section, fix a prime $p$. Given $N,M \geq 1$, and tuples $\mbf{h} \in \mb{F}_p^N$ and $\mbf{h}' \in \mb{F}_p^M$ we define 
$$
T = T_{\mbf{h},\mbf{h}'} := \{h_1,\ldots,h_N,h_1',\ldots,h_M'\},
$$
and for each $\tau \in T$ define
\begin{align*}
\mu(\tau) &= \mu_{\mbf{h}}(\tau) := |\{1 \leq j \leq N : h_j = \tau\}| \\
\nu(\tau) &= \nu_{\mbf{h}'}(\tau) := |\{1 \leq j \leq M : h_j' = \tau\}|.
\end{align*}
\begin{thm}\label{thm:boundCorrCompl}
  For $A\in\F_p^\times$, $\psi\in\widehat\F_p$ a possibly trivial additive character, and $h_1,\ldots,h_N$,  $h_1',\ldots, h_M' \in \F_p$ (where $N+M\ge 1$), we have
  \begin{equation}
    \label{eq:boundCorrCompl}
    \sum_{B \in \F_p} \psi(B)\prod_{i=1}^N K_2(A,B+h_i;p)\prod_{j=1}^M \bar K_2(A,B+h'_j;p)\ll (N+M)3^{N+M}p^{\frac{N+M+1}{2}}
  \end{equation}
unless $\psi$ is trivial and $\mu(\tau) \equiv \nu(\tau) \pmod{3}$ for all $\tau \in T$. 
The implied constant is absolute (e.g. does not depend on $N$ or $M$).
\end{thm}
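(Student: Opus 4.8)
The central idea is to realize $K_2(A,B;p)=\sum_{x\in\F_p^\times}e_p(A\bar x^2+Bx)$ as (up to a normalization) the $\ell$-adic Fourier transform, in the variable $B$, of the trace function $x\mapsto e_p(A\bar x^2)$, which is the trace function of an Artin--Schreier sheaf pulled back along $x\mapsto \bar x^2$. Concretely, set $\mathcal{L}=\mathcal{L}_{\psi(A/x^2)}$ on $\mathbb{G}_m$, extended by zero to $\mathbb{A}^1$, and let $\mathcal{K}=\mathrm{FT}_\psi(j_!\mathcal{L})$ be its Deligne--Fourier transform; then (after checking $\mathcal{L}$ is a Fourier sheaf, which it is since it is lisse of rank $1$, totally ramified at $0$, and tame at $\infty$) $\mathcal{K}$ is a middle-extension sheaf on $\mathbb{A}^1$ whose trace function is $B\mapsto -K_2(A,B;p)$ at all but finitely many points, and which by Laumon's stationary phase / Katz's results is geometrically irreducible, pointwise pure of weight $1$, of bounded rank and conductor (uniformly in $p$). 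Translating by $-h_i$ gives sheaves $\mathcal{K}_i:=[+h_i]^*\mathcal{K}$ with trace functions $K_2(A,B+h_i;p)$; complex conjugation corresponds to the dual $\mathcal{K}^\vee$ (equivalently $[x\mapsto -x]^*\mathcal{K}$ up to twist, since $\overline{K_2(A,B;p)}=K_2(\bar A,-B;p)$ — I'd record the precise duality formula). The sum in \eqref{eq:boundCorrCompl} is then, up to $O(p^{(N+M)/2})$ boundary terms, a character sum against the trace function of $\mathcal{M}:=\mathcal{L}_\psi\otimes\bigotimes_i \mathcal{K}_i\otimes\bigotimes_j \mathcal{K}^\vee_j$.

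**The dichotomy via cohomology.** By the Grothendieck--Lefschetz trace formula and Deligne's Riemann Hypothesis, $\sum_{B\in\F_p}t_{\mathcal M}(B)=-\mathrm{tr}(\mathrm{Frob}\mid H^1_c(\overline{\mathbb{A}^1},\mathcal M))+(\text{contribution of }H^0_c=0)$, and the $H^1_c$ term is bounded by $(\dim H^1_c)\,p^{(w+1)/2}$ where $w\le N+M$ is the weight; $\dim H^1_c$ is controlled by the Euler--Poincaré formula in terms of the rank and the Swan and drop invariants of $\mathcal M$ at its (boundedly many) singular points, giving a bound of the shape $C(N+M)3^{N+M}p^{(N+M+1)/2}$ once one knows $\mathcal M$ has no trivial geometric subrepresentation — since then $H^2_c=0$ as well (no geometrically constant quotient), so the sum is genuinely of size $p^{(N+M)/2}$ order. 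The $3^{N+M}$ factor is the bound on the rank of $\mathcal M$ (each $\mathcal K$ has rank... — I'd compute, via stationary phase, $\mathrm{rank}\,\mathcal K$, and the relevant quantity governing the conductor — the relevant exponent is the one appearing as $3^{N+M}$, reflecting the three cube roots appearing in the stationary-phase description of $K_2$ mod $p$). So the whole theorem reduces to: \emph{$\mathcal M$ has no trivial geometric subrepresentation unless $\psi$ is trivial and $\mu(\tau)\equiv\nu(\tau)\pmod 3$ for all $\tau\in T$.}

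**Proving the triviality criterion — the main obstacle.** This is the heart of the matter and where I expect the real work to lie. The strategy is the Goursat--Kolchin--Ribet method as in \cite{FKMSumProducts}: first determine the geometric monodromy group $G_{\mathrm{geom}}$ of $\mathcal K$ itself. Using Katz's classification (Fourier transforms of Artin--Schreier sheaves over $\mathbb{G}_m$ with tame behaviour at $\infty$), one shows $\mathcal K$ is geometrically irreducible, not geometrically self-dual (or is, with specified type) — in fact one expects $G_{\mathrm{geom}}$ to be the full $\mathrm{SL}$ or $\mathrm{GL}$ of the standard representation, or a classical group, and crucially that distinct translates $\mathcal K_i$, $\mathcal K_j$ ($h_i\ne h_j$) are \emph{not} geometrically isomorphic, and $\mathcal K_i$ is not geometrically isomorphic to $\mathcal K^\vee_j\otimes(\text{rank-}1)$ for $h_i\ne h'_j$. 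Then invariants $(\bigotimes_i\mathcal K_i\otimes\bigotimes_j\mathcal K^\vee_j)^{G_{\mathrm{geom}}}$ are computed by a Goursat-type argument: they can only come from pairing up factors, which forces matching multiplicities. The subtlety — and the reason the congruence is mod $3$ rather than an exact equality — is that $\mathcal K$ is \emph{not} tamely ramified everywhere: its local structure (from the $\bar x^2$ pullback and the $A/x^2$ phase) introduces a $\boldsymbol\mu_3$-type ambiguity, so that $\mathcal K$ differs from $\mathcal K^\vee$ only up to a character of order $3$ (a Kummer sheaf $\mathcal L_{\chi}$ with $\chi^3=1$), coming from the Gauss-sum / cubic-root phenomenon visible already in Lemma \ref{lem:expK2}. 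Consequently self-pairings among the $\mathcal K_i$'s and among the $\mathcal K^\vee_j$'s are possible, tensoring up such Kummer characters, and the product sheaf is geometrically trivial exactly when these cubic characters cancel, i.e. when $\mu(\tau)\equiv\nu(\tau)\pmod 3$ for every $\tau$, and simultaneously the extra $\mathcal L_\psi$ twist is trivial, i.e. $\psi$ trivial. Carefully tracking the local monodromy at $0$ and $\infty$ of $\mathcal K$ (and of $[+h]^*\mathcal K$ at the moved singularities) to pin down exactly this $\boldsymbol\mu_3$ ambiguity, and verifying the no-common-factor statements among translates, is the technically demanding step; everything else is bookkeeping with the Euler--Poincaré formula and Deligne's bound.
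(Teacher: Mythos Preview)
Your overall framework is the same as the paper's: realize $K_2$ as the trace function of the $\ell$-adic Fourier transform $\mathcal{K}$ of the Artin--Schreier sheaf $\mathcal{L}_{e_p(A/x^2)}$, form the tensor product $\mathcal{M}$, apply Grothendieck--Lefschetz and Deligne, and control the main term via Goursat--Kolchin--Ribet. The rank of $\mathcal{K}$ is indeed $3$, which accounts for the $3^{N+M}$.

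However, your explanation of where the mod-$3$ condition comes from is wrong, and this is a genuine gap, not just imprecision. You assert that ``$\mathcal{K}$ differs from $\mathcal{K}^\vee$ only up to a character of order $3$ (a Kummer sheaf $\mathcal{L}_\chi$ with $\chi^3=1$)'' and that invariants arise by ``self-pairings \ldots\ tensoring up such Kummer characters''. This is false. The paper shows (via Katz) that the geometric monodromy group of $\mathcal{K}$ is $\SL_3(\C)$ in its standard representation $\Std$. Since $\Std$ and $\Std^*$ are non-isomorphic irreducible $\SL_3$-representations and $\SL_3$ has no nontrivial one-dimensional representations, there is \emph{no} geometric isomorphism $\mathcal{K}\cong\mathcal{K}^\vee\otimes\mathcal{L}$ for any rank-one $\mathcal{L}$; your proposed mechanism simply does not exist.

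The correct route is this. The Goursat step (which in the paper is carried out by exploiting that the local monodromy of $\mathcal{K}$ at $0$ is a unipotent pseudoreflection, forcing any rank-one $\mathcal{L}$ in a putative isomorphism $[+h]^*\mathcal{K}\cong\mathcal{K}\otimes\mathcal{L}$ with $h\neq 0$ to be ramified at $0$ and $-h$, and then reaching a contradiction on the dimension of the stalk at $0$) shows that the monodromy of $\bigoplus_{\tau\in T}[+\tau]^*\mathcal{K}$ is the \emph{full} product $\SL_3(\C)^{|T|}$. Consequently the coinvariants of $\mathcal{M}$ factor over $\tau\in T$, and for each $\tau$ one is asking whether the trivial representation occurs in $\Std^{\otimes\mu(\tau)}\otimes(\Std^*)^{\otimes\nu(\tau)}$ as an $\SL_3$-module. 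The center $Z(\SL_3)\cong\mu_3$ acts on this space by $\zeta\mapsto\zeta^{\mu(\tau)-\nu(\tau)}$, so a trivial constituent forces $3\mid(\mu(\tau)-\nu(\tau))$; conversely this condition is sufficient. The ``$3$'' is thus the order of the center of the monodromy group, not a Kummer-sheaf relation between $\mathcal{K}$ and its dual.
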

\begin{rem}
We can rewrite the left-hand side of \eqref{eq:boundCorrCompl} as
$$
\sum_{B \in \mb{F}_p} \psi(B) \prod_{\tau \in T} K_2(A,B+\tau;p)^{\mu(\tau)} \bar{K}_2(A,B+\tau;p)^{\nu(\tau)}.
$$
Note that when $\psi$ is trivial and $\mu(\tau) = \nu(\tau)$ for all $\tau \in T$, the summands are all non-negative. We should therefore not expect to find any improvement over the trivial bound in general (outside of the possibility that the absolute values of $p^{-1/2}K_2(A,B+\tau;p)$ are small, which is atypical for large $p$ given that these normalized sums become equidistributed according to Haar measure on $\text{SU}_3(\mb{C})$ as $B$ varies modulo $p$ and $p \ra \infty$). 
It will transpire from the proof of Theorem \ref{thm:boundCorrCompl} that our characterization of when savings are available for the above correlation sums is essentially sharp.
\end{rem}

To prove Theorem \ref{thm:boundCorrCompl} we will employ the method surveyed in \cite{FKMSumProducts}. Highlights of this method include:
\begin{itemize}
\item the interpretation of $K_2$ as the trace function of an $\ell$-adic sheaf, pure of weight 1, and the computation of its geometric monodromy group (following Katz \cite{KatzESDE});
\item the determination of geometric isomorphisms between shifts of these sheaves to find the monodromy of a product sheaf via the Goursat--Kolchin--Ribet criterion; and
\item the application of the Grothendieck--Lefschetz trace formula and the Grothendieck--Ogg--Shafarevich Euler--Poincar\'{e} formula.
\end{itemize}
We will use the statements from \cite{PGGaussDistr16} that also track the dependencies in the number of factors (i.e., $N$ and $M$), for completeness.
For the sake of concision, we refer the reader to these (and the references therein) for additional definitions, notations and other details.

\subsection{Cohomological Interpretation of $K_2$ and \eqref{eq:boundCorrCompl}}

In the language of \cite{DelEC}, the pointwise bound on $K_2$ modulo primes from Lemma \ref{lem:WeilK2} (see \eqref{eq:DworkBombieriWeil}) can be seen as the outcome of applying the Grothendieck--Lefschetz trace formula and Deligne's Riemann hypothesis over finite fields \cite{Del2} to the one-dimensional Artin--Schreier $\ell$-adic sheaf $\Lc_{e(f_{A,B}/p)}$ on $\G_m\times\F_p$:
\begin{align*}
  \left|\sum_{x\in\F_p^\times} e(f_{A,B}(x)/p)\right|&=\left|\sum_{x\in\F_p^\times} \iota \tr \left(\Frob_{x, p}\mid \Lc_{e(f_{A,B}/p)}\right)\right|\\
                                                     &=\left|\sum_{i=0}^2 (-1)^i\iota\tr\left(\Frob_p\mid H^i_c(\G_m\times\overline\F_p, \Lc_{e(f_{A,B}/p)})\right)\right|\\
                                        &\le \sqrt{p}\cdot \dim H^1_c\left(\G_m\times\overline\F_p, \Lc_{e(f_{A,B}/p)}\right),
\end{align*}
where $\ell\neq p$ is an auxiliary prime, $\iota: \overline\Q_\ell\to\C$ is a compatible\footnote{By this we mean that $\iota$ must take $\ell$-adic additive characters $\lambda : \mb{F}_p \ra \overline{\mb{Q}}_{\ell}$, implicit in the definition of the $\ell$-adic Fourier transform defined below, to usual additive characters $\iota \circ \lambda : \mb{F}_p \ra \mb{C}$, e.g., $\iota(\lambda(x)) := e(x/p)$ for each $x \in \mb{F}_p$.} embedding, $\Frob_{x, p}\in \pi_1^\geom(\G_m,\overline\eta)$ is the geometric Frobenius class at $x\in\G_m(\F_p)$ for $\overline\eta$ a geometric generic point, and $\Frob_p\in\Gal(\overline\F_p/\F_p)$ is the geometric Frobenius (acting on the cohomology groups). The dimension of the first cohomology group is bounded independently from $p$ by the Grothendieck--Ogg--Shafarevich formula, giving the constant $C'$ in \eqref{eq:DworkBombieriWeil} above.

To handle \emph{sums of} $K_2$ sums in Theorem \ref{thm:boundCorrCompl}, we adopt a different perspective, and consider $K_2$ as an $\ell$-adic trace function itself, using the $\ell$-adic Fourier transform of Deligne. Some properties of the sheaf thus produced are given in the following lemma.

\begin{lem}\label{lem:Gc}
  Assume that $p$ is an odd prime, fix an auxiliary prime $\ell\neq p$, a compatible embedding $\iota: \overline\Q_\ell\to\C$, and let $A\in\F_p^\times$. There exists a middle-extension $\overline\Q_\ell$-sheaf of $\overline\Q_\ell$-modules $\Gc_A$ that is lisse on $\G_m\times\F_p$ and pointwise pure of weight $0$, as well as $\gamma \in\C$ of modulus one, such that for every $B\in\G_m(\F_p)$,
  \begin{equation}
    \label{eq:Gtf}
    \iota\tr\left(\Frob_{B,p}\mid \Gc_A\right)=\frac{-\gamma K_2(A,B;p)}{\sqrt{p}}.
  \end{equation}
  Moreover:
  \begin{enumerate}
  \item $\Gc_A$ has rank 3, with a unique $\infty$-break at $2/3$;
  \item\label{item:SwanG} $\Swan_\infty(\Gc_A)=2$;
  \item\label{item:tameG} $\Gc_A$ is tamely ramified at $0$, and its local monodromy is a unipotent pseudoreflection;
  \item\label{item:monodromy} if $p > 7$, the arithmetic and geometric monodromy groups $G_\arith$, $G_\geom$ (that is, the Zariski closures of the images of the arithmetic/geometric fundamental groups by the corresponding representation composed with $\iota$) are equal and isomorphic to $\SL_3(\C)$.
  \end{enumerate}
\end{lem}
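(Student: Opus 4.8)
The plan is to realize $K_2(A,\cdot\,;p)$ as an $\ell$-adic Fourier transform and then read off everything from Laumon's stationary phase together with Katz's monodromy machinery. Writing $K_2(A,B;p)=\sum_{x\in\F_p^\times}\psi(Ax^{-2})\psi(Bx)$, set $\mc F:=j_!\Lc_{\psi(Ax^{-2})}$, where $j:\G_m\hookrightarrow\mb A^1$; since $\Lc_{\psi(Ax^{-2})}$ is wildly ramified at $0$ this is already a middle extension, it is geometrically irreducible (being of rank one), and it has no geometric subquotient isomorphic to any Artin--Schreier sheaf $\Lc_{\psi(ax)}$, $a\in\F_p$ (these are all unramified at $0$, whereas $\mc F$ is wild there), so $\mc F$ is a Fourier sheaf. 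I would then define $\Gc_A$ to be Deligne's $\ell$-adic Fourier transform $\mathrm{FT}_\psi(\mc F)$, Tate-twisted by a square root of $p$ so that it becomes pointwise pure of weight $0$; the resulting unit-modulus ambiguity is the constant $\gamma$ in \eqref{eq:Gtf}, and \eqref{eq:Gtf} itself is the compatibility of $\mathrm{FT}_\psi$ with trace functions and with $\iota$. That $\Gc_A$ is a middle-extension sheaf lisse on $\G_m$, pure of weight $0$, then follows from Laumon's theorem that $\mathrm{FT}_\psi$ preserves the subcategory of Fourier sheaves and raises weights by one, the twist restoring weight $0$.

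For the local invariants (1)--(3) I would appeal to the numerology of the $\ell$-adic Fourier transform (Laumon's stationary phase, as packaged in \cite{KatzESDE}). The generic rank equals $\dim H^1_c(\mb A^1_{\overline\F_p},\mc F\otimes\Lc_{\psi(tx)})$ for generic $t$; since the restriction of $\mc F\otimes\Lc_{\psi(tx)}$ to $\G_m$ is $\Lc_{\psi(Ax^{-2}+tx)}$, which has $\Swan_0=2$, $\Swan_\infty=1$ and is geometrically nontrivial, Grothendieck--Ogg--Shafarevich gives $\chi_c=-3$ and (as $H^0_c=H^2_c=0$) rank $3$. As $\mc F$ has no slope-$1$ part at $\infty$, $\Gc_A$ is lisse on $\G_m$. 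The local monodromy of $\mc F$ at $0$ is $\Lc_{\psi(Ax^{-2})}$, rank $1$ with unique slope $2$; $\mathrm{FT}_{\mathrm{loc}}(0,\infty)$ transports it to the $\infty$-local monodromy of $\Gc_A$, which is therefore of rank $1+\Swan_0(\mc F)=3$ with unique break $2/(2+1)=2/3$, whence $\Swan_\infty(\Gc_A)=3\cdot\tfrac23=2$; this gives (1) and (2). Since $\mc F$ is unramified at $\infty$, $\Gc_A$ is tame at $0$, and $\mathrm{FT}_{\mathrm{loc}}(\infty,0)$ applied to the (unramified, rank one) monodromy of $\mc F$ at $\infty$ yields a tame local monodromy at $0$ whose only nontrivial Jordan block has size $2$ --- a unipotent pseudoreflection on $\C^3$ --- which is (3). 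These are the exact analogues, with a pole of order $2$ in place of order $1$, of the classical local data of the Kloosterman sheaf $\mathrm{FT}_\psi(\Lc_{\psi(1/x)})$.

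For (4) I would argue as follows, assuming $p>7$. Being $\mathrm{FT}_\psi$ of a geometrically irreducible Fourier sheaf, $\Gc_A$ is geometrically irreducible; being also pure of weight $0$, $G_\geom$ is a semisimple subgroup of $\GL_3(\C)$ acting irreducibly on the standard representation. Next, $\det\Gc_A$ is geometrically trivial: it is lisse of rank $1$ on $\G_m$; its local monodromy at $0$ is the determinant of a unipotent element, hence trivial, so it is unramified at $0$; and the ``totally wild, break $2/3$'' structure at $\infty$ forces it to be tame at $\infty$ (after pullback along the cubic tame cover the Artin--Schreier leading terms are cyclically permuted and sum to $0$, as $1+\zeta_3+\zeta_3^2=0$); a rank-$1$ lisse sheaf on $\G_m$ unramified at $0$ and tame at $\infty$ is geometrically trivial. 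Hence $G_\geom\subseteq\SL_3(\C)$. Now $\Swan_\infty(\Gc_A)=2\ne0$, so $\Gc_A$ is wildly ramified and $G_\geom$ is infinite; and $G_\geom$ contains the image of the local monodromy at $0$, a unipotent pseudoreflection. An imprimitive (monomial) subgroup of $\GL_3$ contains no nontrivial unipotent element, while $\mathrm{SO}_3(\C)\cong\mathrm{PGL}_2(\C)$ acting through $\mathrm{Sym}^2$ of its standard representation contains no pseudoreflection; so $G_\geom$ is primitive and not orthogonal. By Katz's classification (\cite{KatzESDE}) of the geometric monodromy groups of irreducible rank-$3$ sheaves containing a pseudoreflection, for $p>7$ the only remaining option is $G_\geom=\SL_3(\C)$. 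Finally $G_\arith$ normalizes $G_\geom=\SL_3$ with abelian quotient, and since $\det\Gc_A$ is geometrically trivial and pure of weight $0$ it is arithmetically $\beta^{\deg}$ for some $|\beta|=1$, which the normalization of $\gamma$ lets us take trivial; thus $G_\arith\subseteq\SL_3$ and $G_\arith=G_\geom=\SL_3(\C)$.

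The main obstacle I expect is the combination underlying (4): one has to produce the local data of items (1)--(3) \emph{precisely} --- in particular that \emph{all} $\infty$-breaks equal $2/3$ and that the tame monodromy at $0$ is a \emph{single} size-$2$ Jordan block rather than, say, a principal unipotent --- since this is exactly the input that Katz's classification needs, and the stationary-phase bookkeeping is delicate; and then one must make the group-theoretic endgame airtight, excluding all finite primitive subgroups of $\SL_3(\C)$, the principal $\mathrm{PGL}_2$, and tensor- or Kummer-induced possibilities, which is where the hypothesis $p>7$ is genuinely used to rule out small-characteristic coincidences.
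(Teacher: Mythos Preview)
Your approach is correct and is essentially a reconstruction of what the paper's cited source (Katz, \emph{ESDE}, \S7.3, \S7.12 and Theorem 7.12.3.1) does; the paper itself simply invokes those results as a black box and then performs the twist by $\beta^{-1/3}$ to force $G_\arith\subseteq\SL_3$. Your local computations for (1)--(3) via Laumon's stationary phase are right, and your endgame for (4) --- irreducible, $\det$ trivial, contains a unipotent pseudoreflection, hence (ruling out imprimitive and $\SO_3$) must be $\SL_3$ --- is the skeleton of Katz's own argument.

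Two small points to tighten. First, the inference ``$\Swan_\infty\neq 0$, so $G_\geom$ is infinite'' is not valid as stated: a sheaf with finite geometric monodromy can perfectly well be wildly ramified (think of an Artin--Schreier torsor). The correct justification for infiniteness is the one you also give in the same breath --- $G_\geom$ contains a nontrivial \emph{unipotent} element (your pseudoreflection from (3)), which in characteristic zero has infinite order. Second, your determinant-at-$\infty$ argument via the cubic pullback is fine but can be replaced by the cleaner observation that the unique break of the rank-one sheaf $\det\Gc_A$ at $\infty$ is a non-negative integer bounded by $2/3$, hence $0$.

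On the comparison: the paper highlights that the hypothesis of Katz's theorem actually being checked is ``$f(x)+f(-x)$ nonconstant'' (here $2A/x^2$), which is what rules out self-duality (hence $\SO_3$) in Katz's scheme; you arrive at the same exclusion by a direct Jordan-type argument. Where $p>7$ genuinely enters in Katz's proof is in guaranteeing that the local picture at $\infty$ is exactly as advertised and that no small-characteristic finite primitive groups intervene; you flag this correctly in your final paragraph, and a full write-up would either quote Katz's theorem at that point (as the paper does) or reproduce the relevant case analysis.
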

\begin{proof}
  A sheaf $\Gc_A'$ satisfying all of the above properties but \eqref{item:monodromy} is given by the $\ell$-adic Fourier transform of the Artin--Schreier sheaf $\Lc_{e(f(X)/p)}$, where $f(X)=A/X^2\in\F_p(X)$. The resulting trace function (the left-hand side of \eqref{eq:Gtf}) is indeed the discrete Fourier transform of $e(f(x)/p)$, namely $K_2$. The statements of the lemma are all contained in \cite[Section 7.3, Section 7.12 ($\SL$-Example(3)), Theorem 7.12.3.1]{KatzESDE}, with $h=f$, $\alpha=0$ and $n_0=\ord_0(f)=2$. In particular, the rank is $1+n_0=3$.

  Regarding the last property, since $f(x)+f(-x)$ is nonconstant the geometric monodromy group is isomorphic to $\SL_3(\C)$ for $p > 7$ by the theorem from ibid. By \cite[Section 7.12 ($\SL$-Example(3))]{KatzESDE}, $\det\Gc'_A$ is geometrically trivial, whence arithmetically isomorphic to $\beta\otimes\overline\Q_\ell$, where $\beta$ is a $p$-Weil number of weight $0$. Therefore, setting $\Gc_A=\beta^{-1/3}\otimes\Gc'_A$ yields
  \[\SL_3(\C)=G_\geom(\Gc_A)=G_\geom(\Gc'_A)\le G_\arith(\Gc_A)\le\SL_3(\C),\]
  with $\Gc_A$ still verifying the previous properties. The theorem then holds with the twist $\gamma=\iota\beta^{-1/3}\in\C$.
\end{proof}
Note that the purity and dimensionality statements encompass the bound \eqref{eq:DworkBombieriWeil} from Lemma \ref{lem:WeilK2}:
\begin{align} \label{eq:WeilV2}
|K_2(A,B;p)|\le \dim(\Gc_A)\sqrt{p}=3\sqrt{p}
\end{align}
(including at $B=0$, when $K_2(A,0;p)$ is, up to a unimodular factor, quadratic Gauss sum with modulus exactly $\sqrt{p}$).

The bound in \eqref{eq:boundCorrCompl} is trivial if $p \leq 7$, so we may assume in what follows that $p > 7$. Using Lemma \ref{lem:Gc}, we may bound the modulus of the left-hand side of \eqref{eq:boundCorrCompl} as
\begin{align}\label{eq:firstCoh}
\ll p^{\frac{N+M}{2}}\left(\left|\sum_{B\in U(\F_p)} \iota \tr\left(\Frob_{B,p}\mid \Hc_{A,\mbf{h},\psi}\right)\right|+3^{N+M}(N+M)\right),
\end{align}
setting
\begin{align*}
  \Hc_{A,\mbf{h},\psi}:=\Lc_\psi\otimes \left(\bigotimes_{\tau \in T_{\mbf{h},\mbf{h}'}} [+\tau]^*\Gc_A^{\otimes \mu(\tau)}\otimes [+\tau]^*D(\Gc_A)^{\otimes \nu(\tau)} \right),
\end{align*}
and letting $U(\mb{F}_p)$ denote its set of lisse points. Here, the tensor product $\otimes$ and dual $D$ are understood at the level of the corresponding $\ell$-adic representations (as in \cite{KatzGKM,KatzESDE}). The second term in brackets in \eqref{eq:firstCoh} arises by applying \eqref{eq:WeilV2} to the set of ramification points of the tensor product, of which there are $\leq N+M$. \\
The trace formula will now be applied to \eqref{eq:firstCoh}.

\subsection{Applying the trace formula}

By the Grothendieck--Lefschetz trace formula and the Grothendieck--Ogg--Shafarevich formula (see the references in \cite[Theorem 2.5]{PGGaussDistr16}),
\begin{equation}
  \label{eq:tfHc}
  \sum_{B\in U(\F_p)} \tr\left(\Frob_{B,p}\mid \Hc_{A,\mbf{h},\psi}\right)=p\cdot \tr \left(\Frob_p \mid (\Hc_{A,\mbf{h},\psi})_{\pi_{1,p}^\geom(U, \overline\eta)}\right) + O \left(E(\Hc_{A,\mbf{h},\psi})\sqrt{p}\right)
\end{equation}
with an absolute implied constant, where
\begin{align*}
  E(\Hc_{A,\mbf{h},\psi})&=\rank(\Hc_{A,\mbf{h},\psi}) \left(|\Sing(\Hc_{A,\mbf{h},\psi})| + \sum_{x\in \Sing(\Hc_{A,\mbf{h},\psi})} \Swan_x(\Hc_{A,\mbf{h},\psi})\right)\\
                 &\le 3^{N+M}\left(N+M+(N+M)\cdot 2+1\right)\le 4(N+M) 3^{N+M},
\end{align*}
the inequality coming from Lemma \ref{lem:Gc} and the ramification properties of the tensor product recalled in \cite[Lemma 1.3]{KatzGKM}. Let
\[\Hc^+_{A,\mbf{h},\psi}=\Lc_\psi\oplus\left(\bigoplus_{\tau \in T_{\mbf{h},\mbf{h}'}} [+\tau]^*\Gc_A\right)\]
and note that
\begin{align*}
  G_\geom \left(\Hc^+_{A,\mbf{h},\psi}\right)&\le G_\arith \left(\Hc^+_{A,\mbf{h},\psi}\right)\ni\Frob_p\\
                                                         &\le G_\geom(\Lc_{\psi})\times G_\geom(\Gc_A)^{H}
                                                           =\begin{cases}
                                                             \Z/p\times\SL_3(\C)^H : \psi\neq 0\\
                                                             \SL_3(\C)^H: \psi = 0,
                                                             \end{cases}
\end{align*}
where $H :=|T_{\mbf{h},\mbf{h}'}|$. A very convenient case arises when these inclusions are equalities: in this event $\Frob_p$ acts trivially on the coinvariant space in \eqref{eq:tfHc}, and the right-hand side becomes
\begin{align*}
  &p\cdot \dim(\Hc_{A,\mbf{h},\psi})_{G} + O \left(E(\Hc_{A,\mbf{h},\psi})\sqrt{p}\right)\\
  &=\delta_{\psi=0}\prod_{\tau \in T}\mult_1 \left(\Std^{\otimes \mu(\tau)}\otimes D(\Std)^{\otimes \nu(\tau)}\right) + O((N+M)3^{N+M}p^{1/2}),
\end{align*}
by Schur's Lemma (see \cite[Lemma 4.6, Proposition 4.8]{PGThe}), where $\mult_1$ is the multiplicity of the trivial representation on a representation $\rho$. 
Finally, the multiplicities are zero if, and only if, $3|(\mu(\tau) - \nu(\tau))$ for all $\tau \in T_{\mbf{h},\mbf{h}'}$: this is precisely the content of \cite[Prop. 4.5(2)]{KowRic}, in the case $N = 3$ (in the notation there, $\mult_1\left(\Std^{\otimes \mu(\tau)} \otimes D(\Std)^{\otimes \nu(\tau)}\right) = A_{\nu(\tau),\mu(\tau)}$, for each $\tau$).

Therefore, it only remains to prove:
\begin{lem}
  The arithmetic and geometric monodromy groups of $\Hc_{A,\mbf{h},\psi}$ coincide and are isomorphic to $\Z/p\times\SL_3(\C)^H$ if $\psi$ is non trivial, and $\SL_3(\C)^H$ otherwise, where $H = |T_{\mbf{h},\mbf{h}'}|$.
\end{lem}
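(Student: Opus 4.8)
The plan is to deduce this from the Goursat--Kolchin--Ribet criterion, applied to the auxiliary direct-sum sheaf $\Hc^+_{A,\mbf h,\psi}$ introduced above (it is this sheaf, rather than the tensor-product sheaf $\Hc_{A,\mbf h,\psi}$, whose monodromy is what is needed above). The discussion preceding the statement already provides the a priori inclusions
\[G_\geom(\Hc^+_{A,\mbf h,\psi})\le G_\arith(\Hc^+_{A,\mbf h,\psi})\le \Z/p\times\SL_3(\C)^H\]
when $\psi$ is nontrivial (and $\le\SL_3(\C)^H$ otherwise), using that $G_\arith(\Lc_\psi)=G_\geom(\Lc_\psi)=\Z/p$ for a nontrivial additive character $\psi$ of $\F_p$ and that $G_\arith([+\tau]^*\Gc_A)=G_\geom([+\tau]^*\Gc_A)=\SL_3(\C)$ by Lemma~\ref{lem:Gc}\eqref{item:monodromy} (translation by an element of $\F_p$ does not affect the monodromy groups). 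It therefore suffices to show that $G_\geom(\Hc^+_{A,\mbf h,\psi})$ \emph{contains} the displayed product; all three groups then coincide and equal it. The summands of $\Hc^+_{A,\mbf h,\psi}$ are $\Lc_\psi$ (rank one, geometric monodromy the finite cyclic group $\Z/p$, trivial when $\psi=0$) together with the sheaves $[+\tau]^*\Gc_A$ for $\tau\in T$, each geometrically irreducible of rank $3$ with geometric monodromy $\SL_3(\C)$ acting through its standard representation. By the Goursat--Kolchin--Ribet criterion (see \cite{PGGaussDistr16}, and also \cite{KatzESDE,KatzGKM}), in order to obtain $G_\geom(\Hc^+_{A,\mbf h,\psi})=\Z/p\times\SL_3(\C)^H$ it is enough to check that the summands are pairwise geometrically \emph{independent}, namely: (a) for distinct $\tau,\tau'\in T$ there is no rank-one sheaf $\Lc$ with $[+\tau]^*\Gc_A$ geometrically isomorphic to $[+\tau']^*\Gc_A\otimes\Lc$ or to $[+\tau']^*D(\Gc_A)\otimes\Lc$ (the second case must also be excluded, since the standard representation of $\SL_3$ is not self-dual); and (b) $\Lc_\psi$ is not a rank-one twist of any $[+\tau]^*\Gc_A$ or of its dual. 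Condition (b) is immediate from the mismatch of ranks ($1$ versus $3$); equivalently, $\SL_3(\C)^H$ is perfect, hence shares no nontrivial quotient with the abelian group $\Z/p$, so that the $\Lc_\psi$-factor splits off automatically.

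The substance of the proof is condition (a), which I would extract from the local ramification of $\Gc_A$ recorded in Lemma~\ref{lem:Gc}\eqref{item:tameG}: $\Gc_A$ is lisse on $\G_m$ (its only finite singularity being at $0$) and is tamely ramified at $0$ with local monodromy a \emph{unipotent pseudoreflection}; equivalently, as a representation of the inertia group $I_0$ it is $\mathbf 1\oplus W$ with $W$ a $2$-dimensional indecomposable unipotent representation. Duality preserves tameness and carries a unipotent pseudoreflection to a unipotent pseudoreflection, so $D(\Gc_A)$ has the same local shape at $0$; and pullback by the translation $x\mapsto x+\tau$ moves the finite singularity to $x=-\tau$. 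Thus each of $[+\tau]^*\Gc_A$ and $[+\tau]^*D(\Gc_A)$ is lisse on $\mathbb A^1\setminus\{-\tau\}$ and has a unipotent pseudoreflection as its local monodromy at $-\tau$. Now suppose, towards a contradiction, that $[+\tau]^*\Gc_A$ is geometrically isomorphic to $\mathcal F\otimes\Lc$, where $\tau\ne\tau'$, $\Lc$ has rank one, and $\mathcal F\in\{[+\tau']^*\Gc_A,\ [+\tau']^*D(\Gc_A)\}$. Comparing local monodromy representations at $-\tau'$ (a point distinct from $-\tau$): on the left-hand side it is trivial, since $[+\tau]^*\Gc_A$ is lisse at $-\tau'$; on the right-hand side it is $\chi\otimes(\mathbf 1\oplus W)=\chi\oplus(\chi\otimes W)$, where $\chi$ is the one-dimensional local character of $\Lc$ at $-\tau'$. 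Since $\chi\otimes W$ is again a $2$-dimensional \emph{indecomposable} $I_{-\tau'}$-representation, $\chi\otimes(\mathbf 1\oplus W)$ is never trivial, a contradiction. This proves (a).

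With (a) and (b) established, the Goursat--Kolchin--Ribet criterion yields $G_\geom(\Hc^+_{A,\mbf h,\psi})=\Z/p\times\SL_3(\C)^H$ (respectively $\SL_3(\C)^H$ when $\psi$ is trivial), and combining this with the a priori inclusions recalled above forces $G_\geom(\Hc^+_{A,\mbf h,\psi})=G_\arith(\Hc^+_{A,\mbf h,\psi})$ equal to that same group, as claimed. I expect the main obstacle to be exactly the verification of (a): it is a rigidity statement, and the crucial point is that tensoring by a rank-one sheaf can neither move nor dissolve the tame unipotent pseudoreflection of $\Gc_A$; this pins down the finite singular point $-\tau$ of each shifted sheaf and thereby separates the distinct shifts --- as well as their duals --- from one another.
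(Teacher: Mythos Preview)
Your proof is correct and follows essentially the same strategy as the paper: Goursat--Kolchin--Ribet applied to $\Hc^+_{A,\mbf h,\psi}$, with the crucial independence of the shifts extracted from the unipotent-pseudoreflection local monodromy of $\Gc_A$ at $0$. Two minor differences: you additionally exclude the dual twist $[+\tau']^*D(\Gc_A)\otimes\Lc$ (which the paper does not mention explicitly), and your local argument compares the full $I_{-\tau'}$-representation (trivial versus $\chi\oplus(\chi\otimes W)$ with $\chi\otimes W$ indecomposable), whereas the paper compares dimensions of inertia invariants at the other singular point; both arguments exploit the same structural fact and reach the same contradiction.
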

\begin{proof}
  Let us first consider the case where $\psi$ is trivial. By the Goursat--Kolchin--Ribet criterion \cite[Section 1.8]{KatzESDE}, since the pair $(\SL_3(\C),\Std)_{i={1,2}}$ is Goursat-adapted (in the language given there), it suffices to show that there exists no geometric isomorphism of the form
  \[[+h]^*\Gc_A\cong [+h']^*\Gc_A\otimes\Lc,\quad h\neq h',\]
  for a one-dimensional sheaf $\Lc$. Without loss of generality, $h'=0$. Let us assume by contradiction that $h\neq 0$. Since the left-hand side is ramified exactly at $-h$ and $\infty$, $\Lc$ must be ramified at $0$ and $-h$ (and possibly at $\infty$). It follows that
  \[\Gc_A^{I_0}\cong([+h]^*\Gc_A)^{I_{-h}}\cong (\Gc_A\otimes\Lc)^{I_{-h}}\cong\Gc_A\otimes\Lc^{I_{-h}}=0.\]
  However, this contradicts Lemma \ref{lem:Gc}(3), which states that the stalk at $0$ of $\Gc_A$ is a rank 2 
  pseudoreflection.


  If $\psi$ is non-trivial, let us write $\Hc^+_{A,\mbf{h}, \psi}=\Lc_\psi\oplus \Fc$, where we may assume by the above that the arithmetic and geometric monodromy groups of $\Fc$ coincide and are isomorphic to $\SL_3(\C)^H$. We have
  \[G_\geom \left(\Hc^+_{A,\mbf{h}, \psi}\right)\le G_\arith \left(\Hc^+_{A,\mbf{h}, \psi}\right)\le \F_p\times\SL_3(\C)^H,\]
  and both $G_\geom$ surjects onto both $\F_p$ and $\SL_3(\C)^H$. The surjection onto $\F_p$ implies that $G_\geom$ has at least $p$ connected components, of which the component at the identity contains $\SL_3(\C)^H$. It follows that $G_\geom = \F_p \times \SL_3(\C)^H$, as claimed.  
\end{proof}
This finishes altogether the proof of Theorem \ref{thm:boundCorrCompl}. \qed

\section{Correlations of $K_2$-sums to Prime Power Moduli: Stationary Phase Methods} \label{sec:corPrimePow}
\noindent In this section, we complement the results of the previous section, which applied to correlations of $K_2$ sums to prime moduli, with a treatment of $K_2$ sums to higher prime power moduli. \\
Before stating Theorem \ref{thm:expsumPrimPow} below, we need a few elements of notation.
Fix $n \geq 2$ and $p > 3$. Let $N,M \geq 0$ with $K := N + M \geq 1$, and suppose $\mbf{h} \in \mb{Z}^N$ and $\mbf{h}' \in \mb{Z}^M$. Similarly to the previous section, given $\tau \in \mb{Z}/p^n\mb{Z}$ we define 
\begin{align*}
\mu(\tau) = \mu_{\mbf{h}}(\tau) &:= |\{1 \leq j \leq N : h_j \equiv \tau \pmod{p^n}\}| \\
\nu(\tau) = \nu_{\mbf{h}'}(\tau) &:= |\{1 \leq j \leq M : h_j' \equiv \tau \pmod{p^n}\}|, 
\end{align*}
and we define 
$$
T = T_{\mu,\nu} := \{\tau \in \mb{Z}/p^n\mb{Z} : \mu(\tau) + \nu(\tau) \geq 1\}.
$$ 
We will prove the following estimates.
\begin{thm}\label{thm:expsumPrimPow}
Let $p > 3$ be prime and let $n \geq 2$. Let $N,M \geq 0$ with $N+M \geq 1$, and let $\mbf{h} \in \mb{Z}^N$ and $\mbf{h}' \in \mb{Z}^M$. Let $c \in \mb{Z}/p^n\mb{Z}$, $a \in (\mb{Z}/p^n\mb{Z})^{\times}$, and let $\mu = \mu_{\mbf{h}}$, $\nu = \nu_{\mbf{h}'}$ and $T = T_{\mbf{h},\mbf{h}'}$ be defined as above. Further, put
$$
\rho = 1_{p \leq 3|T|/2-1} + \left\lceil \frac{\log(20(N+M)^3)}{\log p} \right \rceil,
$$
and if $n > (N+M)^32^{N+M}$, assume in addition that
$$
\min\{|\tau-\tau'|_p : \tau,\tau' \in T, \tau \neq \tau'\} \geq p^{-2|T|^{-2}(\llf 2^{-|T|}n\rrf - \rho)},
$$
where $|x|_p$ denotes the $p$-adic absolute value of $x \in \mb{Q}$. Then
\begin{align*}
&\sum_{b \pmod{p^n}} e_{p^n}(cb) \prod_{1 \leq i \leq N} K_2(a,b+h_i; p^n) \prod_{1 \leq j \leq M} \bar{K}_2(a,b+h_j';p^n) \\
&\ll_{N,M,\e} p^{(N+M+2)n/2} 
\begin{cases} 
p^{-\frac{1}{n^2(n-1)^2}+\e} &\text{ if $n \leq (N+M)^32^{N+M}$} \\
p^{-n2^{-N-M}+1} 
&\text{ if $n > (N+M)^32^{N+M}$,}
\end{cases} 
\end{align*}
unless either: 
\begin{enumerate}[(i)]
\item $p \equiv 2 \pmod{3}$, $c = 0$ and $\mu(\tau) = \nu(\tau)$ for all $\tau \in T$,
\item $p \equiv 1 \pmod{3}$, $c = 0$ and $3|(\mu(\tau)-\nu(\tau))$ for all $\tau \in T$, or 
\item $p \equiv 1 \pmod{3}$, $p^{n-1}||c$ and $3|(\mu(\tau)-\nu(\tau))$; in this case, the bound is $O_{N,M}(p^{(N+M+2)n/2-1/2})$.
\end{enumerate}
\end{thm}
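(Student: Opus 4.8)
The plan is to reduce the correlation sum over $b \pmod{p^n}$ to an explicit exponential sum via Lemma \ref{lem:expK2}, and then to apply the Vinogradov mean value theorem (in the regime where $n$ is small relative to $p$) or an iterated stationary phase argument \`a la Mili\'cevi\'c--Zhang (when $n$ is large). First I would substitute the formula of Lemma \ref{lem:expK2} for each factor $K_2(a,b+h_i;p^n)$ and $\bar K_2(a,b+h_j';p^n)$. Each factor contributes a leading term $p^{n/2}$, a unimodular Gauss-type constant $\left(\tfrac{3a}{p^n}\right)\e_{p,n}$ (which, because $a$ is fixed, only affects the exceptional-case analysis through $p \bmod 3$ and whether the phase is constant), and an inner sum over the solutions $y$ of $y^3 \equiv \bar{2a}(b+h_i) \pmod{p^{\lfloor n/2\rfloor}}$ of $e_{p^n}(3ay^2)$. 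Collecting the leading factors gives $p^{(N+M)n/2}$, which together with the $p^n$ from the outer averaging matches the $p^{(N+M+2)n/2}$ in the target bound; everything then reduces to showing the remaining exponential sum over $b$ and the tuples $(y_i),(y_j')$ of cube-root variables saves a power of $p$, unless we are in one of the exceptional cases.

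The next step is to change variables: rather than summing over $b$ and then solving for the $y$'s, I would sum over a base cube root $y_0$ (say attached to $\tau_0 \in T$) and express the other roots $y_i$ in terms of $y_0$ via $y_i^3 - y_0^3 \equiv \bar{2a}(h_i - h_0) \pmod{p^{\lfloor n/2\rfloor}}$, using Hensel lifting and the hypothesis on $\min\{|\tau - \tau'|_p\}$ to guarantee the relevant cube-root extractions behave well and the variety of solutions is essentially a translate of a fixed set. After this the phase becomes a polynomial in $y_0$ (and the fixed separation data $h_i - h_0$) modulo $p^n$; in the small-$n$ regime this polynomial has degree $O(1)$ in the main variable but the modulus $p^n$ is large, so I would dyadically decompose, reduce to counting solutions of a Vinogradov-type system, and invoke the resolution of the main conjecture (Bourgain--Demeter--Guth \cite{BDG}, Wooley \cite{Woo}) to obtain the saving $p^{-1/(n^2(n-1)^2)+\e}$. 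The exceptional cases $c = 0$ (resp. $p^{n-1}\|c$) with the congruence conditions on $\mu(\tau)-\nu(\tau)$ are exactly those where the $p \bmod 3$-dependent Gauss constants and the additive phase conspire to make the full summand nonnegative (or the cube-root count degenerates to a single point), so no cancellation can be expected and one falls back on the trivial bound, matching case (iii)'s $O(p^{(N+M+2)n/2 - 1/2})$.

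For the regime $n > (N+M)^3 2^{N+M}$ the polynomial phase has degree comparable to $n$, so Vinogradov/Weyl methods are too lossy; instead I would iterate the $p$-adic method of stationary phase (as in Mili\'cevi\'c--Zhang \cite{MilZhan}): at each stage, extract the critical-point contribution, which shrinks the effective modulus by a bounded factor and replaces the sum by one over a lower-dimensional ``generically trivial'' variety with a controlled error term of size roughly $p^{1/2}$ times the previous main term, and repeat $\sim \log_2 n$ times until the residual variety is a single point or empty. The bookkeeping here — tracking that the separation hypothesis $\min\{|\tau-\tau'|_p\} \geq p^{-2|T|^{-2}(\lfloor 2^{-|T|}n\rfloor - \rho)}$ survives each iteration, that $\rho$ (the correction coming from small $p$ relative to $3|T|/2$ and from clearing the $(N+M)^3$ combinatorial factors) is large enough to keep Hensel's lemma applicable, and that the accumulated errors sum to the stated $p^{-n2^{-N-M}+1}$ — is the main obstacle; this is where the bulk of the technical work lies, and it is precisely why the authors remark that the prime-power bounds are poorer and $\delta$ in Theorem \ref{thm:notSfree} is left inexplicit. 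Throughout, I would keep the $p \bmod 3$ case distinction visible, since whether $3 \mid (p-1)$ controls both the number of cube roots (three versus one) and the value of the Gauss constant, and hence which of cases (i)--(iii) can occur.
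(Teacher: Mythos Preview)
Your high-level plan --- expand via Lemma~\ref{lem:expK2}, then apply Vinogradov's method for small $n$ and Mili\'cevi\'c--Zhang for large $n$ --- matches the paper. But the organizing step you propose (sum over a base cube root $y_0$ and express the other $y_i$ via $y_i^3 - y_0^3 \equiv \bar{2a}(h_i-h_0)$) is not what the paper does, and as stated it does not cleanly work: the relation $y_i^3 = y_0^3 + \bar{2a}(\tau_i-\tau_0)$ does not let you write $y_i$ as a $p$-adic power series in $y_0$ unless $p\mid(\tau_i-\tau_0)$, and in any case you must still sum over the $d_p$ cube-root branches for each factor. The paper instead keeps $b$ as the summation variable, fixes a single branch $s(\cdot)$ of the cube root, and absorbs the sum over branch choices into a weight $\phi(\mbf{\eps})$ supported on tuples $\mbf{\eps}=(\eps_\tau)_{\tau\in T}$ with $\eps_\tau = \sum_i u_0^{j_i(\tau)} - \sum_{i'} u_0^{j'_{i'}(\tau)}$. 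The phase then becomes
\[
f_{T,\mbf{\eps}}(b) = bc + 3a\sum_{\tau\in T}\eps_\tau\, s(\bar{2a}(b+\tau))^2,
\]
and the whole proof is organized around the dichotomy $\mbf{\eps}=\mbf{0}$ versus $\mbf{\eps}\neq\mbf{0}$.

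This decomposition is what drives the exceptional-case analysis, which you have mischaracterized. The degenerate cases are not that ``the summand becomes nonnegative'' (that is a prime-modulus heuristic): they are precisely the $\mbf{\eps}=\mbf{0}$ contribution, where $f_{T,\mbf{0}}(b)=cb$ is linear and the sum over $b$ is $p^{n-1}1_{p^{n-1}\mid c}$ times a short character sum over $d$ satisfying $(\Diamond)$. One needs a separate lemma (Lemma~\ref{lem:pval} on $\nu_p(\alpha_1+\alpha_2 u_0+\alpha_3 u_0^2)$ and Lemma~\ref{lem:zeroTerm}) to show that $\phi(\mbf{0})\neq 0$ forces $3\mid(\mu(\tau)-\nu(\tau))$ when $p\equiv 1\pmod 3$, or $\mu(\tau)=\nu(\tau)$ when $p\equiv 2\pmod 3$; and a Weil-type bound (Lemma~\ref{lem:DiamBd}) for the sum over $d$ produces the $p^{-1/2}$ in case~(iii).

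For $\mbf{\eps}\neq\mbf{0}$ and small $n$, the paper writes $b=d+pt$ and expands $s(\bar{2a}(b+\tau))^2$ as a $p$-adic binomial series in $t$, obtaining a polynomial $P_{\mbf{\eps},d}(t)$ of degree $n-1$ (not $O(1)$, though $n\ll_{N,M}1$ here). Vinogradov's method then gives the saving, but only after a nontrivial auxiliary result (Lemma~\ref{lem:RicRoy}) bounding the number of $d$ for which the effective degree of $P_{\mbf{\eps},d}$ drops to $0$ or $1$ --- a step you do not mention and which is essential. For large $n$, the paper invokes \cite[Propositions~8--9]{MilZhan} as a black box to show the critical-point set $\mc{E}_{|T|}$ for $f_{T,\mbf{\eps}}$ is empty under the separation hypothesis; the $2^{-|T|}$ exponent comes from the structure of that argument, and checking $\rho$ is admissible requires Lemma~\ref{lem:nuPbinom} on $\nu_p\binom{2/3}{j}$ together with Lemma~\ref{lem:pval} to control $\min_\tau\nu_p(\eps_\tau)$.
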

\begin{rem}
The condition $3|(\mu(\tau)-\nu(\tau))$ in (ii) of the above statement is the same condition that arose in connection with the trivial bound in Theorem \ref{thm:boundCorrCompl}. This condition, along with (i) and (iii), arise in this context for the following reason. Modulo prime powers $p^n$ with $n \geq 2$, a complete sum $K_2(a,b;p^n)$ may be explicitly evaluated as a sum over the set of critical points of a certain $p$-adic phase function. The correlations of the $K_2$ sums therefore expands as a linear combination of exponential sums modulo $p^n$, each with a fixed frequency. The trivial bound cannot be improved whenever one of these frequencies is zero, and this occurs precisely in the degenerate cases listed in the statement of Theorem \ref{thm:expsumPrimPow} (this is treated, in Proposition \ref{prop:epsZero} below). 
\end{rem}

The remainder of the section is devoted to proving this theorem.
\subsection{Preparation}
Fix $a \in (\mb{Z}/p^n\mb{Z})^{\times}$, $c \in \mb{Z}/p^n\mb{Z}$, $\mbf{h}\in \mb{Z}^N$ and $\mbf{h}' \in \mb{Z}^M$, and define $\mu,\nu$ and $T$ as above. Set
$$
S_{p^n}(\mbf{h},\mbf{h}';c,a) := \sum_{b \pmod{p^n}} e_{p^n}(cb) \prod_{1 \leq i \leq N} K_2(a,b+h_i; p^n) \prod_{1 \leq j \leq M} \bar{K}_2(a,b+h_j';p^n).
$$
We can rewrite this expression as 
\begin{align*}
S_{p^n}(\mbf{h},\mbf{h}';c,a) &= \sum_{d \pmod{p}}  \sum_{b \pmod{p^n} \atop b \equiv d \pmod{p}} e_{p^n}(cb)\prod_{\tau \in T} K_2(a,b+\tau;p^n)^{\mu(\tau)} \bar{K}_2(a,b+\tau;p^n)^{\nu(\tau)} \\
&=: \sum_{d \pmod{p}} \mc{S}_{p^n}(c,\mu,\nu;a,d).
\end{align*}

We deduce from Lemma \ref{lem:expK2} that unless $4a^2(b+\tau) \equiv 4a^2(d+\tau)  \in (\mb{Z}/p\mb{Z})^{\times 3}$ (i.e., the cube of a residue class prime to $p$) for all $\tau \in T$ we get that $\mc{S}_{p^n}(c,\mu,\nu;a,d) = 0$. We will henceforth assume the condition
$$
(\Diamond) : \ \ 4a^2 (d+\tau) \in (\mb{Z}/p\mb{Z})^{\times 3} \text{ for all $\tau \in T$,}
$$
which, as $T$ and $a$ are fixed, depends only on $d$. \\
Now, depending on whether $p \equiv 1 \pmod{3}$ or $p \equiv 2 \pmod{3}$, if $d$ satisfies $(\Diamond)$ then there are either:
\begin{itemize}
\item exactly 3 critical points, when $p \equiv 1 \pmod{3}$, or 
\item exactly one critical point, when $p\equiv 2 \pmod{3}$. 
\end{itemize}
Letting $u_0$ be a primitive cube root modulo $p^{\llf n/2\rrf}$ (by Hensel's lemma, this is necessarily a lift of a primitive cube root modulo $p$), we may define a fixed branch of the cube root $r \mapsto s(r)$ such that $s(r)^3 \equiv r \pmod{p}$ whenever $r \in (\mb{Z}/p\mb{Z})^{\times 3}$, and lift this branch to a branch of cube root modulo $p^{\llf n/2\rrf}$ as well; then, if $x^3 \equiv r \pmod{p^{\llf n/2\rrf}}$ we obtain that 
$$
x \equiv s(r)u_0^j \pmod{p^m}  \text{ for some } 0 \leq j \leq d_p-1, 
$$
setting $d_p = 1$ if $p \equiv 2 \pmod{3}$ and $d_p = 3$ if $p \equiv 1 \pmod{3}$. 
In this way, we can write (in the notation of Lemma \ref{lem:expK2})
\begin{align*}
K_2(a,b+\tau;p^{n}) = \left(\frac{3a}{p^n}\right) \e_{p,n}p^{n/2} \sum_{0 \leq j \leq d_p-1} e_{p^{n}}(3as(\bar{2a}(b+\tau))^2u_0^{j}).
\end{align*}
Fix $d$ satisfying $(\Diamond)$. For ease of notation, in the sequel we will write 
$$
\mc{S}^{\ast}_{p^n}(c,\mu,\nu;a,d) = \left(\frac{3a}{p^n}\right)^{N+M} \bar{\e_{p,n}}^{N+M}\mc{S}_{p^n}(c,\mu,\nu;a,d),
$$
and study $\mc{S}_{p^n}^{\ast}$ as $d$ varies. \\
For each $\tau \in T$ set 
\begin{align*}
U(\tau) := \{0,\ldots,d_p-1\}^{\mu(\tau)}, \quad \quad \quad \quad V(\tau) := \{0,\ldots,d_p-1\}^{\nu(\tau)},
\end{align*}
as well as
\begin{align*}
\mc{U} := \prod_{\tau \in T} U(\tau),  \quad \quad \quad \quad
\mc{V} := \prod_{\tau \in T} V(\tau).
\end{align*}
With these notations, we can write
\begin{align*}
&\mc{S}^{\ast}_{p^{n}}(c,\mu,\nu,a,d) \\
&= p^{(N+M)n/2} \sum_{\substack{\mbf{j} = (\mbf{j}(\tau))_{\tau \in T} \in \mc{U} \\ \mbf{j}' = (\mbf{j}'(\tau))_{\tau \in T} \in \mc{V}}} \sum_{b \pmod{p^{n}} \atop b \equiv d \pmod{p}}e_{p^{n}}\left(bc + 3a \sum_{\tau \in T} s(\bar{2a}(b+\tau))^2\left(\sum_{1 \leq i \leq \mu(\tau)} u_0^{j_i(\tau)} - \sum_{1 \leq i' \leq \nu(\tau)} u_0^{j'_{i'}(\tau)}\right)\right).
\end{align*}
For notational convenience, we simplify the above expression further as follows. Given $\mbf{\eps} = (\eps_{\tau})_{\tau \in T} \in (\mb{Z}/p^{n}\mb{Z})^{|T|}$ and $b \in \mb{Z}/p^n\mb{Z}$, define
\begin{align}\label{eq:fTeDef}
f_{T,\mbf{\eps}}(b) := bc + 3a\sum_{\tau \in T} \eps_{\tau} s(\bar{2a}(b+\tau))^2.
\end{align}
Provided $d$ satisfies $(\Diamond)$, we may thus write
\begin{align*}
\mc{S}^{\ast}_{p^n}(c,\mu,\nu;a,d) = p^{(N+M)n/2} \sum_{\mbf{\eps} \in (\mb{Z}/p^{n} \mb{Z})^{|T|}} \phi(\mbf{\eps}) \sum_{b \pmod{p^{n}} \atop b \equiv d \pmod{p}} e_{p^{n}}(f_{T,\mbf{\eps}}(b)),
\end{align*}
where we have set
$$
\phi(\mbf{\eps}) := \left|\left\{(\mbf{j},\mbf{j}') \in \mc{U} \times \mc{V} : \eps_{\tau} = \sum_i u_0^{j_i(\tau)} - \sum_{i'} u_0^{j'_{i'}(\tau)} \text{ for all $\tau \in T$} \right\}\right| \geq 0,
$$
for each $\mbf{\eps} \in (\mb{Z}/p^n\mb{Z})^{|T|}$. Note that 
$$
\sum_{\mbf{\eps} \in (\mb{Z}/p^n \mb{Z})^{|T|}} \phi(\mbf{\eps}) = |\mc{U}||\mc{V}| \ll 3^{N+M},
$$
and if $p \equiv 2 \pmod{3}$ then $\phi(\mbf{\eps}) = 1$ if $\eps_{\tau} = \mu(\tau)-\nu(\tau)$ for all $\tau \in T$, and 0 otherwise. \\
We now separate the $\mbf{\eps} = \mbf{0}$ term from the remaining $\mbf{\eps}$, so that\footnote{We use the notation $\sideset{}{^{\Diamond}}\sum_{d \pmod{p}}$ to denote a sum over $d \pmod{p}$ satisfying $(\Diamond)$ (for $a$ and $T$ fixed).}
\begin{align*}
\sideset{}{^{\Diamond}}\sum_{d \pmod{p}}\mc{S}^{\ast}_{p^n}(c,\mu,\nu;a,d) &= \sideset{}{^{\Diamond}}\sum_{d \pmod{p}} \mc{S}^{\ast, \neq \mbf{0}}_{p^n}(c,\mu,\nu;a,d) + \phi(\mbf{0}) p^{(N+M)n/2}  \sideset{}{^{\Diamond}}\sum_{d \pmod{p}} \sum_{b \pmod{p^n} \atop b \equiv d \pmod{p}} e_{p^n}(f_{T,\mbf{0}}(b)) \\
&=: \sideset{}{^{\Diamond}}\sum_{d \pmod{p}} \mc{S}^{\ast, \neq \mbf{0}}_{p^n}(c,\mu,\nu;a,d) + \sideset{}{^{\Diamond}}\sum_{d \pmod{p}} \mc{S}^{\ast, \mbf{0}}_{p^n}(c,\mu,\nu;a,d).
\end{align*}

\subsection{The $\mbf{\eps} = \mbf{0}$ Terms}
The contribution to $S_{p^n}(\mbf{h},\mbf{h}';c,a)$ from $\mbf{\eps} = \mbf{0}$ can be estimated as follows.
\begin{prop}\label{prop:epsZero}
With the above notation, we have
\begin{align*}
\sideset{}{^{\Diamond}}\sum_{d \pmod{p}} \mc{S}^{\ast, \mbf{0}}_{p^n}(c,\mu,\nu;a,d) = 0
\end{align*}
unless $p^{n-1}|c$ and $3|(\mu(\tau)-\nu(\tau))$ for all $\tau \in T$. In this latter case, the following non-trivial bounds hold: 
\begin{align*}
\sideset{}{^{\Diamond}}\sum_{d \pmod{p}} \mc{S}^{\ast, \mbf{0}}_{p^n}(c,\mu,\nu;a,d) \ll_{N,M} p^{(N+M+2)n/2} \cdot
\begin{cases} 
p^{-1/2} &: p \equiv 1 \pmod*{3}, c \neq 0, 3|(\mu(\tau)-\nu(\tau)) \forall \tau \in T \\
p^{-1} &: p \equiv 2 \pmod*{3}, c \neq 0, \mu(\tau) = \nu(\tau) \forall \tau \in T.
\end{cases}
\end{align*}
%
%
\end{prop}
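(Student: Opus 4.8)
The plan is to exploit the fact that taking $\mbf{\eps}=\mbf{0}$ collapses the phase $f_{T,\mbf{0}}$ to the linear function $b\mapsto bc$, so that the inner sum over $b$ becomes a complete geometric sum. First I would record, for every $d$ satisfying $(\Diamond)$,
$$\mc{S}^{\ast,\mbf{0}}_{p^n}(c,\mu,\nu;a,d)=\phi(\mbf{0})\,p^{(N+M)n/2}\sum_{\substack{b\bmod p^n\\ b\equiv d\bmod p}}e_{p^n}(cb),$$
and evaluate the inner sum by writing $b=d+pt$ with $t$ running over $\mb{Z}/p^{n-1}\mb{Z}$: it equals $e_{p^n}(cd)\,p^{n-1}$ if $p^{n-1}\mid c$, and vanishes otherwise. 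This already gives the first half of the statement, since if $p^{n-1}\nmid c$ then each summand is $0$. When $p^{n-1}\mid c$, writing $c=p^{n-1}\gamma$ with $\gamma\in\mb{Z}/p\mb{Z}$, summation over the admissible $d$ yields
$$\sideset{}{^{\Diamond}}\sum_{d\bmod p}\mc{S}^{\ast,\mbf{0}}_{p^n}(c,\mu,\nu;a,d)=\phi(\mbf{0})\,p^{(N+M)n/2+n-1}\,W(\gamma),\qquad W(\gamma):=\sideset{}{^{\Diamond}}\sum_{d\bmod p}e_p(\gamma d),$$
where $\phi(\mbf{0})\le d_p^{N+M}\le 3^{N+M}$, so no large constants enter.

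Next I would pin down when $\phi(\mbf{0})\neq 0$. For $p\equiv 2\pmod 3$ we have $d_p=1$, so $\phi(\mbf{0})\neq 0$ forces $\mu(\tau)-\nu(\tau)\equiv 0$ and hence $\mu(\tau)=\nu(\tau)$ for all $\tau$, as already noted in the setup. For $p\equiv 1\pmod 3$, each $\sum_i u_0^{j_i(\tau)}$ with $j_i(\tau)\in\{0,1,2\}$ equals $\alpha+\beta u_0+\gamma u_0^2$ for nonnegative integers $\alpha,\beta,\gamma$ with $\alpha+\beta+\gamma=\mu(\tau)$, which by $u_0^2\equiv -1-u_0$ equals $(\alpha-\gamma)+(\beta-\gamma)u_0$; subtracting the analogous expression for the $\mbf{j}'$ variables, the relation defining $\phi(\mbf{0})$ reads $A_\tau+B_\tau u_0\equiv 0$ with $|A_\tau|,|B_\tau|\le N+M$, and eliminating $u_0$ through $u_0^2+u_0+1\equiv 0$ gives $A_\tau^2-A_\tau B_\tau+B_\tau^2\equiv 0$ to the same precision. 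Since this quadratic form is positive definite and at most $3(N+M)^2$, for $p$ larger than an explicit constant depending on $N+M$ we get $A_\tau=B_\tau=0$, and unwinding the counts of the $j_i(\tau)$ equal to $0,1,2$ this forces $3\mid(\mu(\tau)-\nu(\tau))$. The finitely many remaining small primes contribute only finitely many moduli $p^n$ and are disposed of by a crude bound. Combined with the previous paragraph, this proves that the sum vanishes unless $p^{n-1}\mid c$ and $3\mid(\mu(\tau)-\nu(\tau))$ for all $\tau\in T$.

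Finally, for the quantitative estimates I would bound $W(\gamma)$ for $\gamma\neq 0$ (the case $\gamma=0$ is a genuine main term for which no saving is asserted). If $p\equiv 2\pmod 3$, every nonzero residue is a cube, so $(\Diamond)$ merely excises the $\le|T|$ classes $d\equiv-\tau\pmod p$ from a complete additive sum; hence $W(\gamma)=-\sum_{\tau\in T}e_p(-\gamma\tau)$, $|W(\gamma)|\le|T|\ll_{N,M}1$, and the claimed bound $\ll_{N,M}p^{(N+M+2)n/2-1}$ follows. If $p\equiv 1\pmod 3$, fix a character $\chi$ of $\F_p^\times$ of order $3$ and expand $\mathbf 1_{(\Diamond)}=\prod_{\tau}\tfrac13\bigl(\chi_0+\chi+\bar\chi\bigr)\!\bigl(4a^2(d+\tau)\bigr)$ over the distinct residues $\tau\bmod p$; the all-$\chi_0$ term contributes $O_{N,M}(1)$ (a bounded multiple of $\sum_\tau e_p(-\gamma\tau)$, the complete sum $\sum_d e_p(\gamma d)$ vanishing since $\gamma\neq 0$), while every other term is a bounded multiple of a mixed character sum $\sum_{d\in\F_p}e_p(\gamma d)\prod_{\tau}\chi^{e_\tau}(d+\tau)$. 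Its attached rank-one $\ell$-adic sheaf is a nontrivial Kummer sheaf (ramified at some $-\tau$, since at least one $e_\tau\not\equiv 0$ and the $\tau$ are distinct) tensored with $\Lc_{\psi(\gamma X)}$, hence geometrically nontrivial, pointwise pure of weight $0$, with $\dim H^1_c\ll_{N,M}1$; by Deligne's Riemann Hypothesis each such term is $O_{N,M}(\sqrt p)$, so $|W(\gamma)|\ll_{N,M}\sqrt p$ and the bound $\ll_{N,M}p^{(N+M+2)n/2-1/2}$ follows.

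The two steps demanding the most care are the combinatorial input that $\phi(\mbf{0})\neq 0$ forces $3\mid(\mu(\tau)-\nu(\tau))$ — where one must track the precision to which $u_0$ is a cube root of unity and dispose of small $p$ — and the Weil-type estimate for the twisted cubic-character sums when $p\equiv 1\pmod 3$, the point being that a non-all-trivial choice of the $\chi^{e_\tau}$ precisely prevents the relevant Kummer sheaf from being geometrically trivial.
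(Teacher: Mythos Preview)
Your proof is correct and takes essentially the same approach as the paper's: collapse to the linear phase $f_{T,\mbf{0}}(b)=cb$, characterize $\phi(\mbf{0})\neq 0$ combinatorially (the paper's Lemmas~\ref{lem:pval} and~\ref{lem:zeroTerm} cube to exploit $u_0^3\equiv 1$ where you instead use the norm form $A^2-AB+B^2$, an equivalent manoeuvre), and bound the residual sum $W(\gamma)$ over $d$ by expanding the cubic-residue indicator in characters and invoking Weil (this is the paper's Lemma~\ref{lem:DiamBd}). The small-$p$ caveat you flag is handled in the paper the same way, via an explicit size hypothesis $p^n>20(N+M)^3$ in Lemma~\ref{lem:zeroTerm}.
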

To prove this result, we need several lemmas.
\begin{lem} \label{lem:pval}
Let $p \equiv 1 \pmod{3}$ be prime, and let $n \geq 2$ and $D \geq 1$. Let $\alpha_1,\alpha_2,\alpha_3 \in \mb{Z}$ with $\max\{|\alpha_1|,|\alpha_2|,|\alpha_3|\} \leq D$. Assume that $p^{n/2} > 20D^3$ and let $1 \leq u_0 \leq p^{\llf n/2 \rrf}-1$ be a primitive cube root modulo $p^{\llf n/2 \rrf}$. Then either $\alpha_1 = \alpha_2 = \alpha_3$ or else
$$
\nu_p(\alpha_1 + \alpha_2u_0 + \alpha_3 u_0^2) \leq \left \lceil \frac{\log(20D^3)}{\log p}\right \rceil.
$$
\end{lem}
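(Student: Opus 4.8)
The plan is to exploit the fact that $u_0$, being a primitive cube root of unity modulo $p^{\lfloor n/2\rfloor}$, behaves like a complex primitive cube root $\omega$: because $p\equiv 1\pmod 3$, a primitive cube root mod $p^{\lfloor n/2\rfloor}$ reduces to a primitive cube root mod $p$, so $u_0\not\equiv 1\pmod p$ and hence $u_0-1$ is a unit modulo $p^{\lfloor n/2\rfloor}$; since $p^{\lfloor n/2\rfloor}\mid u_0^3-1=(u_0-1)(u_0^2+u_0+1)$, this forces
$$
1+u_0+u_0^2\equiv 0\pmod{p^{\lfloor n/2\rfloor}}.
$$
The central step is then a ``norm'' identity: setting $T:=\alpha_1+\alpha_2u_0+\alpha_3u_0^2$ and its conjugate $T':=\alpha_1+\alpha_2u_0^2+\alpha_3u_0$ (the analogous sum with the other primitive cube root $u_0^2$ in place of $u_0$), a direct computation using only $u_0^3\equiv 1$ and $1+u_0+u_0^2\equiv 0\pmod{p^{\lfloor n/2\rfloor}}$ gives
$$
T\cdot T'\equiv \alpha_1^2+\alpha_2^2+\alpha_3^2-\alpha_1\alpha_2-\alpha_2\alpha_3-\alpha_3\alpha_1=:N\pmod{p^{\lfloor n/2\rfloor}},
$$
which is nothing but the reduction mod $p^{\lfloor n/2\rfloor}$ of the norm of $\alpha_1+\alpha_2\omega+\alpha_3\omega^2$ from $\Z[\omega]$ to $\Z$. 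Since $N=\tfrac12\big[(\alpha_1-\alpha_2)^2+(\alpha_2-\alpha_3)^2+(\alpha_3-\alpha_1)^2\big]$, we have $N=0$ exactly when $\alpha_1=\alpha_2=\alpha_3$ (the excluded case), and otherwise $1\le N\le 6D^2\le 20D^3$.

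Assuming the $\alpha_i$ are not all equal, the next step is to check that $p^{\lfloor n/2\rfloor}>N$. Indeed $p^{2\lfloor n/2\rfloor}\ge p^{n-1}=p^n/p>(20D^3)^2/p$ by the hypothesis $p^{n/2}>20D^3$, whereas $N\le 6D^2$; if $p^{\lfloor n/2\rfloor}\le N$ held, then squaring and combining would give $p>\tfrac{100}{9}D^2>6D^2\ge N\ge p^{\lfloor n/2\rfloor}\ge p$, a contradiction. Consequently $\nu_p(N)<\lfloor n/2\rfloor$, so in particular $N\not\equiv 0\pmod{p^{\lfloor n/2\rfloor}}$ (hence $T,T'\ne 0$), and the congruence $TT'\equiv N$ upgrades to the exact valuation identity $\nu_p(TT')=\nu_p(N)$ (two integers congruent mod $p^{\lfloor n/2\rfloor}$ with one of valuation $<\lfloor n/2\rfloor$ have equal valuation). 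Finally,
$$
\nu_p(T)\le \nu_p(T)+\nu_p(T')=\nu_p(TT')=\nu_p(N)\le \frac{\log N}{\log p}\le \frac{\log(20D^3)}{\log p},
$$
and since $\nu_p(T)$ is an integer this yields $\nu_p(T)\le\big\lceil \log(20D^3)/\log p\big\rceil$, as claimed.

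\textbf{Expected obstacle.} The argument is short and each individual step is routine; the only genuinely substantive point — and the one I would expect to require the most care — is justifying that the purely formal norm identity over $\Z[\omega]$ descends correctly to $\Z/p^{\lfloor n/2\rfloor}\Z$, i.e. that $u_0$ and $u_0^2$ really do play the roles of the two roots of $X^2+X+1$ there and that $1+u_0+u_0^2$ vanishes modulo $p^{\lfloor n/2\rfloor}$ (not merely modulo $p$), together with the bookkeeping needed to convert the resulting congruence into the valuation bound via $p^{\lfloor n/2\rfloor}>N$. No deep algebraic input is needed beyond Hensel's lemma for the cube roots of unity.
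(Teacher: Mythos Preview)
Your proof is correct and takes a genuinely different route from the paper's. Both arguments start from the same key fact, that $1+u_0+u_0^2\equiv 0\pmod{p^{\lfloor n/2\rfloor}}$ because $u_0\not\equiv 1\pmod p$, but then diverge. The paper subtracts off $\alpha_3(1+u_0+u_0^2)$ to reduce to a linear relation $(\alpha_1-\alpha_3)\equiv -u_0(\alpha_2-\alpha_3)\pmod{p^r}$, cubes to get $(\alpha_1-\alpha_3)^3+(\alpha_2-\alpha_3)^3\equiv 0\pmod{p^r}$, and then uses the size bound $|\cdot|<20D^3\le p^r$ to force an exact equality over $\mb{Z}$, finishing with a short case check. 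You instead multiply $T$ by its ``Galois conjugate'' $T'=\alpha_1+\alpha_2u_0^2+\alpha_3u_0$ to obtain $TT'\equiv N\pmod{p^{\lfloor n/2\rfloor}}$ with $N=\tfrac12\sum(\alpha_i-\alpha_j)^2$ the $\mb{Z}[\omega]$-norm, observe $1\le N\le 6D^2$, verify $p^{\lfloor n/2\rfloor}>N$ from the hypothesis, and read off $\nu_p(T)\le\nu_p(TT')=\nu_p(N)\le\log(20D^3)/\log p$.

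Your approach is more structural and slightly slicker: it packages the computation as a single norm identity and bypasses the case analysis at the end of the paper's argument (showing $u_0\not\equiv 1\pmod{p^r}$). The paper's approach, on the other hand, works directly modulo $p^r$ rather than modulo $p^{\lfloor n/2\rfloor}$ and so needs only $p^r\ge 20D^3$ rather than your verification that $p^{\lfloor n/2\rfloor}>6D^2$; your contradiction chain for the latter is correct but a touch delicate in the odd-$n$ case. Either way, both proofs are short and elementary, and yours would be a perfectly acceptable substitute.
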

\begin{proof}
Set $r := \left\lceil\frac{\log(20D^3)}{\log p}\right\rceil \leq n/2$. Assume the claim is false, so that $p^r|(\alpha_1+\alpha_2u_0 + \alpha_3 u_0^2)$ and $\alpha_1,\alpha_2,\alpha_3$ are not all the same. 
As $p \equiv 1 \pmod{3}$ and $u_0$ is a primitive cube root, we must have $u_0 \not\equiv 1 \pmod{p^{\llf n/2\rrf}}$. By assumption at least one of $\alpha_1,\alpha_2,\alpha_3$ does not vanish, and up to multiplication by $u_0^j$ for $j \in \{0,1,2\}$ (which does not change the $p$-adic valuation), we may suppose $\alpha_3$ does not. 
As $u_0^3 \equiv 1 \pmod{p^r}$, we have
\begin{align*}
\alpha_1 + \alpha_2 u_0 + \alpha_3 u_0^2 &= (\alpha_1-\alpha_3) + (\alpha_2-\alpha_3)u_0 + \alpha_3(1+u_0+u_0^2) \\
&\equiv (\alpha_1-\alpha_3) + (\alpha_2-\alpha_3)u_0 \pmod{p^r}.
\end{align*}
Now, on one hand this implies that 
\begin{equation}\label{eq:firstu0}
\alpha_1-\alpha_3 \equiv -u_0(\alpha_2-\alpha_3) \pmod{p^r}.
\end{equation}
On the other, taking cubes, we obtain that 
$$
(\alpha_1-\alpha_3)^3 + (\alpha_2-\alpha_3)^3 \equiv 0 \pmod{p^r}.
$$ 
However, as
$$
|(\alpha_1-\alpha_3)^3 + (\alpha_2-\alpha_3)^3| \leq 2\cdot (2D)^3 < 20D^3 \leq p^r,
$$ 
it follows that $(\alpha_1-\alpha_3)^3 = -(\alpha_2-\alpha_3)^3$, and therefore that $\alpha_1-\alpha_3 = -(\alpha_2-\alpha_3)$. Plugging this into the earlier congruence implies that either $\alpha_1-\alpha_3 = \alpha_2-\alpha_3 = 0$, or else $u_0 \equiv 1 \pmod{p^r}$. Since the $\alpha_j$ are not all the same by assumption, the first of these is impossible. To see that the second fails as well, note that if $u_0 \not \equiv 1 \pmod{p^{\llf n/2 \rrf}}$ but $u_0 = 1 +mp^r$ for some $m \in \mb{Z}$, then of course $p^{\llf n/2 \rrf -r} \nmid m$. On the other hand, since $u_0$ is a cube root of unity modulo $p^{\llf n/2\rrf}$,
$$
u_0^3 = (1+mp^r)^3 = 1+mp^r(3+3p^rm+m^2p^{2r}) \equiv 1 \pmod{p^{\llf n/2\rrf}},
$$
which is only possible for $p \neq 3$ if $\nu_p(m) \geq \llf n/2 \rrf - r$, a contradiction. The claim follows.
\end{proof}

\begin{lem} \label{lem:zeroTerm}
Let $T \subseteq \mb{Z}/p^{n}\mb{Z}$, and suppose $\phi(\mbf{0}) \neq 0$. Assume furthermore that $p^n > 20(N+M)^3$.\\
a) If $p\equiv 2 \pmod{3}$ then $\mu(\tau) = \nu(\tau)$ for all $\tau \in T$.\\
b) If $p \equiv 1 \pmod{3}$ then $\mu(\tau) \equiv \nu(\tau) \pmod{3}$ for all $\tau \in T$. 
\end{lem}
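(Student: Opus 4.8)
Part (a) is immediate: when $p \equiv 2 \pmod 3$ we have $d_p = 1$, so each of $\mc{U}$ and $\mc{V}$ consists of a single tuple, all of whose coordinates are $0$; hence $\sum_i u_0^{j_i(\tau)} - \sum_{i'} u_0^{j'_{i'}(\tau)} = \mu(\tau) - \nu(\tau)$ for every $\tau \in T$, and $\phi(\mbf 0) \neq 0$ can hold only if $\mu(\tau) = \nu(\tau)$ for all $\tau$.

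For part (b) the plan is first to unpack the hypothesis $\phi(\mbf 0) \neq 0$: it produces a pair $(\mbf j, \mbf j') \in \mc{U} \times \mc{V}$ with $\sum_{i=1}^{\mu(\tau)} u_0^{j_i(\tau)} \equiv \sum_{i'=1}^{\nu(\tau)} u_0^{j'_{i'}(\tau)} \pmod{p^n}$ for every $\tau \in T$, all exponents lying in $\{0,1,2\}$ since $d_p = 3$. Fix $\tau$ and collect terms by the power of $u_0$: writing $\alpha_k$ (resp.\ $\beta_k$) for the number of $i$ (resp.\ $i'$) with $j_i(\tau) = k$ (resp.\ $j'_{i'}(\tau) = k$), so that $\alpha_0 + \alpha_1 + \alpha_2 = \mu(\tau)$ and $\beta_0 + \beta_1 + \beta_2 = \nu(\tau)$, the congruence becomes $\gamma_0 + \gamma_1 u_0 + \gamma_2 u_0^2 \equiv 0 \pmod{p^n}$ with $\gamma_k := \alpha_k - \beta_k$, so that $|\gamma_k| \leq N+M$ and $\gamma_0 + \gamma_1 + \gamma_2 = \mu(\tau) - \nu(\tau)$. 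It then suffices to prove $\gamma_0 = \gamma_1 = \gamma_2$, since this forces $3 \mid (\mu(\tau)-\nu(\tau))$; running this over all $\tau \in T$ gives the lemma.

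To prove $\gamma_0 = \gamma_1 = \gamma_2$ I would argue via $p$-adic valuations (this is essentially what Lemma~\ref{lem:pval} packages, and one could cite it directly with $D = N+M$, its hypotheses being met because $p^n > 20(N+M)^3$ and $u_0$ is a primitive cube root of unity modulo $p^n$). Concretely: since $u_0^2 + u_0 + 1 \equiv 0 \pmod{p^n}$, the congruence rewrites as $(\gamma_0 - \gamma_2) + (\gamma_1 - \gamma_2) u_0 \equiv 0 \pmod{p^n}$. If $\gamma_1 = \gamma_2$ then $\gamma_0 \equiv \gamma_2 \pmod{p^n}$, hence $\gamma_0 = \gamma_2$ because $|\gamma_0 - \gamma_2| \leq N+M < p^n$, and we are done; similarly if $\gamma_0 = \gamma_2$. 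Otherwise, comparing valuations in the displayed relation (note $\nu_p(u_0) = 0$) forces $\nu_p(\gamma_0 - \gamma_2) = \nu_p(\gamma_1 - \gamma_2) =: a$; cancelling $p^a$ and applying $u_0^2 + u_0 + 1 \equiv 0$ once more gives $p^{n-a} \mid \gamma_0'^2 - \gamma_0'\gamma_1' + \gamma_1'^2$ where $\gamma_0' := (\gamma_0 - \gamma_2)/p^a$, $\gamma_1' := (\gamma_1 - \gamma_2)/p^a$, and $p \nmid \gamma_1'$. But $0 \leq \gamma_0'^2 - \gamma_0'\gamma_1' + \gamma_1'^2 \leq 3(N+M)^2$, while $p^{n-a} \geq p^n/p^a \geq p^n/(N+M) > 20(N+M)^2$ (using $p^a \leq |\gamma_0 - \gamma_2| \leq N+M$); hence $\gamma_0'^2 - \gamma_0'\gamma_1' + \gamma_1'^2 = 0$, forcing $\gamma_0' = \gamma_1' = 0$ and contradicting $p \nmid \gamma_1'$. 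So $\gamma_0 = \gamma_1 = \gamma_2$, as needed.

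The one genuinely delicate step is the size comparison at the end: ruling out that $\gamma_0 + \gamma_1 u_0 + \gamma_2 u_0^2$ is a small nonzero integer divisible by a large power of $p$. This is exactly where the standing hypothesis $p^n > 20(N+M)^3$ is consumed, and it is the same obstruction that Lemma~\ref{lem:pval} is built to overcome. Everything else is routine bookkeeping with the definitions of $d_p$, $\mc{U}$, $\mc{V}$ and $\phi$.
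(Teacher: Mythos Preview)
Your proof is correct and takes essentially the same approach as the paper: collect terms by powers of $u_0$, reduce to showing $\gamma_0=\gamma_1=\gamma_2$, and deduce $3\mid(\mu(\tau)-\nu(\tau))$; the paper simply cites Lemma~\ref{lem:pval} for that last step (exactly as you suggest), whereas you additionally spell out a self-contained version of the same valuation argument. One small imprecision: in the paper's setup $u_0$ is a primitive cube root modulo $p^{\lfloor n/2\rfloor}$, not modulo $p^n$, so your use of $u_0^2+u_0+1\equiv 0\pmod{p^n}$ tacitly assumes $u_0$ has been lifted (via Hensel) to a root modulo $p^n$; this is harmless but worth stating.
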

\begin{proof}
In each case, $\phi(\mbf{0}) \neq 0$ if, and only if, there exist tuples $\mbf{j} \in \mc{U}$, $\mbf{j}' \in \mc{V}$ such that 
$$
0 = \sum_{1 \leq i \leq \mu(\tau)} u_0^{j_i(\tau)} - \sum_{1 \leq i' \leq \nu(\tau)} u_0^{j'_{i'}(\tau)} \text{ for all $\tau \in T$.}
$$
a) When $p \equiv 2 \pmod{3}$, we may (trivially) take $u_0 = 1$ and $j_i(\tau) = j'_{i'}(\tau) = 0$ for all $i,i'$ and $\tau$, which leads immediately to the conclusion $\mu(\tau) = \nu(\tau)$ for all $\tau \in T$.\\
b) When $p \equiv 1 \pmod{3}$, we write the above expression as
$$
0 = \left(m_{\tau} + n_{\tau} u_0 + p_{\tau}u_0^2\right) - \left(m_{\tau}' + n_{\tau}'u_0 + p_{\tau}'u_0^2\right) = (m_{\tau} - m_{\tau}') + (n_{\tau}-n_{\tau}')u_0 + (p_{\tau} - p_{\tau}')u_0^2,
$$
where $m_{\tau},n_{\tau},p_{\tau}$ and $m_{\tau}',n_{\tau}',p_{\tau}'$ are non-negative integers satisying 
\begin{align*}
&m_{\tau} + n_{\tau}+p_{\tau} = \mu(\tau)
&m_{\tau}' + n_{\tau}'+p_{\tau}' = \nu(\tau),
\end{align*}
for all $\tau \in T$. By Lemma \ref{lem:pval}, it follows that $m_{\tau} - m_{\tau}' = n_{\tau} - n_{\tau}' = p_{\tau} - p_{\tau}' = \ell$, say. We conclude that
$$
\mu(\tau)-\nu(\tau) = \left(m_{\tau} + n_{\tau} + p_{\tau}\right) - \left(m_{\tau}' + n_{\tau}' + p_{\tau}'\right) = 3\ell,
$$
which implies the claim.
\end{proof}
\begin{lem} \label{lem:DiamBd}
Let $p > 3$ be prime, and let $C \in \mb{Z}/p\mb{Z}$. Let $A \in (\mb{Z}/p\mb{Z})^{\times}$ and let $\tilde{T} \subseteq \mb{Z}/p\mb{Z}$. \\
a) If $p \equiv 2 \pmod{3}$ then 
$$
\sum_{\substack{d \pmod{p} \\ 4A^2(d+\tau) \in (\mb{Z}/p\mb{Z})^{\times 3} \\ \forall \tau \in \tilde{T}}} e_p(dC) = p1_{C \equiv 0 \pmod{p}} + O(|\tilde{T}|).
$$
b) If $p \equiv 1 \pmod{3}$ then 
$$
\sum_{\substack{d \pmod{p} \\ 4A^2(d+\tau) \in (\mb{Z}/p\mb{Z})^{\times 3} \\ \forall \tau \in \tilde{T}}}e_p(dC) = 3^{-|\tilde{T}|}p1_{C \equiv 0 \pmod{p}} + O\left(|\tilde{T}|^2p^{1/2}\right).
$$
\end{lem}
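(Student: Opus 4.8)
\emph{Proof proposal.} The plan is to detect the cube conditions with multiplicative characters and reduce everything to classical Weil bounds for mixed (multiplicative~$\times$~additive) character sums; the two parts are genuinely different because $(\F_p^\times)^3$ has index $\gcd(3,p-1)$, equal to $1$ when $p\equiv 2\pmod 3$ and to $3$ when $p\equiv 1\pmod 3$.

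For part a), since $p\equiv 2\pmod 3$ the map $x\mapsto x^3$ is a bijection of $\F_p$, so $(\F_p^\times)^3=\F_p^\times$ and the condition ``$4A^2(d+\tau)\in(\F_p^\times)^3$ for all $\tau$'' is simply ``$d\notin\{-\tau:\tau\in\tilde{T}\}$''. As the $-\tau$ are pairwise distinct mod $p$, the sum equals $\sum_{d\pmod p}e_p(dC)-\sum_{\tau\in\tilde{T}}e_p(-C\tau)=p\cdot 1_{C\equiv 0\pmod p}+O(|\tilde{T}|)$, as claimed.

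For part b), fix a character $\chi$ of $\F_p^\times$ of order $3$ and extend each $\chi^j$ ($j=0,1,2$) to $\F_p$ by $\chi^j(0)=0$, so that $\tfrac13\sum_{j=0}^2\chi^j(x)=1_{x\in(\F_p^\times)^3}$. First restrict the $d$-sum to $\{d:d+\tau\neq 0\ \forall\tau\in\tilde{T}\}$ --- no loss, since the summands of the original sum already force each $d+\tau$ to be invertible --- then insert this detector for each $\tau$ and expand the product over $\tilde{T}$ into a sum over tuples $\mbf{j}=(j_\tau)_{\tau\in\tilde{T}}$ with each $j_\tau\in\{0,1,2\}$. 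Pulling out the unimodular constant $\chi(4A^2)^{\sum_\tau j_\tau}$, each resulting term is $3^{-|\tilde{T}|}$ times $\sum_{d:\,d+\tau\neq 0}e_p(Cd)\prod_{\tau}\chi^{j_\tau}(d+\tau)$. The diagonal term $\mbf{j}=\mbf{0}$ reproduces $3^{-|\tilde{T}|}\bigl(p\cdot 1_{C\equiv 0}+O(|\tilde{T}|)\bigr)$ exactly as in part a). For $\mbf{j}\neq\mbf{0}$ the product equals $\chi(\tilde{g}(d))$ with $\tilde{g}(X)=\prod_{\tau:\,j_\tau\geq 1}(X+\tau)^{j_\tau}$ a nonconstant polynomial; after extending the $d$-range back to all of $\F_p$ (error $O(|\tilde{T}|)$), Weil's bound gives $O(|\tilde{T}|\sqrt p)$ for each such term, uniformly in whether $C=0$ (if $C=0$ one uses the multiplicative Weil estimate, which needs precisely the non-cube property below; if $C\neq 0$ the wild ramification of $\mathcal L_\psi$ at $\infty$ already forces square-root cancellation). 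Summing the $3^{|\tilde{T}|}-1$ off-diagonal terms and dividing by $3^{|\tilde{T}|}$ yields $3^{-|\tilde{T}|}p\cdot 1_{C\equiv 0}+O(|\tilde{T}|\sqrt p)$, which is in fact stronger than the stated $O(|\tilde{T}|^2p^{1/2})$.

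The one step that deserves care --- the ``main obstacle'', such as it is --- is checking that for every $\mbf{j}\neq\mbf{0}$ the polynomial $\tilde{g}$ is not of the form $c\,h(X)^3$, so that $\chi\circ\tilde{g}$ is nontrivial and Weil's theorem actually produces cancellation; this is immediate because the exponents $j_\tau\in\{1,2\}$ cannot all be divisible by $3$ and the shifts $\tau$ are distinct (so $\tilde{g}$ has only simple and double roots). The remaining work is purely bookkeeping: keeping all implied constants uniform in $|\tilde{T}|$ (which is why the crude $O(|\tilde{T}|^2p^{1/2})$ is recorded rather than the sharper $O(|\tilde{T}|p^{1/2})$) and tracking the $O(|\tilde{T}|)$ losses from restricting and then un-restricting the range of $d$.
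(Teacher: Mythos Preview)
Your proof is correct and follows essentially the same approach as the paper's: detect the cube condition via cubic characters, isolate the $\mbf{j}=\mbf{0}$ main term, and bound each $\mbf{j}\neq\mbf{0}$ term by a Weil-type estimate after verifying the relevant character is nontrivial. The only cosmetic differences are that the paper indexes the cubic characters by $\{-1,0,1\}$ rather than $\{0,1,2\}$ and invokes the Weil bound in $\ell$-adic sheaf language (Kummer $\otimes$ Artin--Schreier, with a conductor-squared estimate yielding $O(|\tilde T|^2\sqrt{p})$ per term), whereas you cite the classical mixed-character Weil bound directly and obtain the sharper $O(|\tilde T|\sqrt{p})$; both suffice for the lemma as stated.
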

\begin{proof}
a) When $p \equiv 2 \pmod{3}$ every residue class modulo $p$ is a cube. Thus, if $d$ satisfies the condition in the sum on the left-hand side then as $p > 3$ this is equivalent to $\prod_{\tau \in \tilde{T}} (d+\tau) \in (\mb{Z}/p\mb{Z})^{\times}$, which is satisfied for all but $O(|\tilde{T}|)$ residue classes $d$ modulo $p$. Thus, the sum in question is simply
$$
\sum_{d \pmod{p}} e_p(dC) + O(|\tilde{T}|) = p1_{C \equiv 0 \pmod{p}} + O(|\tilde{T}|),
$$
as required. \\
b) Let $\Xi_3(p) := \{\chi \pmod{p} : \chi^3 = \chi_0\}$, where $\chi_0$ denotes the trivial multiplicative character modulo $p$. Since the set of multiplicative characters modulo $p$ is a cyclic group of order $p-1$ and $3|(p-1)$, we can write
$$
\Xi_3(p) = \{\xi^j : j \in \{-1,0,1\}\}, \text{ where } \xi := \chi_1^{(p-1)/3}
$$
for some fixed generator $\chi_1$ for the group of characters mod $p$.
We note that for any $b \in \mb{Z}/p\mb{Z}$,
$$
\sum_{-1 \leq j \leq 1} \xi^j(b) = \begin{cases} 3 &\text{ if $b \in (\mb{Z}/p\mb{Z})^{\times 3}$} \\ 0 &\text{ otherwise}\end{cases},
$$
and so the exponential sum in question is
\begin{align} \label{eq:allJs}
&\sum_{d \pmod{p}} e_p(dC) \prod_{\tau \in \tilde{T}} 1_{4A^2(d+\tau) \in (\mb{Z}/p\mb{Z})^{\times 3}} \nonumber \\
&= 3^{-|\tilde{T}|} \sum_{\mbf{j} \in \{-1,0,1\}^{|\tilde{T}|}} \xi(4A^2)^{t(\mbf{j})} \sum_{d \pmod{p}} \xi\left(\prod_{\tau \in \tilde{T}} (d+\tau)^{j_{\tau}}\right)e_p(dC),
\end{align}
where we have written $t(\mbf{j}) := \sum_{\tau \in \tilde{T}} j_{\tau}$. \\
When $\mbf{j} = \mbf{0}$ we get
\begin{align}\label{eq:Jzero}
&3^{-|\tilde{T}|} \sum_{d \pmod{p}} \chi_0\left(\prod_{\tau \in \tilde{T}} (d+\tau)\right) e_p(dC) \nonumber\\
&= 3^{-|\tilde{T}|} \sum_{d \pmod{p}} e_p(dC) + O(|\tilde{T}|3^{-|\tilde{T}|}) = 3^{-|\tilde{T}|}\left(p1_{C \equiv 0 \pmod{p}} + O(|\tilde{T}|)\right). 
\end{align}
For $\mbf{j} \neq \mbf{0}$, we define
\begin{align*}
g_{\mbf{j}}(d) := \prod_{\tau \in \tilde{T} \atop j_{\tau} = 1} (d+\tau),  \ \ \ \ 
h_{\mbf{j}}(d) &:= \prod_{\tau \in \tilde{T} \atop j_{\tau} = -1} (d+\tau),
\end{align*}
and what remains to be estimated is the expression
$$
3^{-|\tilde{T}|} \sum_{\substack{\mbf{j} \in \{-1,0,1\}^{|\tilde{T}|} \\ \mbf{j} \neq \mbf{0}}} \sum_{\substack{d \pmod{p} \\ p\nmid (d+\tau) \forall \tau \in T}} \xi(g_{\mbf{j}}(d) \bar{h}_{\mbf{j}}(d)) e_p(dC) \ll \max_{\mbf{j} \neq \mbf{0}} \left|\sum_{\substack{d \pmod{p} \\ p\nmid (d+\tau) \forall \tau \in T}} \xi(g_{\mbf{j}}(d) \bar{h}_{\mbf{j}}(d)) e_p(dC)\right|.
$$
We interpret the latter sum as a sum over $\mb{F}_p$, and estimate the sum using the language of $\ell$-adic trace functions, for an auxiliary prime $\ell \neq p$. For each $\mbf{j} \neq \mbf{0}$ let $\mc{K}_{\mbf{j}}$ denote the Kummer sheaf attached to $\xi \circ (g_{\mbf{j}}/h_{\mbf{j}})$, and let $\mc{L}_C$ be the Artin--Schreier sheaf attached $d \mapsto e_p(dC)$. Then the sum we must estimate is
$$
\sum_{d \in U_{\mbf{j}}(\mb{F}_p)} t_{\mc{K}_{\mbf{j}}}(d) t_{\mc{L}_C}(d) = \sum_{d \in U_{\mbf{j}}(\mb{F}_p)} t_{\mc{F}}(d),
$$
where $\mc{F} := \mc{K}_{\mbf{j}} \otimes \mc{L}_C$ and $U_{\mbf{j}}(\mb{F}_p) := \{d \in \mb{F}_p : d \neq -\tau \ \forall \tau \in T\}$. By Corollary 2.31 of \cite{PGThe}, this is 
$$
p1_{\mc{F} \text{ geom. trivial}} + O(\text{cond}(\mc{F})^2p^{1/2}).
$$
When $\mbf{j} \neq \mbf{0}$ we claim that $\mc{F}$ is not geometrically trivial, i.e., that $\mc{K}_{\mbf{j}}$ and $D(\mc{L}_{C})$ are not geometrically isomorphic. Indeed, on one hand if $C \neq 0$ then $\mc{L}_C$ (and thus $D(\mc{L}_C)$) has a lone wild ramification point at $\infty$ (with $\text{Swan}_{\infty}(\mc{L}_C) = 1$), while in contrast $\mc{K}_{\mbf{j}}$ has only tame ramification points at the zeros of $g_{\mbf{j}}h_{\mbf{j}}$ (and in particular, $\text{Swan}_{\infty}(\mc{K}_{\mbf{j}}) = 0$). If $C = 0$ then because $g_{\mbf{j}}(d)h_{\mbf{j}}(d)$ has only distinct roots and is non-constant, it is not the cube of a polynomial. Thus, $\mc{K}_{\mbf{j}}$ is ramified in at least one point and hence not geometrically trivial in this case as well. Finally, 
$$
\text{cond}(\mc{F}) = \text{cond}(\mc{K}_{\mbf{j}}) \text{cond}(\mc{L}_C) \leq 3(\deg{g_{\mbf{j}}} + \deg{h_{\mbf{j}}}+1) \ll |\tilde{T}|,
$$
so that indeed 
$$
\sum_{\substack{d \pmod{p} \\ p \nmid g_{\mbf{j}}(d)h_{\mbf{j}}(d) \\ p\nmid (d+\tau) \forall \tau \in T}} \xi(g_{\mbf{j}}(d) \bar{h}_{\mbf{j}}(d)) e_p(dC) \ll |\tilde{T}|^2p^{1/2},
$$
for all $\mbf{j} \neq \mbf{0}$. Inserting this and \eqref{eq:Jzero} into \eqref{eq:allJs} and summing over $\mbf{j}$, we reach the claim.
\end{proof}
\begin{proof}[Proof of Proposition \ref{prop:epsZero}]
Suppose $\mbf{\eps} = \mbf{0}$. Since $f_{T,\mbf{0}}(b) = cb$, we have
\begin{align*}
\sideset{}{^{\Diamond}}\sum_{d \pmod{p}} \mc{S}^{\ast, \mbf{0}}_{p^n}(c,\mu,\nu;a,d) &= \phi(\mbf{0}) p^{(N+M)n/2} \sideset{}{^{\Diamond}}\sum_{d \pmod{p}} \sum_{t \pmod{p^{n-1}}} e_{p^{n}}(c(pt + d)) \\
&= \phi(\mbf{0})p^{(N+M+2)n/2-1} 1_{p^{n-1}|c} \sideset{}{^{\Diamond}}\sum_{d \pmod{p}} e_p(dc'),
\end{align*}
where $c' = 0$ if $p^{n-1} \nmid c$, and otherwise $c \equiv c'p^{n-1} \pmod{p^{n}}$. By Lemma \ref{lem:DiamBd}, we have
$$
\sideset{}{^{\Diamond}}\sum_{d \pmod{p}} e_p(dc') =
3^{-|\tilde{T}|1_{p\equiv 1 \pmod{3}}} p1_{c' \equiv 0 \pmod{p}} + O_{|\tilde{T}|}(p^{1/2}1_{p \equiv 1 \pmod{3}} + 1_{p \equiv 2 \pmod{3}}),
$$
where $\tilde{T} := \{\tau \pmod{p} : \tau \in T\}$. As $|\tilde{T}| \leq |T| \leq N+M$,
the above expression is thus
$$
\ll_{N,M} p^{(N+M+2)n/2} \left(1_{c \equiv 0 \pmod{p^{n}}} + 1_{p^{n-1}|c}\left(p^{-1/2} 1_{p \equiv 1 \pmod{3}} + p^{-1}1_{c = 0}1_{p \equiv 2 \pmod{3}}\right)\right).
$$
By Lemma \ref{lem:zeroTerm}, this gives
\begin{align*}
\sideset{}{^{\Diamond}}\sum_{d \pmod{p}} \mc{S}^{\ast, \mbf{0}}_{p^n}(c,\mu,\nu;a,d) &\ll_{N,M} p^{(N+M+2)n/2} 1_{p \equiv 1 \pmod{3}} \left(1_{c \equiv 0 \pmod{p^{n}}} + p^{-1/2}1_{p^{n-1}|c}\right)  \prod_{\tau \in T} 1_{3|(\mu(\tau)-\nu(\tau))} \\
&+ p^{(N+M+2)n/2} 1_{p \equiv 2 \pmod{3}}\left( 1_{c \equiv 0 \pmod{p^{n}}} + p^{-1}1_{p^{n-1}|c}\right) \prod_{\tau \in T} 1_{\mu(\tau) = \nu(\tau)},
\end{align*}
which implies the claim.
\end{proof}
In the next two subsections, we treat the estimation of 
$$
\sideset{}{^{\Diamond}}\sum_{d \pmod{p}} \mc{S}_{p^n}^{\ast,\neq \mbf{0}}(c,\mu,\nu;a,d)
$$
in two ways: first, we will prove an estimate that is efficient when $n$ is large, and subsequently a different estimate that is most efficient when $n$ is not large (but $p$ is). 

\subsection{Bounds for Large $n$}
The main result of this subsection is the following.
\begin{prop}\label{prop:largeN}
Let $p>3$ be prime. Assume that $n \geq 2^{N+M}(N+M)^3$. Then we have
$$
\sideset{}{^{\Diamond}} \sum_{d \pmod{p}} \mc{S}_{p^n}^{\ast, \neq \mbf{0}}(c,\mu,\nu;a,d) \ll_{N,M} p^{(N+M+2-2\delta_1)n/2}
$$
with $\delta_1 := 2^{-N-M}$, unless there are at least two distinct $\tau,\tau' \in T$ such that
$$
\tau \equiv \tau' \pmod{p^{r_p(n)}},
$$
where we define
$$
r_p(n) := \llf \frac{2}{(N+M)(N+M-1)} \left(\llf 2^{-N-M} n\rrf-1_{p \leq 3(N+M)/2-1}- \left\lceil \frac{\log(20(N+M)^3)}{\log p} \right\rceil\right)\rrf.
$$
\end{prop}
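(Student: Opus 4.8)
The plan is to adapt the iterated $p$-adic stationary phase argument of Mili\'cevi\'c and Zhang \cite{MilZhan} to the phase functions $f_{T,\mbf{\eps}}$ from \eqref{eq:fTeDef}. By the reduction just carried out,
$$
\sideset{}{^{\Diamond}}\sum_{d\pmod{p}}\mc{S}^{\ast,\neq\mbf{0}}_{p^n}(c,\mu,\nu;a,d)=p^{(N+M)n/2}\sum_{\mbf{\eps}\neq\mbf{0}}\phi(\mbf{\eps})\sideset{}{^{\Diamond}}\sum_{d\pmod{p}}\ \sum_{\substack{b\pmod{p^n}\\ b\equiv d\pmod{p}}} e_{p^n}(f_{T,\mbf{\eps}}(b)),
$$
and since $\phi$ is supported on $O_{N,M}(1)$ tuples $\mbf{\eps}$ with $\sum_{\mbf{\eps}}\phi(\mbf{\eps})\ll 3^{N+M}$, it suffices to prove, for each fixed $\mbf{\eps}\neq\mbf{0}$ and each $d$ satisfying $(\Diamond)$, the single-disc bound $\sum_{b\equiv d\,(p)}e_{p^n}(f_{T,\mbf{\eps}}(b))\ll_{N,M} p^{n-1-\delta_1 n}$, and then to sum it over the $\le p$ admissible discs. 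The preliminary observations I would record are that, because $p>3$, the chosen branch $s$ of the cube root makes $f_{T,\mbf{\eps}}$ $p$-adically analytic on each such disc, with
$$
f'_{T,\mbf{\eps}}(b)=c+\sum_{\tau\in T}\frac{\eps_{\tau}}{s(\bar{2a}(b+\tau))};
$$
that Lemma \ref{lem:pval} constrains the $p$-adic valuations of the components of $\mbf{\eps}$ (when $\phi(\mbf{\eps})\neq 0$), preventing the oscillation of $f_{T,\mbf{\eps}}$ from being $p$-adically suppressed once $\mbf{\eps}\neq\mbf{0}$; and that after the unit-Jacobian substitution $b=2av^3-\tau_0$ (with $\tau_0\in T$ fixed), $f_{T,\mbf{\eps}}$ becomes a polynomial in $v$ and in the finitely many cube-root branches $s(v^3+\bar{2a}(\tau-\tau_0))$, i.e.\ a ``cube-root polynomial'' of complexity $O_{N,M}(1)$.

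Next I would run the stationary phase reduction. One application of the $p$-adic second-derivative test (the mechanism already used in the proof of Lemma \ref{lem:expK2}, cf.\ \cite[Lemmas 12.2--12.3]{IK}) rewrites the disc sum as $p^{\lceil n/2\rceil}$ times a sum over the critical locus $\{f'_{T,\mbf{\eps}}\equiv 0\pmod{p^{\lfloor n/2\rfloor}}\}$ of a new cube-root polynomial phase to modulus $p^{\lfloor n/2\rfloor}$, up to a bounded Gauss-sum factor in the odd case. Clearing the cube-root denominators in $f'_{T,\mbf{\eps}}$ by passing to the norm down to $\Q_p(v)$ (using $s(\cdot)^3=\cdot$) exhibits this critical locus inside the zero set, modulo $p^{\lfloor n/2\rfloor-O_{N,M}(1)}$, of a polynomial $P\in\Z_p[v]$ of degree $O_{N,M}(1)$, the $p$-adic content of $P$ and the valuations of its discriminant and of the pairwise resultants of its irreducible factors being governed by the quantities $v_p(\tau-\tau')$ for $\tau\neq\tau'\in T$, up to the valuations of the small integers (powers of $3$, binomial coefficients) that surface when the cube-root branches are Taylor expanded --- which is exactly where Lemma \ref{lem:pval}, and hence the constant $\rho$, re-enters. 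A Newton-polygon/Hensel root count then bounds the number of critical points modulo $p^m$ by $O_{N,M}(p^{m(1-1/D)+O_{N,M}(1)})$ with $D=O_{N,M}(1)$, unless $P$ is too degenerate.

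I would then iterate this reduction at most $N+M$ times, roughly halving the modulus exponent at each step --- here the hypothesis $n\ge 2^{N+M}(N+M)^3$ keeps the running modulus $\gg_{N,M} (N+M)^2$ throughout, which is what the cube-root expansions and the root counts require --- while accumulating the power savings. Either the critical variety stays sufficiently non-degenerate at every step and the total saving over the trivial bound $p^{n-1}$ is at least $p^{\delta_1 n-O_{N,M}(1)}$ with $\delta_1=2^{-N-M}$, giving the claimed estimate; or at some stage a critical point is so degenerate that the polynomial $P$ acquires $p$-adic content (or a near-coincidence of two of its roots) of valuation exceeding $\lfloor 2^{-N-M}n\rfloor-\rho$. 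Tracing this back through the cube-root expansions, the only way such degeneracy can arise is if the shifts collide $p$-adically, forcing $\sum_{\{\tau,\tau'\}\subseteq T}v_p(\tau-\tau')\ge\lfloor 2^{-N-M}n\rfloor-\rho$; by the pigeonhole principle over the $\le\binom{N+M}{2}$ unordered pairs, there are then distinct $\tau,\tau'\in T$ with $v_p(\tau-\tau')\ge r_p(n)$ --- precisely the exceptional case allowed for in the statement. Summing the single-disc bound over the $\le p$ admissible $d$ and over the $O_{N,M}(1)$ admissible $\mbf{\eps}$ then finishes the argument; the count of admissible $d$ is controlled by Lemma \ref{lem:DiamBd}, and the already-treated $\mbf{\eps}=\mbf{0}$ contribution is handled separately in Proposition \ref{prop:epsZero}.

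The hard part, and what demands the most care, is the iteration in the previous paragraph: (i) checking that the phase produced by each stationary-phase step is again a cube-root polynomial of bounded complexity, so the reduction can legitimately be repeated; (ii) pinning down exactly how a degenerate critical point at stage $i$ forces a $p$-adic coincidence among the elements of $T$, with the right exponent so that the pigeonhole delivers $r_p(n)$ rather than something weaker; and (iii) bookkeeping the bounded $p$-adic denominators --- encoded in $\rho$ via Lemma \ref{lem:pval} --- that accumulate over the $O_{N,M}(1)$ steps, so that the final saving is $p^{\delta_1 n-O_{N,M}(1)}$ and not an exponent that decays with $n$. The remaining inputs --- the $p$-adic stationary phase lemma, essentially already present in the proof of Lemma \ref{lem:expK2}, and the Newton-polygon root count --- are standard.
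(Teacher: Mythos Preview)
Your approach is the right one in spirit --- the paper also proves this by the iterative stationary phase method of Mili\'{c}evi\'{c}--Zhang --- but your execution diverges from the paper's in a way that leaves the hardest step underspecified.

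The paper's proof is much shorter than your sketch: it proceeds via an intermediate result (Proposition~\ref{prop:ftepsbd}) whose proof simply \emph{cites} Propositions~8 and~9 of \cite{MilZhan} as black boxes. The only genuine work is to verify the structural hypothesis those propositions require, and then to compute the constants $\delta_1,\delta_2,\delta_3,\rho$ in the present setting. The structural hypothesis is that there is an explicit chain of ``derivative'' functions
\[
f^{(j)}_{T,\mbf{\eps}}(b) = b^{1-j}c\,1_{j\in\{0,1\}} + 3a\binom{2/3}{j}\sum_{\tau\in T}\eps_\tau\, s(\bar{2a}(b+\tau))^{2-3j}
\]
satisfying the Taylor-type relations $f^{(j)}(b+p^\kappa t)\equiv f^{(j)}(b)+p^\kappa t\, f^{(j+1)}(b)$. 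With this in hand, MilZhan Proposition~8 reduces the disc sum to the set $\mc{E}_{|T|}$ where all of $f^{(1)},\ldots,f^{(|T|)}$ vanish to precision $p^{\lfloor\delta_2 n\rfloor}$, and MilZhan Proposition~9 shows $\mc{E}_{|T|}=\emptyset$ unless some $\tau\equiv\tau'$ to the stated precision. The factor $\delta_3=\binom{|T|}{2}^{-1}$ comes out of that emptiness argument (essentially a Vandermonde determinant in the quantities $s(\bar{2a}(b+\tau))^{-3}$), which is indeed where your pigeonhole over pairs lands. The bookkeeping for $\rho$ is then exactly Lemmas~\ref{lem:pval} and~\ref{lem:nuPbinom}.

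Where your sketch is shakier: you never isolate the chain $f^{(j)}$, and your substitute --- clearing cube-root denominators by a norm to obtain a polynomial $P$, then bounding critical points by a Newton-polygon count $O_{N,M}(p^{m(1-1/D)})$ --- is both weaker than needed and not obviously stable under iteration. The MilZhan mechanism does not need a root count of that shape at all; the iteration is arranged so that each step passes from $f^{(j)}$ to $f^{(j+1)}$ and the final critical set is an \emph{overdetermined} linear system in the $s(\cdot)^{-3}$, hence empty in the non-degenerate case. Your claimed single-disc bound $p^{n-1-\delta_1 n}$ is also a factor of $p$ stronger than what the paper extracts from MilZhan (namely $p^{(1-\delta_1)n}$); that extra factor of $p$ reappears as the $+1$ in the exponent of Theorem~\ref{thm:expsumPrimPow}, so be careful not to overclaim. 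Finally, your parenthetical invocation of Lemma~\ref{lem:DiamBd} at the end is unnecessary: the sum over $d$ is handled trivially by $\le p$.
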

The above proposition will be proved using a method of Mili\'{c}evi\'{c} and Zhang \cite[Section 4]{MilZhan}, a consequence of which is the following proposition concerning exponential sums with argument function $f_{T,\mbf{\eps}}$ (see \eqref{eq:fTeDef} for the definition).
\begin{prop} \label{prop:ftepsbd}
There exist constants $\delta_i = \delta_i(|T|) > 0$ for $i = 1,2,3$, and $\rho = \rho(N,M,|T|) > 0$ such that the following holds. \\
For any positive integer $n \geq 2$ such that $p^{n/2} > 20(N+M)^3$, any $d \in \mb{Z}/p\mb{Z}$ and any non-zero $\mbf{\eps} \in (\mb{Z}/p^{n}\mb{Z})^{|T|}$ with $\phi(\mbf{\eps}) \neq 0$, either: 
\begin{itemize} 
\item the estimate
$$
\sum_{b \in \mb{Z}/p^{n}\mb{Z} \atop b \equiv d \pmod{p}} e_{p^{n}}(f_{T,\mbf{\eps}}(b)) \ll_{|T|} p^{(1-\delta_1)n}
$$ 
holds, or else
\item there are at least two distinct $\tau,\tau' \in T$ such that $\eps_{\tau},\eps_{\tau'} \not \equiv 0 \pmod{p^{n}}$ and 
$$
\tau \equiv \tau' \pmod{p^{\llf \delta_3(\llf \delta_2n\rrf - \rho)\rrf}}.
$$
\end{itemize}
In particular, the values $\delta_1 = \delta_2 = 2^{-|T|}$, $\delta_3 = \binom{|T|}{2}^{-1}$ and 
$$
\rho = 1_{p \leq 3|T|/2-1} + \left \lceil \frac{\log(20(N+M)^3)}{\log p}\right \rceil,
$$
are admissible.
\end{prop}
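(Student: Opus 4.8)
The plan is to prove Proposition \ref{prop:ftepsbd} by running the iterated $p$-adic stationary phase descent of Mili\'{c}evi\'{c} and Zhang \cite[Section 4]{MilZhan}, specialised to the phase function $f_{T,\mbf{\eps}}$ of \eqref{eq:fTeDef}. First I would record the $p$-adic analytic picture. Fix $d$ satisfying $(\Diamond)$ and a non-zero $\mbf{\eps}$ with $\phi(\mbf{\eps}) \neq 0$. On the residue disc $d + p\mb{Z}_p$, condition $(\Diamond)$ guarantees that each $\bar{2a}(b+\tau)$ is the cube of a unit, so the chosen branches $g_{\tau}(b) := s(\bar{2a}(b+\tau))$ extend to genuine $p$-adic analytic functions there with $g_{\tau}(b)^3 = \bar{2a}(b+\tau)$, and $f_{T,\mbf{\eps}}(b) = cb + 3a\sum_{\tau \in T}\eps_{\tau}g_{\tau}(b)^2$ is analytic. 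Differentiating $g_{\tau}^3 = \bar{2a}(b+\tau)$ gives $g_{\tau}' = \tfrac{\bar{2a}}{3}g_{\tau}^{-2}$, hence
\[
f_{T,\mbf{\eps}}'(b) = c + \sum_{\tau \in T}\eps_{\tau}g_{\tau}(b)^{-1}, \qquad f_{T,\mbf{\eps}}''(b) = -\tfrac{\bar{2a}}{3}\sum_{\tau \in T}\eps_{\tau}g_{\tau}(b)^{-4}.
\]
After clearing the denominators $\prod_{\tau}g_{\tau}$, the critical locus of $f_{T,\mbf{\eps}}$ on the disc is the zero set of a polynomial in the $g_{\tau}$, and the size of this zero set modulo powers of $p$ is governed by a resultant-type expression in the differences $g_{\tau}-g_{\tau'}$; since $g_{\tau}^3-g_{\tau'}^3 = \bar{2a}(\tau-\tau')$, this is in turn controlled by the separations $\nu_p(\tau-\tau')$ together with the valuations $\nu_p(\eps_{\tau})$.

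Next I would run the descent. The basic step (the stationary phase lemma of \cite[Section 4]{MilZhan}) takes a complete sum $\sum_b e_{p^m}(F(b))$ with $b$ ranging over a residue class modulo $p$ inside $\mb{Z}/p^m\mb{Z}$, $F$ analytic, and $F'$ having a unique simple zero modulo the relevant power of $p$, and rewrites it as $\eps' p^{m/2}$ times a sum of the same shape with modulus $p^{\lceil m/2\rceil}$, a new analytic phase $\tilde{F}$ determined by $F$, and an extra Gauss sum modulo $p$ when $m$ is odd (here $\eps' \in S^1$). Starting from $F = f_{T,\mbf{\eps}}$ and $m = n$, at stage $j$ I would factor the leading coefficient of the derivative as $p^{k_j}$ times a unit part; the step applies cleanly precisely when the unit part has a simple zero, i.e. when a non-degeneracy parameter $\Delta_j$ — the $p$-adic valuation of a resultant built from $(\tau)_{\tau \in T}$, $(\eps_{\tau})_{\tau \in T}$ and the intermediate phases — satisfies $\Delta_j \leq c_{|T|}m_j$ for the current modulus exponent $m_j \asymp 2^{-j}n$. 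Each good step contributes a factor $p^{m_j/2}$, and after $L := |T|$ good steps the residual sum has modulus $p^{m_L}$ with $m_L \leq 2^{-|T|}n + O_{|T|}(1)$, which I bound trivially. Multiplying the factors yields
\[
\sum_{\substack{b \pmod{p^n} \\ b \equiv d \pmod{p}}} e_{p^n}(f_{T,\mbf{\eps}}(b)) \ll_{|T|} p^{(1-2^{-|T|})n},
\]
the first alternative, with $\delta_1 = 2^{-|T|}$.

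If instead the descent breaks down, then $\Delta_j > c_{|T|}m_j$ at some stage, meaning a resultant of the $g_{\tau}$-type polynomials is divisible by $p$ to an exponent $\gg 2^{-|T|}n$. Expanding that resultant into a product over the at most $\binom{|T|}{2}$ pairs $\{\tau,\tau'\}$ with $\eps_{\tau},\eps_{\tau'}\neq 0$ of factors controlled by $\nu_p(\tau-\tau')$ — the bounded exceptional contributions being handled by Lemma \ref{lem:pval}, which requires $p^{n/2} > 20(N+M)^3$ and accounts for the term $\lceil\log(20(N+M)^3)/\log p\rceil$ in $\rho$, together with the small-prime correction $1_{p \leq 3|T|/2-1}$ — a pigeonhole over pairs produces distinct $\tau,\tau' \in T$ with $\eps_{\tau},\eps_{\tau'}\not\equiv 0 \pmod{p^n}$ and
\[
\tau \equiv \tau' \pmod{p^{\llf \delta_3(\llf \delta_2 n\rrf - \rho)\rrf}}, \qquad \delta_2 = 2^{-|T|},\ \ \delta_3 = \binom{|T|}{2}^{-1},
\]
which is the second alternative.

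The hard part will be the algebraic bookkeeping inside the descent: proving that the non-degeneracy parameters $\Delta_j$ of the successive phases $\tilde{F}$ degrade in a controlled fashion, and that a failure of non-degeneracy can be caused \emph{only} by $p$-adic coincidences among the elements of $T$. This amounts to re-running the resultant / Newton-polygon analysis of \cite[Section 4]{MilZhan} for the specific cube-root phase $f_{T,\mbf{\eps}}$ — tracking in particular how $\nu_p(g_{\tau}-g_{\tau'})$ and $\nu_p(\eps_{\tau})$ propagate under the stationary-phase substitution — and carefully carrying the odd-modulus Gauss-sum factors and the small-prime contributions, which together are the origin of the correction $\rho$.
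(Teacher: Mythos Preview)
Your proposal is correct and follows the same approach as the paper: both invoke the Mili\'{c}evi\'{c}--Zhang iterated stationary phase for the phase $f_{T,\mbf{\eps}}$. The paper's execution is considerably more economical, however. Rather than re-running the descent and tracking non-degeneracy parameters $\Delta_j$ through successive phases, it cites Propositions~8 and~9 of \cite{MilZhan} as black boxes: Proposition~8 gives directly
\[
\sum_{b \in X} e_{p^n}(f_{T,\mbf{\eps}}(b)) = \sum_{b \in X \cap \mc{E}_{|T|}} e_{p^n}(f_{T,\mbf{\eps}}(b)) + O_{|T|}(p^{(1-\delta_1)n}),
\]
and Proposition~9 shows $\mc{E}_{|T|}=\emptyset$ under the separation hypothesis. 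The only work left is to verify the hypotheses: (i) the Taylor-type relations $f^{(j)}(b+p^{\kappa}t)\equiv f^{(j)}(b)+p^{\kappa}t\,f^{(j+1)}(b)\pmod{p^{\min\{2\kappa,n\}}}$, which follow from the $p$-adic binomial expansion of $g_{\tau}^{2-3j}$; and (ii) the bound on $\rho_0 = \max_j\nu_p\binom{2/3}{j} + \min_{\tau}\nu_p(\eps_{\tau})$, obtained from Lemma~\ref{lem:nuPbinom} (source of the $1_{p\leq 3|T|/2-1}$ term) and Lemma~\ref{lem:pval} respectively. So the ``hard part'' you anticipate is already packaged in \cite{MilZhan}.

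Two minor points to clean up. First, your formula for $f'$ drops a factor of $2a$: from $3g_{\tau}^2 g_{\tau}' = \bar{2a}$ one gets $f_{T,\mbf{\eps}}'(b)=c+2a\sum_{\tau}\eps_{\tau}g_{\tau}(b)^{-1}$, matching the paper's $f^{(1)}$. Second, the paper separately dispatches the degenerate case $|\{\tau:\eps_{\tau}\not\equiv 0\}|\leq 1$ by a single stationary-phase step giving $\ll p^{n/2}$; you should include this, since your resultant/pigeonhole over pairs is vacuous there.
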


We need the following simple lemma about $p$-adic valuations of generalized binomial coefficients.
\begin{lem} \label{lem:nuPbinom}
Let $p > 3$. Then for any $k \geq 1$, 
$$
0 \leq \max_{0 \leq j \leq k} \nu_p\left(\binom{2/3}{j}\right) \leq 1_{p \leq 3k/2-1}.
$$
\end{lem}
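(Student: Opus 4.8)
The plan is to convert the generalized binomial coefficient into a quotient of ordinary integers and then compare $p$-adic valuations directly. Writing
\[
\binom{2/3}{j} = \frac{1}{j!}\prod_{i=0}^{j-1}\left(\frac{2}{3}-i\right) = \frac{\prod_{i=0}^{j-1}(2-3i)}{3^{j}\,j!},
\]
and using $p>3$ to discard the factor $3^{j}$, the first step is the identity
\[
\nu_p\!\left(\binom{2/3}{j}\right) = \nu_p\!\left(\prod_{i=0}^{j-1}(2-3i)\right) - \nu_p(j!),
\]
which reduces the problem to comparing two explicit valuations for each $j \leq k$.

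The lower bound $\nu_p\big(\binom{2/3}{j}\big)\geq 0$ I would obtain from the standard fact that every polynomial $\binom{X}{j}$ takes integer values on $\mb{Z}$ (the $\binom{X}{j}$ form a $\mb{Z}$-basis of the ring of integer-valued polynomials); by continuity and the density of $\mb{Z}$ in $\mb{Z}_p$ this forces $\binom{X}{j}$ to map $\mb{Z}_p$ into $\mb{Z}_p$, and since $p>3$ we have $2/3 \in \mb{Z}_p$, so $\binom{2/3}{j} \in \mb{Z}_p$.

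For the upper bound, fix $0 \leq j \leq k$. For each $m \geq 1$ the congruence $3i \equiv 2 \pmod{p^{m}}$ has a unique solution modulo $p^{m}$, so $\{0 \leq i \leq j-1 : p^{m}\mid 2-3i\}$ lies in a single residue class modulo $p^{m}$; if $c_m$ is its cardinality, then a short comparison with Legendre's formula $\nu_p(j!)=\sum_{m\geq 1}\lfloor j/p^{m}\rfloor$ shows $c_m - \lfloor j/p^{m}\rfloor \in \{0,1\}$ for every $m$, so that $\nu_p\big(\binom{2/3}{j}\big)$ equals the number of $m$ for which $c_m$ strictly exceeds $\lfloor j/p^{m}\rfloor$. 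Since $c_m \geq 1$ forces a factor $2-3i$ with $1 \leq |2-3i| \leq 3j-1 \leq 3k-1$ divisible by $p^{m}$, one then wants to argue that when $p > 3k/2-1$ this already forces $m=1$, that the unique divisible factor must equal $\pm p$ (or $\pm 2p$) and hence contributes valuation exactly $1$, and that $k < p$ so that $\nu_p(j!)=0$, collapsing the whole sum into $1_{p \leq 3k/2-1}$.

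I expect this last collapse to be the main obstacle: one must pin down, in terms of the residue of $p$ modulo $3$, exactly when the unique solution $i_0$ of $3i \equiv 2 \pmod{p}$ lies in the window $0 \leq i \leq j-1$, i.e. when $i_0 \leq k-1$. Since $i_0 \in \{(p+2)/3,\,(2p+2)/3\}$ according as $p \equiv 1$ or $2 \bmod 3$, this comes down to a delicate inequality relating $p$, $k$ and $p \bmod 3$, and it is the step most sensitive to the precise numerical threshold $3k/2-1$ in the statement (and, if needed, to whatever standing hypothesis bounding $k$ in terms of $p$ — such as $p^{n/2} > 20(N+M)^{3}$ — is in force when the lemma is invoked).
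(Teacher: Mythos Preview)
Your overall strategy---clearing the $3^{j}$, reducing to $\nu_p\bigl(\prod_{i=0}^{j-1}(2-3i)\bigr)-\nu_p(j!)$, invoking Legendre's formula, and then counting the $i\in[0,j-1]$ lying in the residue class $2\bar 3\pmod{p^{m}}$---is exactly what the paper does. Your lower bound via the $p$-adic integrality of $\binom{X}{j}$ on $\mb{Z}_p$ is a nicer argument than the paper's (which just takes $j=0$), and both suffice for the statement about the maximum.

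Your hesitation about the ``last collapse'' is well founded: the step fails, and in fact the lemma as stated is \emph{false}. Take $p=7$ and $k=4$; then
\[
\binom{2/3}{4}=\frac{(2/3)(-1/3)(-4/3)(-7/3)}{24}=-\frac{7}{243},
\]
so $\nu_7\binom{2/3}{4}=1$, while $3k/2-1=5<7$ gives $1_{p\le 3k/2-1}=0$. The paper's own proof commits the same error at precisely the point you flagged: it asserts that if $p>3j/2-1$ then no $l\le j-1$ satisfies $p\mid 3l-2$, but when $p\equiv 1\pmod 3$ the minimal such $l$ is $(p+2)/3$, which lies in $[1,j-1]$ already for $p\le 3j-5$. (Moreover, for $p=7$ and $k=18$ one computes $\nu_7\binom{2/3}{18}=2$, so the ``$\le 1$'' side of the claimed bound also breaks.)

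The repair is painless and follows your own outline. Since $c_m-\lfloor j/p^{m}\rfloor\in\{0,1\}$ for every $m$, and $c_m=0$ once $p^{m}$ exceeds $\max_{0\le i\le j-1}|2-3i|<3k$, one gets
\[
0\ \le\ \nu_p\!\binom{2/3}{j}\ \le\ \Bigl\lfloor \frac{\log(3k)}{\log p}\Bigr\rfloor
\qquad(0\le j\le k).
\]
In the only place the lemma is used (the estimation of $\rho_0$ in the proof of Proposition~\ref{prop:ftepsbd}, with $k=|T|\le N+M$), this replaces the term $1_{p\le 3|T|/2-1}$ by $\lfloor \log(3|T|)/\log p\rfloor$, which is already dominated by the neighbouring term $\lceil \log(20(N+M)^{3})/\log p\rceil$ and so changes nothing downstream.
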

\begin{proof}
Notice that 
$$
\binom{2/3}{j} = (j!)^{-1}\prod_{0 \leq l \leq j-1}(2/3-l) = \frac{(-1)^{j-1}2}{3^{j}j!} \prod_{1 \leq l \leq j-1}(3l-2),
$$
and so as $p > 3$,
$$
\nu_p\left(\binom{2/3}{j}\right) = \nu_p\left(\prod_{1 \leq l \leq j-1} (3l-2)\right) - \nu_p(j!).
$$
Note that 
$$
\max_{0 \leq j \leq k} \nu_p\left(\binom{2/3}{j}\right) \geq \nu_p\left(\binom{2/3}{0}\right) \geq 0.
$$ 
By Legendre's formula for $p$-adic valuations of factorials, it suffices to show that
$$
\nu_p\left(\prod_{1 \leq l \leq j-1} (3l-2)\right) \leq \sum_{r \geq 1} \llf \frac{j}{p^r} \rrf + 1_{p \leq 3j/2-1},
$$
for each $1 \leq j \leq k$. To check this inequality, we observe first that if $1 \leq l_0 \leq j$ is minimal such that $p|(3l_0-2)$ and $l > l_0$ also has this property then $p|(l-l_0)$. Clearly, by minimality we must have $\nu_p(3l_0-2) = 1$: if $p \equiv 1 \pmod{3}$ then $l_0$ satisfies $p = 3l_0-2$; if $p \equiv 2 \pmod{3}$ then $2p = 3l_0-2$. In particular, if $p > 3j/2-1$ then $l_0$ does not exist. \\
Consider next when $p \leq 3j/2-1$. Then we have
$$
\nu_p\left(\prod_{1 \leq l \leq j-1}(3l-2)\right) = \sum_{r \geq 1} |\{1 \leq l \leq j-1 : l \equiv 2\bar{3} \pmod{p^r}\}| \leq \sum_{r \geq 1} \llf \frac{j-a_r}{p^r}\rrf,
$$
where $0 \leq a_r \leq p^r-1$ is a minimal representative of the residue class $2\bar{3} \pmod{p^r}$. We clearly have $\llf (j-a_r)/p^r\rrf \leq \llf j/p^r \rrf$, so that, summing over $r$, we obtain the desired bound. 
%
%
%
\end{proof}

The following further observations will be key. If $b \in (\mb{Z}/p^n\mb{Z})^{\times}$, $\kappa \in \mb{N}$ and $t \in \mb{Z}/p^n \mb{Z}$, we note that
\begin{align*}
s(\bar{2a}(b+\tau+p^{\kappa}t))^2 &= s(\bar{2a}(b+\tau)(1+p^{\kappa}t/(b+\tau)))^2 \\
&= s(\bar{2a}(b+\tau))^2 \sum_{l \geq 0} \binom{2/3}{l} (p^{\kappa} t)^l (b+\tau)^{-l} = \sum_{l \geq 0} \binom{2/3}{l} (p^{\kappa} t)^l s(\bar{2a}(b+\tau))^{2-3l} \\
&\equiv s(\bar{2a}(b+\tau))^2 + \frac{2}{3}s(\bar{2a}(b+\tau))^{-1} p^{\kappa} t \pmod{p^{\min\{n,2\kappa\}}},
\end{align*}
where the second equality is owed to the convergence in the $p$-adic topology of the power series
\begin{equation}\label{eq:ppowSer}
(1 + x)^{2/3} = \sum_{l \geq 0} \binom{2/3}{l} x^l,
\end{equation}
as long as $|x|_p < 1$. It follows from this that
\begin{equation} \label{eq:TaylorExp}
f_{T,\mbf{\eps}}(b+p^{\kappa}t) \equiv \left(bc + 3a\sum_{\tau \in T} \eps_{\tau} s(\bar{2a}(b+\tau))^2\right) + p^{\kappa}t \left(c + 2a\sum_{\tau \in T} \eps_{\tau} s(\bar{2a}(b+\tau))^{-1}\right) \pmod{p^{\min\{2\kappa,n\}}}.
\end{equation}
Analogously, we also define the ``derivative'' functions
$$
f_{T,\mbf{\eps}}^{(j)}(b) := b^{1-j}c1_{j \in \{0,1\}} + 3a \binom{2/3}{j} \sum_{\tau \in T} \eps_{\tau} s(\bar{2a}(b+\tau))^{2-3j}.
$$
\begin{proof}[Proof of Proposition \ref{prop:ftepsbd}]
We begin by noting that if $|\{\tau \in T : \eps_{\tau} \not \equiv 0 \pmod{p^{n}}\}| \leq 1$ then since $\mbf{\eps}$ is non-zero the $p$-adic stationary phase method (see e.g., Lemma 1 (1) of \cite{MilZhan}) implies that for some $\tau_0 \in T$,
\begin{align*}
\sum_{b \in \mb{Z}/p^{n}\mb{Z} \atop b \equiv d \pmod{d}} e_{p^{n}}(f_{T,\mbf{\eps}}(b)) &= \sum_{b \in \mb{Z}/p^{n}\mb{Z} \atop b \equiv d \pmod{d}} e_{p^{n}}(bc + \eps_{\tau_0} s(\bar{2a}(b+\tau_0))^2) \\
&= p^{n/2} \sum_{\substack{b \in \mb{Z}/p^{n}\mb{Z} \\ b \equiv d\pmod{p} \\ \eps_{\tau_0}s(\bar{2a}(b+\tau_0))^{2} \equiv - c \pmod{p^{\llf n/2\rrf}}}} e_{p^{n}}(bc + \eps_{\tau_0}s(\bar{2a}(b+\tau_0))^2) \\
&\ll p^{n/2},
\end{align*}
since the inner sum has at most a single summand. As $N+M \geq 1$ this satisfies the first claim, so henceforth we may assume that $|\{\tau \in T : \eps_{\tau} \not \equiv 0 \pmod{p^{n}}\}| \geq 2$. \\
The second condition is now vacuously satisfied if $0 < \delta_2n < 1$, so we may assume that $\delta_2 n \geq 1$. We put $X := \{b \pmod{p^{n}} : b \equiv d \pmod{p}\}$, noting that this is $p^{\llf \delta_2n \rrf} \mb{Z}/p^{n}\mb{Z}$-invariant. Putting
$$
\mc{E}_{|T|} := \left\{b \in \mb{Z}/p^{n}\mb{Z} : f^{(j)}(b) \equiv 0 \pmod{p^{\llf \delta_2 n\rrf}} \text{ for all $1 \leq j \leq |T|$}\right\},
$$
the proof of Proposition 8 of \cite{MilZhan} shows that 
\begin{equation}\label{eq:restoCrit}
\sum_{b \in X} e_{p^{n}}(f_{T,\mbf{\eps}}(b)) = \sum_{b \in X \cap \mc{E}_{|T|}} e_{p^{n}}(f_{T,\mbf{\eps}}(b)) + O_{|T|}\left(p^{(1-\delta_1)n}\right),
\end{equation}
as long as (writing $f_{T,\mbf{\eps}}^{(0)} = f_{T,\mbf{\eps}}$) the relations
$$
f^{(j)}_{T,\mbf{\eps}}(b+p^{\kappa}t) \equiv f^{(j)}_{T,\mbf{\eps}}(b) + p^{\kappa} t f^{(j+1)}_{T,\mbf{\eps}}(b) \pmod{p^{\min\{2\kappa,n\}}}
$$
hold for $0 \leq j \leq |T|-1$; this is guaranteed by a calculation analogous to \eqref{eq:TaylorExp}. The proof of Proposition 9 there shows that if 
$$
\rho_0 := \max_{1 \leq j \leq |T|} \nu_p \left( \binom{2/3}{j} \right) + \min_{\tau \in T} \nu_p(\eps_{\tau})
$$ 
then $\mc{E}_T = \emptyset$ whenever
$$
\min\{|\tau-\tau|_p : \tau, \tau' \in T, \tau \neq \tau', \eps_{\tau},\eps_{\tau'} \not \equiv 0 \pmod{p^n}\} \geq p^{-\delta_3(\llf \delta_2n\rrf - \rho_0)},
$$
or equivalently, whenever 
$$
\tau \not \equiv \tau' \pmod{p^{\llf \delta_3(\llf \delta_2 n\rrf-\rho)\rrf}} \text{ for all distinct $\tau,\tau'$ with non-zero $\eps_{\tau},\eps_{\tau'}.$}
$$ 
Let us assume the latter condition. Then \eqref{eq:restoCrit} yields the estimate
$$
\sum_{b \in X} e_{p^{n}}(f_{T,\mbf{\eps}}(b)) \ll_{|T|} p^{(1-\delta_1)n},
$$
and it remains to check that the required constraints on the constants hold. As noted above the proof of Proposition 8 in \cite{MilZhan}, we may take $\delta_1 = \delta_2 = 2^{-|T|}$ and $\delta_3 = \binom{|T|}{2}^{-1}$. Now, if $p \equiv 2 \pmod{3}$ then $\eps_{\tau}$ can be identified with an integer in $[-N,M]$, and we deduce that $\nu_p(\eps_{\tau}) \leq \left\lceil \log(2(N+M))/\log p \right \rceil$, which is acceptable. Thus, consider when $p \equiv 1 \pmod{3}$. We may then write 
$$
\eps_{\tau} = \alpha_{\tau} + \beta_{\tau} u_0 + \gamma_{\tau}u_0^2,
$$
and as $\phi(\mbf{\eps}) \neq 0$ such a representation exists with $|\alpha_{\tau}|,|\beta_{\tau}|,|\gamma_{\tau}| \leq N+M$ for all $\tau \in T$. Since $\mbf{\eps}$ is non-zero we may find $\tau' \in T$ such that $\eps_{\tau'} \not \equiv 0 \pmod{p^n}$, and therefore $\alpha_{\tau'},\beta_{\tau'}$ and $\gamma_{\tau'}$ are not the same. By Lemma \ref{lem:pval} we get 
$$
\min_{\tau \in T} \nu_p(\eps_{\tau}) \leq \nu_p(\eps_{\tau'}) \leq \left\lceil \frac{\log(20(N+M)^3)}{\log p}\right \rceil.
$$ 
Finally, by Lemma \ref{lem:nuPbinom}, we have
$$
\max_{1 \leq j \leq |T|} \nu_p\left(\binom{2/3}{j}\right) \leq 1_{p \leq 3|T|/2-1}.
$$
We thus obtain
$$
\rho_0 \leq 1_{p \leq 3|T|/2-1} + \left\lceil \frac{\log(20(N+M)^3)}{\log p} \right\rceil =: \rho.
$$
The claim then follows by replacing $\rho_0$ with its upper bound $\rho$ in the second alternative of the proposition, which relaxes the condition there.
\end{proof}

\begin{proof}[Proof of Proposition \ref{prop:largeN}]
Note that by the assumed lower bound for $n$, we must have $p^{n/2} > 20(N+M)^3$. 
We may partition the non-zero $\mbf{\eps}$ into the sets 
\begin{align*}
\mc{A}_T &:= \left\{\mbf{\eps} \in (\mb{Z}/p^{n}\mb{Z})^{|T|} \bk \{\mbf{0}\} : \min_{\substack{\tau \neq \tau' \\ \eps_{\tau},\eps_{\tau'} \not\equiv 0 \pmod{p^{n}}}} |\tau-\tau'|_p \geq p^{-\delta_3(\llf \delta_2 n \rrf - \rho)}\right\}\\
\mc{B}_T &:= \left((\mb{Z}/p^{n}\mb{Z})^{|T|} \bk \{\mbf{0}\}\right) \bk \mc{A}_T.
\end{align*}
Noting that $|T| \leq N+M$, we apply Proposition \ref{prop:ftepsbd} to bound 
$$
p^{(N+M)n/2} \sideset{}{^{\Diamond}} \sum_{d \pmod{p}} \sum_{\mbf{\eps} \in \mc{A}_T} \phi(\mbf{\eps}) \sum_{b \pmod{p^{n}} \atop b \equiv d \pmod{p}} e_{p^{n}}\left(f_{T,\mbf{\eps}}(b)\right) \ll_{N,M} p^{(N+M)n/2 + 1} \cdot p^{(1-\delta_1)n} \ll p^{(N+M+2-2\delta_1)n/2 + 1}.
$$
On the other hand, if $\mc{B}_T \neq \emptyset$ then the second alternative of the proposition holds. This implies the claim.
\end{proof}

\subsection{Bounds for Small $n$ and Large $p$}
The above estimates are useful for $n$ sufficiently large in terms of $N,M$. In this subsection we provide an estimate which is more efficient for $n \ll_{N,M} 1$, but with $p \gg_{N,M} 1$. 
As before, we write
$$
\sideset{}{^{\Diamond}}\sum_{d \pmod{p}} S^{\ast, \neq \mbf{0}}_{p^n}(c,\mu,\nu;a,d) = \sideset{}{^{\Diamond}}\sum_{d \pmod{p}} \sum_{\mbf{\eps} \in (\mb{Z}/p^{n}\mb{Z})^{|T|} \atop \mbf{\eps} \neq \mbf{0}} \phi(\mbf{\eps}) \sum_{b \pmod{p^{n}} \atop b \equiv d \pmod{p}} e_{p^{n}}(f_{T,\mbf{\eps}}(b)),
$$
where $T = T_{\mbf{h},\mbf{h}'}$ is as above. The estimate we prove in this case is as follows.
\begin{prop}\label{prop:RicRoyEst}
Let $\mbf{\eps} \neq \mbf{0}$. Assume that $n \leq (N+M)^32^{N+M}$. Then
$$
\sideset{}{^{\Diamond}} \sum_{d \pmod{p}} \sum_{b \pmod{p^n} \atop b \equiv d \pmod{p}} e_{p^n}(f_{T,\mbf{\eps}}(b)) \ll_{\e,N,M} p^{n-\frac{1}{n^2(n-1)^2}+\e}.
$$
\end{prop}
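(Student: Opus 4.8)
The plan is to adapt the strategy of Ricotta--Royer \cite{RicRoy} --- turn the sum into a complete exponential sum with a \emph{polynomial} phase modulo a power of $p$ --- but, as indicated in the introduction, to replace the van der Corput--Weyl step by the Weyl-type bound that follows from the resolution of the Vinogradov Mean Value Conjecture (Bourgain--Demeter--Guth \cite{BDG}, Wooley \cite{Woo}), which is stronger when $n\ll_{N,M}1$. First I would fix $d$ satisfying $(\Diamond)$ and write $b=d+pt$ with $t$ ranging over $\mb Z/p^{n-1}\mb Z$. Since $(\Diamond)$ forces $p\nmid d+\tau$, I can Taylor-expand the cube-root phase via $s(\bar{2a}(b+\tau))^2=s(\bar{2a}(d+\tau))^2\bigl(1+pt/(d+\tau)\bigr)^{2/3}$ and the $p$-adically convergent series \eqref{eq:ppowSer}; because $\nu_p\!\bigl(\binom{2/3}{l}p^l\bigr)\ge l$ (using $p>3$ and Lemma~\ref{lem:nuPbinom}), all terms with $l\ge n$ vanish modulo $p^n$, so
$$
f_{T,\mbf\eps}(d+pt)\equiv g_0(d)+p\sum_{l=1}^{n-1}\delta_l(d)\,t^l\pmod{p^n},\qquad \delta_l(d)\in\mb Z_p,
$$
for explicit coefficients $\delta_l(d)$, and hence $\sum_{b\equiv d\,(p)}e_{p^n}(f_{T,\mbf\eps}(b))=e_{p^n}(g_0(d))\sum_{t\bmod p^{n-1}}e_{p^{n-1}}\!\bigl(\sum_{l=1}^{n-1}\delta_l(d)t^l\bigr)$, a complete polynomial exponential sum of degree $\le n-1$ modulo $p^{n-1}$.

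The heart of the matter is a non-degeneracy statement for $h_d(t):=\sum_{l=1}^{n-1}\delta_l(d)t^l$, so that the complete sum genuinely exhibits cancellation rather than being trivially of size $p^{n-1}$. Writing out $\delta_l(d)=c\,1_{l=1}+3a\,p^{l-1}\binom{2/3}{l}\sum_{\tau\in T}\eps_\tau(d+\tau)^{-l}v_\tau$ with $v_\tau:=s(\bar{2a}(d+\tau))^2\in\mb Z_p^\times$, the content $\sigma(d):=\min_{1\le l\le n-1}\nu_p(\delta_l(d))$ is controlled by the $p$-divisibility of the weighted power sums $\sum_{\tau}\eps_\tau(d+\tau)^{-l}v_\tau$ for $1\le l\le|T|$. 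From $\mbf\eps\neq\mbf 0$ together with $\phi(\mbf\eps)\neq0$ (which, via Lemma~\ref{lem:pval}, forces $\min_\tau\nu_p(\eps_\tau)\le\lceil\log(20(N+M)^3)/\log p\rceil$) and a Vandermonde/Wronskian analysis of the matrix $\bigl((d+\tau)^{-l}\bigr)_{1\le l\le|T|,\ \tau\in T}$, I would deduce a bound of the form $\sigma(d)\le n-1-\theta$ with $\theta=\theta(n,N,M)\ge1$, valid for every admissible $d$ apart from an exceptional set whose total contribution is absorbed by the error term. I expect this to be the main obstacle: unlike the large-$n$ regime of Proposition~\ref{prop:largeN}, here there is \emph{no} $p$-adic separation hypothesis on the elements of $T$, so collisions of the $\tau$ modulo high powers of $p$ and the resulting degeneracies of these Vandermonde determinants must be dealt with head-on, and it is precisely the weakness of the quantitative control one can extract in that situation that is responsible for the feeble final exponent.

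Granting such a content bound, I would factor out $p^{\sigma(d)}$ to reduce $\sum_{t\bmod p^{n-1}}e_{p^{n-1}}(h_d(t))$ to $p^{\sigma(d)}$ times a complete sum modulo $p^{m}$, $m=n-1-\sigma(d)\ge1$, whose phase still has a unit among its coefficients of degrees $1,\dots,k$ (here $k\le n-1$ is the degree), and then apply the sharp Weyl inequality coming from \cite{BDG,Woo}: for a polynomial of degree $k$ with such a primitive coefficient vector one has $\sum_{t\bmod p^m}e_{p^m}(\cdot)\ll_{k,\e}p^{m(1-c_k)+\e}$ with $c_k\gg 1/k^2$ (for $m=1$ one instead invokes Weil, the reduction mod $p$ being a non-constant polynomial). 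Bookkeeping the bound $p^{\sigma(d)+m(1-c_k)+\e}$ over the admissible ranges of $m$ and $k\le n-1$, and then summing trivially over the $\le p$ residues $d$ satisfying $(\Diamond)$, yields the exponent $n-\tfrac1{n^2(n-1)^2}+\e$; the numerology reflects the worst case, in which the degree is close to $n-1$ while the modulus $p^m$ left for running Vinogradov is only a small power of $p$, so that the gain per residue class degrades to $\approx p^{1/(n^2(n-1)^2)}$. Finally, the case $p\le 20(N+M)^3$ (where Lemma~\ref{lem:pval} may fail) needs no argument at all: the trivial bound $\ll p^n$ is then $\ll_{N,M,\e}p^{n-1/(n^2(n-1)^2)+\e}$, since $p$ is bounded in terms of $N,M$ and $n$ is bounded in terms of $N,M$ by hypothesis.
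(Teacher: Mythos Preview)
Your overall strategy---Taylor-expand $f_{T,\mbf{\eps}}(d+pt)$ into a polynomial in $t$ modulo $p^n$ and invoke the Vinogradov mean value theorem---matches the paper's, but your non-degeneracy step has a genuine gap. You propose to bound the content $\sigma(d)=\min_{l}\nu_p(\delta_l(d))$ by inverting the Vandermonde-type matrix $\bigl((d+\tau)^{-l}\bigr)_{1\le l\le |T|,\,\tau\in T}$. The determinant of that matrix carries a factor $\prod_{\tau\neq\tau'}(\tau-\tau')$, and in this proposition there is \emph{no} separation hypothesis on the elements of $T$ (contrast Proposition~\ref{prop:largeN}). The shifts $\tau\in T$ may be pairwise congruent modulo an arbitrarily high power of $p$, so the Vandermonde can be $p$-adically degenerate to any order, \emph{uniformly in $d$}. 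Hence a bound $\sigma(d)\le n-1-\theta$ with $\theta=\theta(n,N,M)\ge 1$ cannot be extracted from this linear-algebra route, and the ``exceptional set of $d$'' you invoke is not where the obstruction lives.

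The paper sidesteps this entirely. Rather than bounding the content, it uses Vinogradov's method in the concrete form of \cite[\S8.5]{IK}: a bilinear shift-and-average, the kernels $D(\alpha,Y)$, and then Theorem~\ref{thm:dec} for $J_{\ell,n-1}$. This setup extracts the saving from the existence of a \emph{single} index $\mf{D}\ge 2$ with $\nu_p(A_{\mf{D}})<n$, with no need to control the full content. The residual cases $\mf{D}\in\{0,1\}$---where the first or second ``derivative'' coefficient vanishes modulo $p$---are disposed of by Lemma~\ref{lem:RicRoy}: for $j\in\{1,2\}$ and any $w\in\mb{Z}/p\mb{Z}$, the number of $d\pmod p$ satisfying $(\Diamond)$ with $\sum_{\tau}\tilde{\eps}_\tau\, s(\bar{2a}(d+\tau))^{2-3j}\equiv w\pmod p$ is $O_{|T|}(1)$. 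That lemma is proved by exhibiting a non-zero polynomial of degree $\le |T|3^{|T|-1}$ over $\mb{F}_p$ (or a quadratic extension) whose roots contain the bad $d$'s; the argument is purely mod $p$ and is completely insensitive to higher $p$-adic collisions among the $\tau$. It is this counting lemma, not any Vandermonde estimate, that supplies the missing non-degeneracy input in your sketch.
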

\begin{rem}
Note that the upper bound is trivially satisfied for $p = O_{N,M}(1)$, by choosing a suitable constant. We make no attempt to specify this dependence in the sequel, though this could be done in principle provided that one had an effective bound in the critical case $s = \frac{1}{2}k(k+1)$ in Vinogradov's mean value theorem (see Theorem \ref{thm:dec}). For some work in this direction for $s$ slightly larger, see \cite{Stei}.
\end{rem}
In preparation for the proof of Proposition \ref{prop:RicRoyEst}, note that from the identity \eqref{eq:ppowSer} for $|x|_p < 1$, given $b = d + pt$ with $t \in \mb{Z}/p^{n-1}\mb{Z}$ we can write
\begin{align*}
f_{T,\mbf{\eps}}(d+pt) &= (d+pt)c + \sum_{\tau \in T} \eps_{\tau} s(\bar{2a}(d+\tau + pt))^2 \equiv \sum_{l = 0}^{n-1} t^l a_l(\mbf{\eps},d) \pmod{p^{n}} \\
&=: P_{\mbf{\eps},d}(t) \pmod{p^{n}},
\end{align*}
where we have put
$$
a_l(\mbf{\eps},d) := p^l \left(\binom{2/3}{l}(2a)^l \sum_{\tau \in T} \eps_{\tau}s(\bar{2a}(d+\tau))^{2-3l} + cd^{1-l}1_{l \in \{0,1\}}\right).
$$
Note that $P_{\mbf{\eps},d}(t)$ is a polynomial in $t$ modulo $p^{n-1}$. To evaluate the exponential sum over $b$ (and thus over $t \pmod{p^{n-1}}$), we will roughly speaking split the set of $d$ according to the degree (modulo $p^n$) of $P_{\mbf{\eps},d}(t)$ and use bounds for Weyl sums of that degree. As we will not be able to extract cancellation when $P_{\mbf{\eps},d}$ has degree 0 or 1, we will need to check that the number of $d$ (satisfying $(\Diamond)$) for which this happens is small. To this end, we
need the following lemma, which is a modification of Proposition 4.8 of \cite{RicRoy}.
\begin{lem} \label{lem:RicRoy}
Let $w \in \mb{Z}/p\mb{Z}$ and $\tilde{T} \subset \mb{Z}/p\mb{Z}$. Let also $\tilde{\mbf{\eps}} \in (\mb{Z}/p\mb{Z})^{|\tilde{T}|} \bk \{\mbf{0}\}$. Then for $j = 1,2$,
$$
|\{d \pmod{p} : 4a^2(d+\tau) \in (\mb{Z}/p\mb{Z})^{\times 3} \text{ $\forall \ \tau \in \tilde{T}$ and } \sum_{\tau \in \tilde{T}} \tilde{\eps}_{\tau} s(\bar{2a}(d+\tau))^{2-3j} \equiv w \pmod{p}\}|
$$
is $O_{|\tilde{T}|}(1)$.
\end{lem}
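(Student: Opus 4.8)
The plan is to read the count as the number of $\overline{\mb{F}}_p$-points of an explicit affine variety of degree bounded in terms of $|\tilde T|$, and then invoke a B\'ezout-type bound. Fix $j\in\{1,2\}$ and set $m:=3j-2\in\{1,4\}$. Since $\tilde{\mbf{\eps}}\neq\mbf{0}$, choose $\tau_0\in\tilde T$ with $\tilde\eps_{\tau_0}\neq 0$, and for $\tau\in\tilde T\setminus\{\tau_0\}$ put $c_\tau:=\bar{2a}(\tau-\tau_0)\in\mb{F}_p^\times$ (nonzero because the elements of $\tilde T$ are distinct and $p>3$, so $2a$ is invertible mod $p$). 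To each residue class $d\bmod p$ satisfying the two conditions in the lemma, associate $\mbf{x}=(x_\tau)_{\tau\in\tilde T}$ with $x_\tau:=s(\bar{2a}(d+\tau))\in\mb{F}_p^\times$; this is well defined since $\bar{2a}(d+\tau)$ is a nonzero cube whenever $4a^2(d+\tau)$ is. As $s$ is a fixed branch of the cube root, $x_\tau^3=\bar{2a}(d+\tau)$, hence $x_\tau^3-x_{\tau_0}^3=c_\tau$ for all $\tau\neq\tau_0$, and the congruence $\sum_\tau\tilde\eps_\tau s(\bar{2a}(d+\tau))^{2-3j}\equiv w$ becomes $\sum_\tau\tilde\eps_\tau x_\tau^{-m}=w$, i.e. $g(\mbf{x})=0$ where
\[
g(\mbf{x}):=\sum_{\tau\in\tilde T}\tilde\eps_\tau\prod_{\tau'\in\tilde T,\,\tau'\neq\tau}x_{\tau'}^{\,m}\;-\;w\prod_{\tau'\in\tilde T}x_{\tau'}^{\,m}\ \in\ \mb{F}_p[\mbf{x}].
\]
Since $d=2a\,x_{\tau_0}^3-\tau_0$ is recovered from $\mbf{x}$, the map $d\mapsto\mbf{x}$ is injective, so it suffices to bound $\#W(\mb{F}_p)$, where $W\subseteq\mb{A}^{|\tilde T|}_{\overline{\mb{F}}_p}$ is cut out by the $|\tilde T|-1$ equations $x_\tau^3-x_{\tau_0}^3=c_\tau$ together with $g=0$.

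The key step is to show $W$ is finite (zero-dimensional); then a B\'ezout-type bound for the number of points of a zero-dimensional affine scheme defined by polynomials of degrees $3,\dots,3,\deg g\le m|\tilde T|$ yields $\#W(\overline{\mb{F}}_p)\le 3^{|\tilde T|-1}\,m|\tilde T|=O_{|\tilde T|}(1)$, which dominates $\#W(\mb{F}_p)$ and proves the lemma. Suppose, for contradiction, that $W$ contains an irreducible curve $C$, and let $V\supseteq W$ be the variety defined by the cubic equations alone. Using $x_\tau^3=x_{\tau_0}^3+c_\tau$ to reduce monomials, $\overline{\mb{F}}_p[V]$ is module-finite over $\overline{\mb{F}}_p[x_{\tau_0}]$, so the coordinate projection $\pi:V\to\mb{A}^1$, $\mbf{x}\mapsto x_{\tau_0}$, is a finite (in particular proper, surjective) morphism; the same then holds for its restriction $\pi|_C:C\to\mb{A}^1$.

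If $x_{\tau_0}$ were constant on $C$, then each $x_\tau^3=x_{\tau_0}^3+c_\tau$ would be constant on $C$, forcing $C$ into a finite set, contradicting $\dim C=1$. Hence $x_{\tau_0}$ is non-constant on $C$, so $\pi|_C$ is finite and non-constant, hence surjective, and we may pick $P\in C$ with $x_{\tau_0}(P)=0$. Every monomial occurring in $g$ other than $\tilde\eps_{\tau_0}\prod_{\tau'\neq\tau_0}x_{\tau'}^{\,m}$ contains a factor $x_{\tau_0}^{\,m}$ with $m\ge 1$, hence vanishes at $P$; therefore $g(P)=\tilde\eps_{\tau_0}\prod_{\tau'\neq\tau_0}x_{\tau'}(P)^{\,m}$. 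Moreover $x_{\tau'}(P)^3=c_{\tau'}\neq 0$ for $\tau'\neq\tau_0$, so each $x_{\tau'}(P)\neq 0$, and with $\tilde\eps_{\tau_0}\neq 0$ this gives $g(P)\neq 0$, contradicting $P\in C\subseteq W\subseteq\{g=0\}$. Thus $W$ is finite, as required. The only real subtlety is the properness of $\pi:V\to\mb{A}^1$: it is exactly what guarantees that the fiber over $x_{\tau_0}=0$ is nonempty along $C$ (a priori $x_{\tau_0}$ could "escape to infinity" on $C$, as with $xy=1$ in $\mb{A}^2$), and the explicit integral dependence $x_\tau^3=x_{\tau_0}^3+c_\tau$ is what rules this out.
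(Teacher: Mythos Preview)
Your proof is correct and takes a genuinely different route from the paper's. The paper proceeds by an explicit resultant-type construction: it forms the product
\[
Q_w(\mbf{a})=\prod_{\mbf{j}\in\{-1,0,1\}^{|\tilde T|}}\Bigl(w-\sum_{\tau}U_0^{j_\tau}a_\tau\Bigr),
\]
observes that this is invariant under multiplying each $a_\tau$ by a cube root of unity and hence is a polynomial in the $a_\tau^3$, and then specializes $a_\tau=\tilde\eps_\tau\,s((2a)^{-1}(Y+\tau))^{-1}$ to produce a single-variable polynomial $\tilde R_w(Y)$ of degree $\le|\tilde T|\,3^{|\tilde T|-1}$ whose roots contain every admissible $d$. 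Nonvanishing of $\tilde R_w$ is checked by a direct evaluation (the leading coefficient when $w\neq 0$, and the value at $Y=-\tau_0$ when $w=0$).

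Your approach instead parametrizes the problem by the cube roots $x_\tau=s(\bar{2a}(d+\tau))$ themselves, passes to the affine variety $W$ cut out by the relations $x_\tau^3-x_{\tau_0}^3=c_\tau$ together with the cleared-denominator equation $g=0$, and shows $W$ is zero-dimensional via a clean geometric argument: the integral dependence of each $x_\tau$ on $x_{\tau_0}$ makes the projection to the $x_{\tau_0}$-line finite, so any putative curve in $W$ must hit the fibre over $x_{\tau_0}=0$, where $g$ is forced to be nonzero. The B\'ezout bound then finishes. Both arguments ultimately hinge on the same nondegeneracy input---that $\tilde\eps_{\tau_0}\neq 0$ and the $c_\tau$ are nonzero---but yours isolates it geometrically rather than through an explicit polynomial identity. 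Your argument also treats $j=1,2$ uniformly via the parameter $m=3j-2$, whereas the paper handles $j=1$ in detail and asserts $j=2$ is analogous; and it avoids the case split on $p\bmod 3$ (and the passage to $\F_p[X]/(X^2+X+1)$) that the paper uses to accommodate the cube roots of unity.
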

\begin{proof}
The proof when $j = 2$ is completely similar to that of $j = 1$, so we focus only on the latter case. \\
As in the proof of Lemma \ref{lem:DiamBd}, we translate the problem to $\mb{F}_p$, so that e.g., $\tilde{T}$ is identified with a subset of $\mb{F}_p$. Put $\mb{F} := \mb{F}_p$ if $p \equiv 1 \pmod{3}$, and $\mb{F} := \mb{F}_p[X]/(X^2+X+1)\mb{F}_p[X]$ when $p \equiv 2 \pmod{3}$. Define also
$$
N_{\tilde{T}}(w) := |\{d \in \mb{F} : 4a^2(d+\tau) \in \mb{F}^{\times 3} \text{ $\forall \ \tau \in \tilde{T}$ and } \sum_{\tau \in \tilde{T}} \tilde{\eps}_{\tau} s((2a)^{-1}(d+\tau))^{-1} = w\}|.
$$
When $p \equiv 1 \pmod{3}$, $N_{\tilde{T}}(w)$ is precisely the count on the left-hand side in the statement of the lemma; when $p \equiv 2 \pmod{3}$, $N_{\tilde{T}}(w)$ is an upper bound for the desired quantity, since $\mb{F}_p \subseteq \mb{F}$.
In what follows we let $U_0$ denote a primitive cube root of unity in $\mb{F}$; naturally, this satisfies $1+U_0 + U_0^2 = 0$. \\
Let $\mbf{a} \in \mb{F}_p^{|\tilde{T}|}$. Consider the product
$$
Q_w(\mbf{a}) := \prod_{\mbf{j} \in \{-1,0,1\}^{|\tilde{T}|}} \left(w - \sum_{\tau \in \tilde{T}} U_0^{j_{\tau}} a_{\tau}\right),
$$
which is a polynomial of total degree $\leq 3^{|\tilde{T}|}$ in the variables $(a_{\tau})_{\tau \in \tilde{T}} \in \mb{F}_p^{|\tilde{T}|}$. Note that $Q_w(\mbf{a}) = Q_w((a_{\tau}U_0^{j_{\tau}})_{\tau \in \tilde{T}})$ for all $\mbf{j} \in \{-1,0,1\}^{|\tilde{T}|}$. This implies that we can find a polynomial $\tilde{Q}_w$, defined over $\mb{F}$, such that\footnote{This can be seen e.g., by noting that 
$$
3Q_w(\mbf{a}) = Q_w((a_{\tau})_{\tau}) + Q_w((a_{\tau} U_0^{1_{\tau'=\tau}})_{\tau}) + Q_w((a_{\tau}U_0^{21_{\tau'=\tau}})_{\tau}),
$$
then expanding the product and noting that only terms in $a_{\tau'}^3$ survive, for each $\tau' \in T$.} $Q_w(\mbf{a}) = \tilde{Q}_w((a_{\tau}^3)_{\tau \in \tilde{T}})$. In particular, we can write
$$
Q_w(\mbf{a}) = \sum_{r_1,\ldots,r_{|\tilde{T}|} \geq 0 \atop r_1 + \cdots + r_{|\tilde{T}|} \leq 3^{|\tilde{T}|-1}} b_{r_1,\ldots,r_{|\tilde{T}|}}(w) a_{\tau_1}^{3r_1} \cdots a_{\tau_{|\tilde{T}|}}^{3r_{|\tilde{T}|}},
$$
where $\{\tau_1,\ldots,\tau_{|\tilde{T}|}\}$ is an enumeration of $\tilde{T}$. Now, define
\begin{align*}
\tilde{R}_w(Y) &:= \left(\prod_{\tau \in \tilde{T}}(Y+\tau)^{3^{|\tilde{T}|-1}}\right) \cdot Q_w((\tilde{\eps}_{\tau}s((2a)^{-1}(Y+\tau))^{-1})_{\tau \in \tilde{T}}) \\
&= \sum_{r_1,\ldots,r_{|\tilde{T}|} \geq 0 \atop r_1 + \cdots + r_{|\tilde{T}|} \leq 3^{|\tilde{T}|-1}} b_{r_1,\ldots,r_{|\tilde{T}|}}(w) \prod_{1 \leq j \leq |\tilde{T}|} (2a \tilde{\eps}_{\tau_j}^3)^{r_j} \cdot \prod_{1 \leq j \leq |\tilde{T}|} (Y+\tau_j)^{3^{|\tilde{T}|-1}-r_j},
\end{align*}
which is a polynomial in $Y$ of degree $\leq |\tilde{T}|3^{|\tilde{T}|-1} \ll_{|\tilde{T}|} 1$ over $\mb{F}$. Every $d \in \mb{F}$ counted by $N_{\tilde{T}}(w)$ is a root of $\tilde{R}_w(Y)$ over $\mb{F}$, since it is a root of $Q_w((\eps_{\tau}s((2a)^{-1}(Y+\tau))^{-1})_{\tau \in \tilde{T}})$. It follows that, provided $\tilde{R}_w(Y)$ is a non-zero polynomial, we get $N_{\tilde{T}}(w) \ll_{|\tilde{T}|} 1$ as required. It therefore suffices to show that $\tilde{R}_w(Y)$ is non-zero. \\
Assume first that $w \neq 0$. The leading coefficient (in $Y$) of $R_w(Y)$ is $b_{0,\ldots,0}(w) = w^{3^{|\tilde{T}|}} \neq 0$, so that $\tilde{R}_w(Y)$ is necessarily non-zero in this case. \\
Suppose next that $w = 0$. Note that by hypothesis there is a $\tau_1 \in \tilde{T}$ such that $\tilde{\eps}_{\tau_1} \neq 0$. 
Setting $Y = -\tau_1$ and expanding the product we see that only the term with $r_1 = 3^{|\tilde{T}|-1}$ (and $r_j = 0$ for $j \neq 1$) survives, this term arising as the monomial in $a_{\tau_1}^{3^{|\tilde{T}|}}$ in $Q_0(\mbf{a})$. Explicitly, this term has coefficient
$$
[a_{\tau_1}^{3^{|\tilde{T}|}}] Q_0(\mbf{a}) = \prod_{\mbf{j} \in \{-1,0,1\}} (-U_0^{j_{\tau_1}}) = -1.
$$
It follows that $b_{3^{|\tilde{T}|-1},0,\ldots,0} = -(2a\tilde{\eps}_{\tau_1}^3)^{3^{|\tilde{T}|-1}}$, and thus
$$
\tilde{R}_0(-\tau_1) = -((2a)\tilde{\eps}_{\tau_1}^3)^{3^{|\tilde{T}|-1}} \in \mb{F}^{\times}.
$$
The claim thus follows when $w \neq 0$ as well. 
\end{proof}
For larger degree polynomials we need bounds for Weyl sums. In contrast to the work in \cite{RicRoy}, where Weyl differencing is used to obtain cancellation, we will instead apply Vinogradov's method in order to obtain a stronger Weyl sum estimate. For this, we recall the Vinogradov main conjecture, proved in the groundbreaking work of Bourgain, Demeter and Guth \cite{BDG} (and independently in the work of Wooley \cite{Woo}).
\begin{thm}[Theorem 1.1 of \cite{BDG}]
\label{thm:dec}
Let $k \geq 1$, $P \geq 1$. Given $\mbf{x} \in [0,1]^k$ put
$$
f_k(\mbf{x},P) := \sum_{1 \leq n \leq P} e(x_1n + x_2n^2 + \cdots + x_kn^k).
$$
Furthermore, for $s \in \mb{N}$ define
$$
J_{s,k}(P) := \int_{[0,1]^k} |f_k(\mbf{x},P)|^{2s} d\mbf{x}.
$$
Then 
$$
J_{s,k}(P) \ll_{\e} P^{\e}\left(P^s + P^{2s-k(k+1)/2}\right).
$$
\end{thm}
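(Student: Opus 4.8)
The plan is to deduce Theorem \ref{thm:dec} from the $\ell^2$-decoupling theorem for the moment curve, which is the genuinely deep ingredient; the rest is bookkeeping. First I would reduce to the critical exponent $s_0 := k(k+1)/2$, where both terms on the right-hand side equal $P^{s_0}$. When $s \le k$ the bound $J_{s,k}(P) \ll P^s$ is elementary: by Newton's identities, for $s\le k$ the power sums $\sum_i n_i^j$, $1\le j\le k$, determine the multiset $\{n_1,\dots,n_s\}$, so every solution of the Vinogradov system is diagonal. For $k < s < s_0$, since $[0,1]^k$ carries probability measure, Hölder gives $\|f_k(\cdot,P)\|_{2s} \le \|f_k(\cdot,P)\|_{2s_0}$, hence $J_{s,k}(P) \le J_{s_0,k}(P)^{s/s_0}$, so the critical estimate transfers with at most a slightly larger power of $P^{\e}$. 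For $s > s_0$ one pulls out $2(s-s_0)$ variables and bounds their contribution trivially by $P^{2(s-s_0)}$, giving $J_{s,k}(P) \le P^{2(s-s_0)} J_{s_0,k}(P) \ll_{\e} P^{\e}P^{2s-s_0} = P^{\e}P^{2s-k(k+1)/2}$. Everything is thus reduced to proving $J_{s_0,k}(P) \ll_{\e} P^{s_0+\e}$.

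Second, I would invoke the standard ``decoupling implies Vinogradov'' mechanism (Bourgain's periodization trick). Let $Eg(\mbf{x}) := \int_0^1 g(t)\, e(x_1 t + x_2 t^2 + \cdots + x_k t^k)\,dt$ be the extension operator attached to the moment curve $\gamma(t)=(t,t^2,\dots,t^k)$. For a box $B \subset \R^k$ of dimensions $P \times P^2 \times \cdots \times P^k$ (adapted to the natural anisotropic scaling), a periodicity-and-rescaling argument shows that $\int_{[0,1]^k}|f_k(\mbf{x},P)|^{2s_0}\,d\mbf{x}$ is comparable, up to a factor $P^{\e}$, to $|B|^{-1}\int_B |Eg(\mbf{x})|^{2s_0}\,d\mbf{x}$, where $g$ is a sum of $L^\infty$-normalized bumps at the $P^{-1}$-separated points $n/P$, $1\le n\le P$, and one sets $\delta := P^{-k}$ so that $\delta^{1/k}=P^{-1}$. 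The $\ell^2$-decoupling theorem for $\gamma$ at the critical Lebesgue exponent $p := k(k+1) = 2s_0$, applied to the partition of $[0,1]$ into the $P$ arcs $\theta$ of length $\delta^{1/k}$, states that $\|Eg\|_{L^p(w_B)} \ll_{\e} \delta^{-\e}\big(\sum_\theta \|Eg_\theta\|_{L^p(w_B)}^2\big)^{1/2}$ for a weight $w_B$ adapted to $B$. Each $g_\theta$ carries a single bump, so $\|Eg_\theta\|_{L^p(w_B)} \ll |B|^{1/p}$; inserting this, the $\ell^2$ sum over the $P$ caps contributes a factor $P^{1/2}$, and raising to the $p$-th power produces precisely the exponent $P^{p/2}=P^{k(k+1)/2}=P^{s_0}$ after unwinding the normalizations. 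This yields $J_{s_0,k}(P)\ll_{\e} P^{s_0+\e}$, as required.

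Third --- and this is the main obstacle --- I would prove the $\ell^2$-decoupling theorem for the moment curve, which is the heart of \cite{BDG}. The proof runs a multi-scale induction that inducts \emph{simultaneously} on the dimension $k$ and on the scale, and combines three ingredients: (i) the multilinear Kakeya / Brascamp--Lieb inequality of Bennett--Carbery--Tao (and Guth), which supplies the transversality gain once the curve is cut into well-separated arcs; (ii) a ``ball-inflation'' step, which relates decoupling for $\gamma$ at a fine scale to lower-dimensional decoupling for the degree-$(k-1)$ truncation of the moment curve at coarser scales; and (iii) a bootstrapping iteration (the Bourgain--Guth multilinear-to-linear reduction together with parabolic rescaling) that takes any admissible decoupling exponent and forces it down to $0$ in the limit. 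The delicate points are quantifying the non-degeneracy of $\gamma$ (via non-vanishing of Wronskian-type determinants) so that the lower-dimensional contributions can be absorbed, and arranging the nested induction to close with only an $\e$-loss at each stage. An alternative route, due to Wooley \cite{Woo}, bypasses decoupling entirely and establishes $J_{s_0,k}(P)\ll_{\e}P^{s_0+\e}$ by \emph{efficient congruencing}: one exploits that solutions of the Vinogradov system are heavily constrained modulo powers of a well-chosen prime $p \asymp P^{1/k}$, and iterates a congruencing step that trades a controlled factor in the solution count for an improvement in a $p$-adic conditioning parameter, continuing until the diagonal and off-diagonal contributions coincide.
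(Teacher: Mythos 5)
This statement is not proved in the paper at all: it is quoted verbatim as Theorem 1.1 of \cite{BDG} (the resolution of the Vinogradov main conjecture) and used as a black box in the proof of Proposition \ref{prop:RicRoyEst}. So there is no internal argument to compare yours against; the only question is whether your proposal would stand on its own.

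Your first two paragraphs are correct and standard. The reduction to the critical exponent $s_0=k(k+1)/2$ is exactly right: for $s\le k$ the Newton's-identities argument shows all solutions of the Vinogradov system are diagonal, giving $J_{s,k}(P)\ll_s P^s$; for $k<s<s_0$ the monotonicity of $L^p$ norms on the probability space $[0,1]^k$ gives $J_{s,k}(P)\le J_{s_0,k}(P)^{s/s_0}$; and for $s>s_0$ the trivial bound $|f_k|\le P$ on the excess $2(s-s_0)$ factors does the job. The transference from $\ell^2$-decoupling for the moment curve at scale $\delta=P^{-k}$ to the mean value estimate (periodization, one $P^{-1}$-separated frequency per arc, unwinding the normalizations) is also the standard mechanism and is described accurately, modulo the usual bookkeeping with the weight $w_B$. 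The genuine gap is the third paragraph: naming the ingredients of the Bourgain--Demeter--Guth induction (multilinear Kakeya/Brascamp--Lieb, ball inflation, the multilinear-to-linear bootstrap with parabolic rescaling) or of Wooley's efficient congruencing is a roadmap, not a proof, and essentially the entire difficulty of the theorem lives there. As a blind attempt your write-up therefore does not establish the statement; it correctly reduces it to, and correctly locates, the deep external input that the paper itself simply cites. For the purposes of this paper the honest course is the one the author takes: import the critical-exponent estimate from \cite{BDG} (or \cite{Woo}) and, if desired, record your reduction steps as the routine derivation of the stated form of the bound from the critical case.
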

\begin{proof}[Proof of Proposition \ref{prop:RicRoyEst}]
\noindent Recall that
$$
P_{\mbf{\eps},d}(t) = \sum_{0 \leq j \leq n-1} a_j(\mbf{\eps},d)t^j \text{ for } 0 \leq t \leq p^{n-1}-1.
$$
We wish to estimate
$$
\sideset{}{^{\Diamond}}\sum_{d\pmod{p}} \sum_{t \pmod*{p^{n-1}}} e_{p^n}\left(P_{\mbf{\eps},d}(t)\right).
$$ 
Let $R \geq 1$, $N:= \frac{1}{2}p^{n-1}$. Also, put 
$$
P_{\mbf{\eps},d,t}(z) := P_{\mbf{\eps},d}(t+z) = \sum_{0 \leq i \leq n-1} z^i\sum_{i \leq j \leq n-1} \binom{j}{i} t^{j-i} a_j(\mbf{\eps},d) = \sum_{0 \leq i \leq n-1} A_i(\mbf{\eps},d,t)z^i,
$$
where we have defined
$$
A_i(\mbf{\eps},d,t) := p^i \sum_{i \leq j \leq n-1} (pt)^{j-i} \binom{j}{i}\left(\binom{2/3}{i} (2a)^j\sum_{\tau \in T} \eps_{\tau}s(\bar{2a}(d+\tau))^{2-3j} + cd^{1-j}1_{j \in \{0,1\}}\right).
$$
We define
$$
F_{\mbf{\eps}}(d) := R^{-2} \sum_{t \pmod{p^n}} S_{\mbf{\eps},d,t}(R) := R^{-2}\sum_{|t| \leq N} \sum_{1 \leq y,z \leq R} e_{p^n}(P_{\mbf{\eps},d,t}(yz)) + O(R^2).
$$
Set $\ell := n(n-1)/2$. We invoke Vinogradov's method, as exposed in Section 8.5 of \cite{IK}. Given $\alpha \in \mb{R}$ and $Y \geq 1$, define
$$
D(\alpha,Y) := Y^{-2} \sum_{|m| \leq Y} \left|\sum_{|n| \leq Y} e(\alpha mn)\right|.
$$
Now, for each $|t| \leq \frac{1}{2}p^{n-1}$, we obtain (cf. (8.76) there)
\begin{align}\label{eq:VinApp}
|S_{\mbf{\eps},d,t}(R)| \leq \left(\ell^{2(n-1)}R^{4\ell(\ell-1)+n(n-1)} J_{\ell,n-1}^2(R)\Delta(t)\right)^{\frac{1}{2\ell^2}},
\end{align}
where we have written
$$
\Delta(t) := \prod_{1 \leq h \leq n-1} D(A_h(\mbf{\eps},d,t)/p^n,\ell R^h).
$$
Our goal is to extract savings over the trivial bound $\Delta(t) \ll 1$. To this end, we consider $D(\alpha,Y)$ for $\alpha$ a $p$-adic rational. Put $\alpha = a/p^s$, for $(a,p) = 1$ and $s \geq 1$. We obtain
\begin{align*}
D(a/p^s,Y) &\ll Y^{-2} \sum_{|m| \leq Y} \min\{Y,\|am/p^s\|^{-1}\} \\
&= Y^{-2}\sum_{0 \leq r \leq s} \sum_{1 \leq |u| \leq \max\{1,\frac{1}{2}p^{s-r}\} \atop p \nmid u}  \sum_{|m| \leq Y \atop am \equiv up^r \pmod{p^s}} \min\{Y,p^{s-r}/|u|\} \\
&= Y^{-2}\sum_{0 \leq r \leq s-1} p^{s-r} \sum_{1 \leq |u| \leq \frac{1}{2}p^{s-r} \atop p \nmid u} \frac{1}{|u|} \sum_{|m| \leq Y \atop am \equiv up^r \pmod{p^s}} 1 + Y^{-1} \sum_{|m| \leq Y \atop p^s|m} 1 \\
& \ll Y^{-1}(p^s/Y+1)\log(p^s) + p^{-s},
\end{align*}
in this case. We specialize $Y = \ell R^h$ and 
$$
\alpha = A_h(\mbf{\eps},d,t)/p^n = \tilde{A}_h/p^{n-\theta_h}, 
$$
where $\theta_h = \theta_h(\mbf{\eps},d,t) := \nu_p(A_h(\mbf{\eps},d,t))$ and $p \nmid \tilde{A}_h$, for each $1 \leq h \leq n-1$. Note that $\theta_h(\mbf{\eps},d,t) \geq h$, with equality if, and only if, we have
$$
(2a)^h\binom{2/3}{h} \sum_{\tau \in T} \eps_{\tau} s(\bar{2a}(d+\tau))^{2-3h} \not \equiv -c1_{h = 1} \pmod{p},
$$
this condition being independent of $t$.\\
For $\mbf{\eps},d$ and $t$ fixed, let 
$$
\mf{D} = \mf{D}(\mbf{\eps},d,t) := \max\{1 \leq j \leq n-1 : \theta_j(\mbf{\eps},d,t) < n\},
$$ 
if this maximum is defined, and let $\mf{D}(\mbf{\eps},d,t) = 0$ otherwise. If $\mf{D} \geq 1$ then we obtain
$$
\Delta(t) \ll_{n} R^{-\mf{D}}(p^{n-\mf{D}}R^{-\mf{D}} + 1)\log p + p^{\theta_{\mf{D}}-n}.
$$
Suppose now that $\mf{D} \geq 2$. Applying Theorem \ref{thm:dec}, we obtain from \eqref{eq:VinApp} that
\begin{align*}
|S_{\mbf{\eps},d,t}(R)| &\ll_{\e,n} \left(R^{4\ell(\ell-1)+n(n-1)+\e}\left(R^{2\ell} + R^{4\ell - n(n-1)}\right)\left(R^{-\mf{D}}(p^{n-\mf{D}}R^{-\mf{D}}+1) + p^{\theta_{\mf{D}}-n}\right)\right)^{\frac{1}{2\ell^2}} \\
&\ll R^{2+\e}\left(R^{-\mf{D}/2\ell^2}((p^{n-\mf{D}}/R^{\mf{D}})^{1/2\ell^2}+1) + p^{-(n-\theta_{\mf{D}})/2\ell^2}\right),
\end{align*}
recalling that $2\ell = n(n-1)$. Taking $R = (N/\sqrt{p})^{1/2} \asymp p^{n/2-3/4}$, we deduce that
\begin{align*}
F_{\mbf{\eps}}(d) &\ll_{\e,d,t} |\{t \pmod{p^{n-1}} : \exists d : 4a^2(d+\tau) \in (\mb{Z}/p\mb{Z})^{\times 3} \forall \tau \in T \text{ and }\mc{D}(\mbf{\eps},d,t) \in \{0,1\}\}| \\
&+p^{\e} \sum_{|t| \leq N \atop \mf{D} = \mf{D}(\mbf{\eps},d,t) \geq 2} \left((N/\sqrt{p})^{-\mf{D}/4\ell^2}+(p^{n-\mf{D}}(N/\sqrt{p})^{-\mf{D}})^{1/2\ell^2} + p^{-(n-\theta_{\mf{D}})/2\ell^2}\right) + (N/\sqrt{p})^2 \\
&\ll_{\e} |\{t \pmod{p^{n-1}} : \exists d : 4a^2(d+\tau) \in (\mb{Z}/p\mb{Z})^{\times 3} \forall \tau \in T \text{ and }\mc{D}(\mbf{\eps},d,t) \in \{0,1\}\}| \\
&+p^{n-1-\frac{1}{n^2(n-1)^2}+\e}.
\end{align*}
In this way, we obtain
\begin{align*}
\sideset{}{^{\Diamond}} \sum_{d \pmod{p}} F_{\mbf{\eps}}(d) &\ll_{\e,n} \sum_{t \pmod{p^{n-1}}} |\{d \pmod{p} : 4a^2(d+\tau) \in (\mb{Z}/p\mb{Z})^{\times 3} \forall \tau \in T \text{ and } \mc{D}(\mbf{\eps},d,t) \in \{0,1\}\}| \\
&+p^{n-\frac{1}{n^2(n-1)^2} + \e}.
\end{align*}
It remains to treat the contribution from pairs $(t,d)$ for which $\mc{D}(\mbf{\eps},d,t) \in \{0,1\}$. By construction, if $\mc{D}(\mbf{\eps},d,t) = 0$ then $\theta_1(\mbf{\eps},d,t) \geq n > 1$. Similarly if $\mc{D}(\mbf{\eps},d,t) = 1$ \emph{and} $n \geq 3$ then $\theta_2(d,t) \geq n > 2$. In either of these cases, provided $p$ is large enough in terms of $n$ we get
$$
\sum_{\tau \in T} \eps_{\tau} s(\bar{2a}(d+\tau))^{2-3(\mc{D}+1)} \equiv 0 \pmod{p}.
$$
Applying Lemma \ref{lem:RicRoy}, we find that for each $t \pmod{p^{n-1}}$ the number of $d$ satisfying $(\Diamond)$ such that this latter congruence holds is $\ll_{N,M} 1$. Since $n \ll_{N,M} 1$, we deduce that
$$
\sideset{}{^{\Diamond}} \sum_{d \pmod{p}} F_{\mbf{\eps}}(d) \ll_{\e,N,M} p^{n-\frac{1}{n^2(n-1)^2}+\e} + p^{n-1} \ll p^{n-\frac{1}{n^2(n-1)^2}+\e}.
$$
It remains to consider those $d$ for which $\mc{D}(\mbf{\eps},d,t) = 1$ and $n = 2$. In this case, we know that $p^2 \nmid A_1(\mbf{\eps},d,t)$, and moreover that $A_1(\mbf{\eps},d,t) = a_1(\mbf{\eps},d)$, i.e., $A_1(\mbf{\eps},d,t)$ is independent of $t$. The sum we wish to bound is therefore
$$
\sideset{}{^{\Diamond}} \sum_{d \pmod{p} \atop p^2\nmid a_1(\mbf{\eps},d)} e_{p^2}(a_0(\mbf{\eps},d)) \sum_{t \pmod{p}} e_p\left(t(a_1(\mbf{\eps},d)/p)\right) = 0,
$$
so the bound required is satisfied in this case as well.
%
\end{proof}

\section{Applying the $K_2$ Correlations Bounds: Proof of Propositon \ref{prop:ThEst}}
In this section we will prove Proposition \ref{prop:ThEst}. Suppose $Q \geq 2$ factors as $Q = Q_0 \cdots Q_L$, where the $Q_i$ are mutually coprime. Let $p^{\nu}||Q_0$ with $\nu \geq 1$ and $p > 3$ prime. Let $C \in \mb{Z}/p^{\nu}\mb{Z}$ and $\mbf{h} \in \mb{Z}^L$, and define
\begin{multline*}
  M_{p^{\nu}}(C, \mbf{h}) := \max_{A,B \in (\mb{Z}/p^{\nu}\mb{Z})^{\times}}\left|\sum_{b \pmod*{p^{\nu}}} e_{p^{\nu}}(CBb)\prod_{I \subseteq \{1,\ldots,L\} \atop |I| \equiv 0 \pmod{2}} K_2(A,b+H_I;p^{\nu})\right.\\
  \left.  \cdot \prod_{J \subseteq \{1,\ldots,L\} \atop |J| \equiv 1 \pmod{2}} \bar{K}_2(A,b+H_J;p^{\nu})\right|,
\end{multline*}
where for each $I \subseteq \{1,\ldots,L\}$ we set $H_I := \sum_{i \in I} Q_ih_i$. 
Writing $N$ to denote the number of subsets of $\{1,\ldots,L\}$ of even cardinality and $M$ the number of subsets with odd cardinality, the total number of $K_2$ factors in the sum is clearly $N + M = 2^L$.\\
Define now $T_{\mbf{h}} := \{H_I : I \subseteq \{1,\ldots,L\}\}$, as well as 
\begin{align*}
\mu_{\mbf{h}}(\tau) &:= |\{I \subseteq \{1,\ldots,L\} : |I| \equiv 0 \pmod{2}, H_I \equiv \tau \pmod{p^{\nu}}\}| \\
\nu_{\mbf{h}}(\tau) &:= |\{I \subseteq \{1,\ldots,L\} : |I| \equiv 1 \pmod{2}, H_I \equiv \tau \pmod{p^{\nu}}\}|.
\end{align*}
Further, define 
$$
\mc{T}_{p^{\nu}} := \{\mbf{h} \in \mb{Z}^L : \mu_{\mbf{h}}(\tau) \equiv \nu_{\mbf{h}}(\tau) \pmod{3} \text{ for all } \tau \in T_{\mbf{h}}\}.
$$
The following lemma, which is analogous to \cite[Lemma 4.5]{Irv}, allows us to control how frequently either $\mbf{h} \in \mc{T}_{p^{\nu}}$ or $\mu_{\mbf{h}}(\tau) \equiv \nu_{\mbf{h}}(\tau) \pmod{3}$. 
\begin{lem} \label{lem:combo}
Let $p > 3$, $p\mid Q_0$ and let $\mbf{h} \in \mb{Z}^{L}$. 
If $\mu_{\mbf{h}}(\tau) \equiv \nu_{\mbf{h}}(\tau) \pmod{3}$ for all $\tau \in T_{\mbf{h}}$ then there is $1 \leq i \leq L$ such that $p|h_i$. In particular, if $\mbf{h} \in \mc{T}_{p^{\nu}}$ then $p\mid \prod_{1 \leq i\leq L} h_i$. \\
\end{lem}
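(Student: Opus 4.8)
The plan is to argue by contrapositive: assume $p \nmid h_i$ for all $1 \le i \le L$ and show that some $\tau \in T_{\mbf{h}}$ violates $\mu_{\mbf{h}}(\tau) \equiv \nu_{\mbf{h}}(\tau) \pmod 3$. The natural candidate is $\tau = 0$. Observe that $H_I = \sum_{i \in I} Q_i h_i$, and that $I = \emptyset$ contributes $H_{\emptyset} = 0$ to $\mu_{\mbf{h}}(0)$ (the empty set has even cardinality). So the question becomes: which nonempty $I \subseteq \{1,\ldots,L\}$ satisfy $H_I \equiv 0 \pmod{p^{\nu}}$, hence $\pmod p$? Since the $Q_i$ are mutually coprime and each $Q_i$ may or may not be divisible by $p$, and $Q_0$ carries the factor $p^\nu$, in the cleanest case at most one $Q_i$ ($i \geq 1$) is divisible by $p$; more carefully, one should track $v_p(Q_i)$ for each $i$. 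The key arithmetic point is that $\sum_{i \in I} Q_i h_i \equiv 0 \pmod p$ forces a constraint among the $Q_i h_i$ that, given $p \nmid h_i$, is governed entirely by the $p$-adic valuations of the $Q_i$; I would isolate the index (or indices) $i$ with $v_p(Q_i)$ minimal among $i \in I$.

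First I would reduce modulo $p$ and split $\{1,\ldots,L\}$ according to whether $p \mid Q_i$ or not. Let $S := \{i : p \nmid Q_i\}$. Then for $I \subseteq \{1,\ldots,L\}$, $H_I \equiv \sum_{i \in I \cap S} Q_i h_i \pmod p$ (the terms with $p \mid Q_i$ drop out). If $I \cap S = \emptyset$ then $H_I \equiv 0$. This is the crux: the subsets $I$ lying entirely in $\{1,\ldots,L\} \setminus S$ all contribute to $\mu_{\mbf h}(0)$ or $\nu_{\mbf h}(0)$ according to parity, and there are $2^{L - |S|}$ such subsets, of which $2^{L-|S|-1}$ have even cardinality and $2^{L-|S|-1}$ odd (when $L - |S| \geq 1$), contributing equally and hence cancelling mod $3$. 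So the discrepancy $\mu_{\mbf h}(0) - \nu_{\mbf h}(0)$ is determined by subsets $I$ with $I \cap S \neq \emptyset$ for which nevertheless $\sum_{i \in I \cap S} Q_i h_i \equiv 0 \pmod p$. I would then show that, since the $Q_i$ for $i \in S$ are units mod $p$ and pairwise coprime but not necessarily "generic", one still gets a parity imbalance — the cleanest sub-case being $S = \emptyset$ or $|S \cap I|$ controlled, where $\mu_{\mbf h}(0) - \nu_{\mbf h}(0) = \pm(\text{power of }2)$, never divisible by $3$.

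The main obstacle I anticipate is handling the case where several $Q_i$ ($i \in S$) combine to give $\sum_{i \in I} Q_i h_i \equiv 0 \pmod p$ for some proper nonempty $I \subseteq S$: then $\tau = 0$ may be a genuine common value and the naive count fails. The remedy, following Irving's Lemma 4.5, is to observe that if $p \nmid h_i$ for all $i$, one can instead examine the value $\tau = H_{\{i_0\}} = Q_{i_0} h_{i_0}$ for a well-chosen $i_0$ — or more robustly, pick the *minimal* element $\tau^* \in T_{\mbf h}$ (say minimal nonnegative residue, or minimal $p$-adic valuation) and show $\mu_{\mbf h}(\tau^*) - \nu_{\mbf h}(\tau^*)$ is a nonzero power of $2$ up to sign, using that the map $I \mapsto I \triangle \{i_0\}$ (symmetric difference with a fixed index) pairs even and odd subsets and either preserves or shifts $H_I$ predictably. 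Concretely: fix $i_0 \in \{1,\ldots,L\}$; the involution $I \mapsto I \triangle \{i_0\}$ flips parity, and $H_{I \triangle \{i_0\}} = H_I \pm Q_{i_0} h_{i_0}$. If for the value $\tau$ attaining, say, the largest $H_I$ no subset $I'$ with $H_{I'} = \tau$ contains $i_0$, then this involution injects the $I$'s with $H_I = \tau$ and even parity into those with $H_I = \tau - Q_{i_0}h_{i_0} \neq \tau$ — forcing $\mu_{\mbf h}(\tau) = 0$ or $\nu_{\mbf h}(\tau) = 0$, and since $\mu_{\mbf h}(\tau) + \nu_{\mbf h}(\tau) \geq 1$ this gives exactly one of them positive and the other zero, so $3 \nmid \mu_{\mbf h}(\tau) - \nu_{\mbf h}(\tau)$ unless that positive value is a multiple of $3$ — which one rules out by a finer extremality choice. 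The second sentence of the lemma is then immediate: $\mc{T}_{p^\nu} \subseteq \{\mbf h : p \mid \prod_i h_i\}$ by the first part. I will write this out carefully, mirroring the structure of \cite[Lemma 4.5]{Irv}, with the combinatorial involution argument doing the real work.
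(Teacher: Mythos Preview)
Your plan diverges substantially from the paper's proof and, as it stands, has a genuine gap.

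First, a small point: since the $Q_0,\ldots,Q_L$ are pairwise coprime and $p\mid Q_0$, we automatically have $p\nmid Q_i$ for every $1\le i\le L$. So your set $S=\{i:p\nmid Q_i\}$ is always all of $\{1,\ldots,L\}$, and the case analysis on $I\cap S$ is unnecessary.

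The real problem is the involution step. You claim that if no subset $I$ with $H_I=\tau$ contains $i_0$, then $I\mapsto I\triangle\{i_0\}$ forces $\mu_{\mbf h}(\tau)=0$ or $\nu_{\mbf h}(\tau)=0$. But what the involution actually gives is a parity-flipping bijection between $\{I:H_I=\tau\}$ and $\{I':H_{I'}=\tau+Q_{i_0}h_{i_0}\}$, i.e.\ the relations $\mu_{\mbf h}(\tau)=\nu_{\mbf h}(\tau+Q_{i_0}h_{i_0})$ and $\nu_{\mbf h}(\tau)=\mu_{\mbf h}(\tau+Q_{i_0}h_{i_0})$. This relates two different values of $\tau$; it does not force either count at $\tau$ to vanish. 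And even if you could arrange one of $\mu_{\mbf h}(\tau),\nu_{\mbf h}(\tau)$ to vanish for some extremal $\tau$, you still need the other to be $\not\equiv 0\pmod 3$, which you explicitly defer to ``a finer extremality choice'' without saying what that choice is. This is exactly the heart of the lemma, and the involution gives no control on the residue modulo $3$.

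The paper sidesteps all of this with a short algebraic argument. Set
\[
E(\mbf h)=\prod_{i=1}^L\bigl(1-e_p(Q_ih_i)\bigr)=\sum_{I\subseteq\{1,\ldots,L\}}(-1)^{|I|}e_p(H_I)=\sum_{\tau\in T}(\mu_{\mbf h}(\tau)-\nu_{\mbf h}(\tau))e_p(\tau).
\]
If $p\nmid h_i$ for all $i$, then since $p\nmid Q_i$ the product $\prod_{a\in(\mb Z/p\mb Z)^\times}E(a\mbf h)$ equals $\Phi_p(1)^L=p^L$. On the other hand, if $3\mid(\mu_{\mbf h}(\tau)-\nu_{\mbf h}(\tau))$ for every $\tau$, write $\mu_{\mbf h}(\tau)-\nu_{\mbf h}(\tau)=3m_\tau$; then $E(a\mbf h)=3\sum_\tau m_\tau e_p(a\tau)$ for each $a$, and the product over $a$ is $3^{p-1}$ times the norm $N_{\mb Q(e_p(1))/\mb Q}\bigl(\sum_\tau m_\tau e_p(\tau)\bigr)\in\mb Z$. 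Thus $3^{p-1}\mid p^L$, which is impossible for $p>3$. The mod-$3$ hypothesis is used globally through the norm, not at a single $\tau$; this is the key idea your sketch is missing.
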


\begin{proof}
If $\mbf{h} \in \mc{T}_{p^{\nu}}$ then by definition $\mu_{\mbf{h}}(\tau) \equiv \nu_{\mbf{h}}(\tau) \pmod{3}$ for all $\tau \in T$, and the second assertion immediately follows from the first. Thus, it suffices to prove that $p|\prod_{1 \leq i \leq L} h_i$ whenever $3|(\mu_{\mbf{h}}(\tau)-\nu_{\mbf{h}}(\tau))$ for all $\tau \in T$. \\
We denote
$$
E(\mbf{h}) := \prod_{1 \leq i \leq L} \left(1-e_p\left(Q_ih_i\right)\right).
$$ 
Let $\Phi_p(z) := \sum_{0 \leq j \leq p-1} z^j = \prod_{a \in (\mb{Z}/p\mb{Z})^{\times}} (z-e_p(a))$ the cyclotomic polynomial of order $p$. Assume for the sake of contradiction that $p \nmid h_i \forall i$. Then
$$
\prod_{a \in (\mb{Z}/p\mb{Z})^{\times}} E(a\mbf{h}) = \prod_{1 \leq i \leq L} \prod_{a \in (\mb{Z}/p\mb{Z})^{\times}} (1-e_p(ah_iQ_i)) = \prod_{b \in (\mb{Z}/p\mb{Z})^{\times}} (1-e_p(b))^L = \Phi_p(1)^L = p^L,
$$
using the fact $(Q_i,Q_0) = 1$ and thus that $p \nmid a h_iQ_i $ whenever $p \nmid a $, for all $1 \leq i \leq L$. \\
On the other hand, we have
$$
E(a\mbf{h}) = \prod_{1 \leq i \leq L} (1-e_p(ah_iQ_i)) = \sum_{I \subseteq \{1,\ldots,L\}} (-1)^{|I|} e_p(aH_I) = \sum_{\tau \in T} (\mu_{\mbf{h}}(\tau)-\nu_{\mbf{h}}(\tau))e_p(a\tau),
$$
and since $3|(\mu_{\mbf{h}}(\tau)-\nu_{\mbf{h}}(\tau))$ for each $\tau \in T$ we can find $m_{\tau} \in \mb{Z}$ such that
$$
E(a\mbf{h}) = 3\sum_{\tau \in T} m_{\tau} e_p(a\tau).
$$
It follows that
\begin{align*}
p^L &= \prod_{a \in (\mb{Z}/p\mb{Z})^{\times}} E(a\mbf{h}) = 3^{p-1}\prod_{\sg \in \text{Gal}(\mb{Q}(e_p(1))/\mb{Q})} \sg\left(\sum_{\tau \in T} m_{\tau} e_p(\tau)\right) \\
&= 3^{p-1} N_{\mb{Q}(e_p(1))/\mb{Q}}\left(\sum_{\tau \in T} m_{\tau} e_p(\tau)\right).
\end{align*}
Note that the bracketed sum over $\tau \in T$ on the right-hand side is an algebraic integer in $\mb{Q}(e_p(1))$, so its norm is a rational integer. This implies that $3 \mid p^L$, which is false since $p > 3$. This contradiction implies that there must be an $i$ for which $p \mid h_i$, as claimed.
\end{proof}

Using Lemma \ref{lem:combo}, we can finally give the following bounds on the correlation sums $M_{p^{\nu}}(C,\mbf{h})$. 
\begin{prop} \label{prop:bdSumm}
Let $L \geq 2$. Fix parameters $Y = Y(L) = 2^{3L+5}$ and $Z = Z(L) = 2^{3L}2^{2^L}$. Let $p^{\nu} || Q_0$, and let $C \in \mb{Z}/p^{\nu} \mb{Z}$. Assume additionally that:
\begin{equation}\label{eq:pAdAVCond}
\min_{I,J \subseteq \{1,\ldots,L\}} \{|H_I-H_J|_p : H_I \neq H_J\} \geq \begin{cases} p^{-r(\nu)} \text{ if $p > Y$ and $\nu > Z$} \\
p^{-\llf r(\nu)/2\rrf} \text{ if $3 < p \leq Y$, $\nu > Z$,}
\end{cases}
\end{equation}
where $r(\nu) := 2^{1-2L}\llf \nu 2^{-2^L}\rrf - 1$.
Then we have
\begin{align*}
\frac{M_{p^{\nu}}(C,\mbf{h})}{p^{\nu(2^{L-1}+1)}} &\ll_{\e,L} (p^{-1/2} + 1_{C = 0})1_{p^{\nu-1}|C}1_{p|\prod_i h_i} \\
&+ \begin{cases} 
p^{-1/2} &\text{ if $\nu = 1$} \\
p^{-\nu/2^{2^L}}  
&\text{ if $p > Y$ and $\nu > Z$} \\
p^{-\nu/2^{2^L}}  
&\text{ if $3 < p \leq Y$ and $\nu > Z$} \\
p^{-1/\nu^2(\nu-1)^2 + \e} &\text{ if $p > Y$ and $2 \leq \nu \leq Z$}.
\end{cases}
\end{align*}
\end{prop}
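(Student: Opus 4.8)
The plan is to treat $M_{p^{\nu}}(C,\mbf h)$ by the correlation estimates already proved, splitting according to the size of $\nu$. Fix $A,B\in(\mb{Z}/p^{\nu}\mb{Z})^{\times}$ and set $c:=CB$; then the quantity inside the maximum defining $M_{p^{\nu}}(C,\mbf h)$ is, when $\nu=1$, a sum of exactly the form bounded in Theorem \ref{thm:boundCorrCompl} (with additive character $b\mapsto e_p(cb)$), and when $\nu\ge 2$ it is precisely $S_{p^{\nu}}(\mbf H,\mbf H';c,A)$ in the notation of Section \ref{sec:corPrimePow}, where $\mbf H=(H_I)_{|I|\text{ even}}$ and $\mbf H'=(H_J)_{|J|\text{ odd}}$. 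In either case $T_{\mbf H,\mbf H'}=T_{\mbf h}$, the frequencies coincide with $\mu_{\mbf h},\nu_{\mbf h}$, and $N+M=2^L$. Since $B$ is a unit, $c=0\iff C=0$ and $p^{\nu-1}\|c\iff p^{\nu-1}\|C$; as the cited bounds are uniform in $(a,c)$, the maximum over $A,B$ is harmless, and everything is measured against the trivial bound $|M_{p^{\nu}}(C,\mbf h)|\ll_L p^{\nu}(p^{\nu/2})^{2^L}=p^{\nu(2^{L-1}+1)}$ coming from $|K_2|\ll p^{\nu/2}$ (Lemma \ref{lem:WeilK2} and its proof).

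For $\nu=1$ I would invoke Theorem \ref{thm:boundCorrCompl}: it saves a factor $p^{-1/2}$ over the trivial bound unless the character $b\mapsto e_p(cb)$ is trivial (i.e.\ $C=0$) \emph{and} $\mu_{\mbf h}(\tau)\equiv\nu_{\mbf h}(\tau)\pmod 3$ for all $\tau\in T_{\mbf h}$, i.e.\ $\mbf h\in\mc T_{p}$; in that exceptional case one keeps only the trivial bound, but Lemma \ref{lem:combo} forces $p\mid\prod_i h_i$, which is exactly the $1_{C=0}1_{p\mid\prod_i h_i}$ contribution. For $\nu\ge 2$ I would apply Theorem \ref{thm:expsumPrimPow} with $n=\nu$ and $N+M=2^L$. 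When $2\le\nu\le Z=2^{3L}2^{2^L}=(N+M)^{3}2^{N+M}$ no separation hypothesis is needed and one obtains the factor $p^{-1/(\nu^2(\nu-1)^2)+\e}$ (a genuine saving precisely when $p>Y$); when $\nu>Z$ one obtains $p^{-\nu 2^{-2^L}+1}$ provided the $p$-adic separation hypothesis of that theorem holds. In each degenerate case (i)--(iii) of Theorem \ref{thm:expsumPrimPow} one has $c=0$ or $p^{\nu-1}\|c$ \emph{and} $3\mid(\mu_{\mbf h}(\tau)-\nu_{\mbf h}(\tau))$ for every $\tau$, so $\mbf h\in\mc T_{p^{\nu}}$ and Lemma \ref{lem:combo} again gives $p\mid\prod_i h_i$: cases (i)--(ii) ($c=0$) feed the $1_{C=0}1_{p^{\nu-1}\mid C}1_{p\mid\prod_i h_i}$ term, and case (iii) feeds the $p^{-1/2}1_{p^{\nu-1}\mid C}1_{p\mid\prod_i h_i}$ term. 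Dividing by $p^{\nu(2^{L-1}+1)}$, and in the range $\nu>Z$ using $\nu>Z$ to absorb the additive constant in the exponent, then yields the four entries of the displayed bound.

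The main (and essentially the only non-mechanical) step will be checking that hypothesis \eqref{eq:pAdAVCond} is strong enough to imply the separation hypothesis $\min_{\tau\ne\tau'\in T_{\mbf h}}|\tau-\tau'|_p\ge p^{-2|T_{\mbf h}|^{-2}(\llf 2^{-|T_{\mbf h}|}\nu\rrf-\rho)}$ of Theorem \ref{thm:expsumPrimPow} \emph{uniformly in $p$}, where $\rho=1_{p\le 3|T_{\mbf h}|/2-1}+\lceil\log(20(N+M)^3)/\log p\rceil$. For $p>Y=2^{3L+5}$ one has $20(N+M)^3=20\cdot 2^{3L}<Y<p$ and $3(N+M)/2-1<Y<p$, so $\rho=1$; since $|T_{\mbf h}|\le 2^L$ and $\nu>Z$, a short computation gives $2|T_{\mbf h}|^{-2}(\llf 2^{-|T_{\mbf h}|}\nu\rrf-1)\ge r(\nu)$, so the first alternative of \eqref{eq:pAdAVCond} suffices. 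For $3<p\le Y$ one only knows $\rho=O_L(1)$, but the value $Z=Z(L)$ has been chosen large enough (given $L\ge 2$) that $2|T_{\mbf h}|^{-2}(\llf 2^{-|T_{\mbf h}|}\nu\rrf-\rho)\ge\llf r(\nu)/2\rrf$ for $\nu>Z$, so the second alternative suffices there. Once this matching of constants is in hand, all that remains is the bookkeeping described above; no new analytic input beyond Theorems \ref{thm:boundCorrCompl} and \ref{thm:expsumPrimPow}, Proposition \ref{prop:epsZero}, and Lemma \ref{lem:combo} is required.
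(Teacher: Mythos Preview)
Your proposal is correct and follows essentially the same approach as the paper: case on $(\nu,p)$, apply Theorem~\ref{thm:boundCorrCompl} for $\nu=1$ and Theorem~\ref{thm:expsumPrimPow} for $\nu\ge 2$, feed the degenerate cases through Lemma~\ref{lem:combo}, and verify that the thresholds $Y,Z$ and the separation hypothesis \eqref{eq:pAdAVCond} are calibrated so that $\rho=1$ when $p>Y$ and $2|T|^{-2}(\lfloor 2^{-|T|}\nu\rfloor-\rho)\ge r(\nu)$ (resp.\ $\ge\lfloor r(\nu)/2\rfloor$) in the two large-$\nu$ regimes. The paper organizes the same computation via the factorization $Q_0=\mf{q}_1\cdots\mf{q}_5$, but since the proposition concerns a single prime power this is just bookkeeping; your direct case split is equivalent.
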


\begin{proof}
Write $Q_0 = \mf{q}_1\mf{q}_2\mf{q}_3\mf{q}_4\mf{q}_5$, where the $\mf{q}_j$ satisfy $(\mf{q}_i,\mf{q}_j) = 1$ for $i \neq j$, according to the following rules: 
\begin{enumerate}[(i)]
\item $p||Q_0 \Rightarrow p|\mf{q}_1$ 
\item $p^{\nu} || Q_0, p > Y$ and $\nu > Z \Rightarrow p^{\nu}|\mf{q}_2$ 
\item $p^{\nu} || Q_0, p \leq Y$ and $\nu > Z \Rightarrow p^{\nu} | \mf{q}_3$
\item $p^{\nu} || Q_0, p > Y$ and $\nu \leq Z \Rightarrow p^{\nu}|\mf{q}_4$
\item $\mf{q}_5 = Q_0/(\mf{q}_1\mf{q}_2\mf{q}_3\mf{q}_4)$.
\end{enumerate}
We summarize (in an effective form) the bounds that the previous sections imply for each prime power divisor of $Q_0$, according to which of the $\mf{q}_i$ they divide.\\
(I) If $p| \mf{q}_1$, Theorem \ref{thm:boundCorrCompl} yields
\begin{align}\label{eq:MpCh}
M_p(C,\mbf{h}) \ll 3^{2^L}p^{2^{L-1}} \left(2^L \sqrt{p} + p1_{p\mid C} 1_{\mc{T}_p}(\mbf{h})\right).
\end{align}
By Lemma \ref{lem:combo}, if $\mbf{h} \in \mc{T}_p$ then $p|\prod_i h_i$, so we may conclude that
\begin{align} \label{eq:mfq1}
M_p(C,\mbf{h}) \ll_L p^{2^{L-1}+1}\left(p^{-1/2} + 1_{p|C}1_{p|\prod_i h_i}\right),
\end{align}
which implies the bound in the first case. \\
(II) Suppose $p^{\nu}||\mf{q}_2$ and $p^{\nu-1} \nmid C$ then as $|T| \leq 2^L$. With a view towards applying Theorem \ref{thm:expsumPrimPow}, recall the definition
$$
\rho = 1_{p \leq 3\cdot 2^{L-1}-1} + \left \lceil \frac{\log(20 \cdot 2^{3L})}{\log p}\right\rceil,
$$
Since $p > 2^{3L+5}$, we have $\rho = 1$, and so
$$
2|T|^{-2}(\llf \nu 2^{-|T|}\rrf - \rho) = 2^{1-L}(\llf 2^{-2^L}\nu\rrf - 1) = r(\nu).
$$
In light of \eqref{eq:pAdAVCond}, Theorem \ref{thm:expsumPrimPow} yields
\begin{align*} 
M_{p^{\nu}}(C,\mbf{h}) \ll_L p^{\nu(2^{L-1}+1)-\nu/2^{2^L}}. 
\end{align*}
If, in addition $p^{\nu-1}|C$, then Lemma \ref{lem:combo} implies that
\begin{align}\label{eq:mfq2}
M_{p^{\nu}}(C,\mbf{h}) \ll_L \max_{a \pmod{p^r}} p^{\nu(2^{L-1}+1)}\left(p^{-\nu/2^{2L}} 
+1_{p|\prod_i h_i}\left(p^{-1/2} + 1_{C = 0}\right)\right),
\end{align}
and the bound in the second case is proved. \\
(III) Suppose $p^{\nu} || \mf{q}_3$ then $3 < p \leq 2^{3L+5}$. We thus have the uniform bound
$$
\rho \leq 1 + \left\lceil \frac{\log(2^{3L+5})}{\log 5}\right \rceil \leq 1 + 2(L+2).
$$
As in case (II), we have
\begin{align}\label{eq:mfq3}
M_{p^{\nu}}(C,\mbf{h}) \ll_L 
p^{\nu(2^{L-1}+1)}\left(p^{-\nu/2^{2L}} 
+ 1_{p|\prod_i h_i}\left(p^{-1/2} + 1_{C = 0}\right)\right)
\end{align}
where we used the fact that since $Z$ is chosen suitably, 
$$
2^{1-2L}\left(\llf \nu 2^{-2^L}\rrf-2L-5\right) \geq \nu 2^{-2L-2^L} \geq r(\nu)/2
$$ 
whenever $L \geq 2$. \\
(IV) If $p^{\nu}||\mf{q}_4$ then, again by Theorem \ref{thm:expsumPrimPow},
\begin{align}\label{eq:mfq4}
M_{p^{\nu}}(C,h) \ll_{\e,L} p^{\nu(2^{L-1}+1)}\left(p^{-\frac{1}{\nu^2(\nu-1)^2}+\e} + 1_{p^{n-1}|C}\left(p^{-1/2} + 1_{C = 0}\right) 1_{p|\prod_i h_i}\right),
\end{align}
as required. \\
(V) For all of the primes powers $p^{\nu}||\mf{q}_5$ the bound given is trivial.
\end{proof}

The following simple lemma allows us to bound the number of tuples $\mbf{h}$ where \eqref{eq:pAdAVCond} fails.
\begin{lem}\label{lem:ctrlHI}
Let $c \in (0,2^{-2^L}]$, and let $d|Q_0$. Assume that $K/Q_j \geq Q_0^{2c}$ for all $1 \leq j \leq L$.  Then the number of tuples $\mbf{h} \in\mb{Z}^L$ with $1 \leq |h_j| \leq K/Q_j$ for all $j$, such that for each $p^{\nu}||d$
$$
\min_{\substack{I,J \subseteq \{1,\ldots,L\} \\ H_I \neq H_J}} |H_I-H_J|_p < p^{- c\nu}
$$
is $\ll 2^{L} \tau(d)^{2L-1} d^{-c}K^L/(Q_1\cdots Q_L)$.
\end{lem}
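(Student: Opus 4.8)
The plan is to bound the number of bad tuples by a union bound over the prime power divisors $p^\nu \| d$ and over pairs of subsets $I, J \subseteq \{1,\dots,L\}$. Fix such a prime power and such a pair with $I \neq J$ (as elements we only care about $H_I \neq H_J$, but it suffices to sum over all unordered pairs of distinct subsets, of which there are $\ll 4^L$). The condition $|H_I - H_J|_p < p^{-c\nu}$ says exactly that $p^{\lceil c\nu\rceil} \mid H_I - H_J$, i.e. $\sum_{i \in I \triangle J} \pm Q_i h_i \equiv 0 \pmod{p^{\lceil c\nu\rceil}}$ after collecting the $Q_i h_i$ with signs according to whether $i$ lies in $I\setminus J$ or $J\setminus I$. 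Since $I \neq J$, the symmetric difference $I \triangle J$ is nonempty; pick a distinguished index $i_0 \in I\triangle J$. Because $(Q_{i_0}, Q_0) = 1$ and $p \mid Q_0$, the coefficient $Q_{i_0}$ is a unit mod $p^{\lceil c\nu\rceil}$, so $h_{i_0}$ is determined modulo $p^{\lceil c\nu\rceil}/(p^{\lceil c\nu\rceil}, \text{stuff})$ — more precisely, once the other $h_i$ ($i \neq i_0$) are chosen arbitrarily, $h_{i_0}$ is constrained to lie in a single residue class modulo $p^{\lceil c\nu\rceil - v}$ where $v = v_p$ of the gcd of the remaining coefficients; in the worst case $v = 0$ and $h_{i_0}$ is pinned to one class mod $p^{\lceil c\nu\rceil}$.

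Then I would count: the number of $h_{i_0}$ with $1 \leq |h_{i_0}| \leq K/Q_{i_0}$ lying in a prescribed residue class mod $p^{\lceil c\nu\rceil}$ is $\ll 1 + (K/Q_{i_0})p^{-\lceil c\nu\rceil} \ll (K/Q_{i_0}) p^{-c\nu}$, where the last step uses the hypothesis $K/Q_{i_0} \geq Q_0^{2c} \geq p^{2c\nu} \geq p^{c\nu+1} \geq p^{\lceil c\nu \rceil}$ (valid since $c\nu \geq 1$ can be assumed — if $c\nu < 1$ the condition $p^{\lceil c\nu\rceil}\mid H_I - H_J$ with $\lceil c\nu\rceil \le 1$ still gives a factor $p^{-1} \le p^{-c\nu}$ is not quite automatic, so one treats $c\nu<1$ by noting $\lceil c\nu\rceil = 1$ and $K/Q_{i_0}\ge Q_0^{2c} \ge p^{2c\nu}$, and $p^{-1} \le p^{-c\nu} \cdot p^{c\nu - 1} \le p^{-c\nu}$ fails; instead simply absorb this case, which only loses a bounded factor, into the $\ll$). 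The other $L-1$ coordinates range freely over intervals of length $\ll K/Q_i$ each, contributing $\prod_{i \neq i_0} (K/Q_i)$. Multiplying, the count of bad tuples for this fixed $(p^\nu, I, J)$ is $\ll p^{-c\nu} K^L/(Q_1\cdots Q_L)$.

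Finally I would sum over the $\ll 4^L$ choices of unordered pair $\{I,J\}$ and over the prime powers $p^\nu \| d$: since $\sum_{p^\nu \| d} p^{-c\nu} \leq \prod_{p^\nu\|d}(1 + p^{-c\nu}) \cdot (\text{something}) $ — more carefully, $\sum_{p^\nu \| d} p^{-c\nu} \le \tau(d) \max_{p^\nu\|d} p^{-c\nu} \le \tau(d) d^{-c}$ is too lossy; better, $\sum_{p^\nu\|d} p^{-c\nu} \le \omega(d)\, d^{-c}$ is also not right in general. The cleanest route: for each prime power $p^\nu\|d$ the bad set has density $\ll p^{-c\nu}$ in the full box, and the events for distinct primes are governed by congruences to coprime moduli, so I would instead bound the union-bound sum crudely by $\sum_{p^\nu\|d} 4^L p^{-c\nu} K^L/(Q_1\cdots Q_L)$ and use $p^{-c\nu} = (p^\nu)^{-c} \le$ (number of divisors weighting) — ultimately landing on the stated $\ll 2^L \tau(d)^{2L-1} d^{-c} K^L/(Q_1\cdots Q_L)$ by absorbing $4^L$ and the sum over prime powers into powers of $\tau(d)$ and a single factor $d^{-c}$ extracted from the largest prime power, the remaining prime powers each contributing a bounded factor into $\tau(d)^{2L-1}$. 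The main obstacle is precisely this last bookkeeping step — getting from the per-prime-power bound to a clean bound in terms of $\tau(d)$ and $d^{-c}$ without losing more than the stated powers of $\tau(d)$ — which requires care because $d^{-c}$ should come out with exponent exactly $c$, forcing one to extract the full $d^{-c}$ only once (say from splitting $d^{-c} = \prod_{p^\nu\|d} p^{-c\nu}$ and distributing one factor $p^{-c\nu}$ to each prime's union bound while the combinatorial and divisor-counting losses are bounded by $\tau(d)^{2L-1} 2^L$).
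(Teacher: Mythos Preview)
Your approach has a structural error, not merely a bookkeeping difficulty. The hypothesis on $\mbf{h}$ is that \emph{for each} prime power $p^{\nu}\|d$ there exist subsets $I_p,J_p$ with $|H_{I_p}-H_{J_p}|_p<p^{-c\nu}$; the bad set is therefore an \emph{intersection} over the primes dividing $d$, not a union. A union bound over primes, as you propose, can at best yield a saving of $\max_{p^{\nu}\|d} p^{-c\nu}$ from a single prime (since the intersection is contained in any one of the sets), and there is no way to assemble the per-prime factors $p^{-c\nu}$ additively into the required $d^{-c}=\prod_{p^{\nu}\|d} p^{-c\nu}$. This is exactly why your final paragraph cannot be made to close: the obstacle is not bookkeeping but the wrong decomposition.

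The correct step is to interchange the intersection over primes with the union over pairs of subsets. Writing the bad set as $\bigcap_{p^{\nu}\|d}\bigcup_{(I,J)} B_{p,I,J}$, one rewrites it as $\bigcup_{(\mbf{I},\mbf{J})}\bigcap_{p^{\nu}\|d} B_{p,I_p,J_p}$, where the outer union is over all tuples of pair-choices $(I_p,J_p)_{p\mid d}$. For a fixed such tuple, the conditions $p^{\lceil c\nu\rceil}\mid \sum_{i\in I_p'} Q_ih_i-\sum_{i\in J_p'} Q_ih_i$ (with $I_p',J_p'$ the disjoint parts of $I_p,J_p$) are congruences to pairwise coprime moduli, so by CRT they define a sublattice of $\mb{Z}^L$ of covolume $\tilde d=\prod_{p^{\nu}\|d} p^{\lceil c\nu\rceil}\geq d^{c}$; this gives $\ll d^{-c}(2K)^L/(Q_1\cdots Q_L)$ lattice points in the box, using $K/Q_j\geq Q_0^{2c}\geq\tilde d$ to absorb the ``$+1$''. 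Summing over the at most $\binom{2^L}{2}^{\omega(d)}\leq \tau(d)^{2L-1}$ choices of tuple yields the stated bound. Your per-prime analysis (reducing to a single congruence via a distinguished index $i_0$) is essentially the one-prime case of this lattice argument; what is missing is the CRT step that multiplies the savings across primes rather than summing them.
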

\begin{proof}
Write $d = \prod_{1 \leq i \leq m} p_i^{\nu_i}$, so that $m = \omega(d)$. For each $1 \leq i \leq m$, choose a pair of disjoint subsets $I_i,J_i \subseteq \{1,\ldots,L\}$, such that $I_i\cup J_i \neq \emptyset$. We may define a matrix $A_{\mbf{I},\mbf{J}} = \{a_{i,j}\}_{i,j}$ with integer entries via
$$
a_{i,j} := \begin{cases} Q_j &\text{ if $j \in I_i$,} \\ -Q_j &\text{ if $j \in J_i$,} \\ 0 &\text{ otherwise.}\end{cases}
$$
By composing $A_{\mbf{I},\mbf{J}}$ with projections, we may view it as a homomorphism $A_{\mbf{I},\mbf{J}}: \mb{Z}^L \ra \prod_{1 \leq i \leq m} (\mb{Z}/p_i^{\lceil c\nu_i \rceil}\mb{Z})$, such that for each $1\leq i \leq m$ the $i$th entry of $A_{\mbf{I},\mbf{J}}\mbf{h}$ is
$$
(A_{\mbf{I},\mbf{J}}\mbf{h})_i := \sum_{1 \leq j \leq L} a_{i,j} h_j \pmod{p_i^{\lceil c\nu_i \rceil}} = \sum_{l \in I_i} Q_lh_l - \sum_{l \in J_i} Q_lh_l \pmod{p_i^{\lceil c\nu_i \rceil}},
$$
whenever $\mbf{h} \in \mb{Z}^L$. Note that $\text{ker}(A_{\mbf{I},\mbf{J}})$ is a lattice in $\mb{Z}^L$ with covolume $\tilde{d} := \prod_{1 \leq i \leq m} p_i^{\lceil c\nu_i\rceil} \geq d^c$, and trivially $A_{\mbf{I},\mbf{J}}$ is injective on the quotient $\mb{Z}^L/\text{ker}(A_{\mbf{I},\mbf{J}})$, with the unique reduced zero class  being $\mbf{0}$. By lattice periodicity, (and $\tilde{d}\leq Q_0^{2c} \leq K/Q_j$ for each $j$) we deduce that
\begin{equation}\label{eq:lattBd}
|\{\mbf{h} \in \text{ker}(A_{\mbf{I},\mbf{J}}) : 1 \leq |h_j| \leq K/Q_j \forall 1 \leq j \leq L\}| \ll \tilde{d}^{-1}(2K)^L/(Q_1\cdots Q_L) \ll d^{-c} (2K)^L/(Q_1\cdots Q_L).
\end{equation}
Given these remarks, we may prove the lemma as follows. Let $\mc{H}_d$ be the set of tuples $\mbf{h}$ described in the statement of the lemma. Given $\mbf{h} \in \mc{H}_d$, for each prime $p^{\nu}||d$ there are distinct subsets $I_p = I_p(\mbf{h})$, $J_p = J_p(\mbf{h})$ of $\{1,\ldots,L\}$ such that $|H_{I_p}-H_{J_p}|_p \leq p^{-\lceil c\nu\rceil}$. Now, put $I_p' := I_p \bk (I_p \cap J_p)$ and $J_p' := J_p \bk (I_p \cap J_p)$, so that $I_p' \cup J_p' \neq \emptyset$, $I_p'$ and $J_p'$ are disjoint, and moreover $H_{I_p}-H_{J_p} = H_{I_p'}-H_{J_p'}$. We thus have, for each $p^{\nu}||d$,
$$
\sum_{l \in I_p'} Q_l h_l - \sum_{l \in J_p'} Q_lh_l \equiv 0 \pmod{p^{\lceil c\nu \rceil}}.
$$
By the previous definitions, it follows that $\mbf{h} \in \text{ker}(A_{\{I_p'\}_p,\{J_p'\}_p})$. Hence, we obtain the upper bound
$$
|\mc{H}_d| \leq \sum_{\substack{\mbf{I},\mbf{J} \subseteq \{1,\ldots,L\}^m \\ I_i \cap J_i = \emptyset \forall i \\ I_i \cup J_i \neq \emptyset \forall i}} |\{\mbf{h} \in \text{ker}(A_{\mbf{I},\mbf{J}}) : 1 \leq |h_j| \leq K/Q_j \forall 1 \leq j \leq L\}|.
$$
The number of pairs of tuples of sets $\mbf{I},\mbf{J}$ is $\leq \binom{2^L}{2}^m \leq 2^{(2L-1)\omega(d)} \leq \tau(d)^{2L-1}$, so by \eqref{eq:lattBd} we obtain
$$
|\mc{H}_d| \ll 2^L\tau(d)^{2L-1}d^{-c}K^L/(Q_1\cdots Q_L),
$$
as claimed.
\end{proof}


\begin{proof}[Proof of Proposition \ref{prop:ThEst}]
i) Assume $Q_0,\ldots,Q_L$ are all coprime and squarefree. Case (I) of Proposition \ref{prop:bdSumm} gives 
$$
M_p(C,\mbf{h}) \ll_L p^{2^{L-1}+1/2}\min\{(p,C)^{1/2},(p,\prod_i h_i)^{1/2}\},
$$ 
for each $p\mid Q_0$. Combining this with Lemma \ref{lem:toPrime}, we obtain that
\begin{align*}
  T(\mbf{h}) &= \sum_{k \in J(\mbf{h})}\prod_{I \subseteq \{1,\ldots,L\}} \mc{C}^{|I|} K_2\left(b,k+\sum_{i \in I} Q_ih_i; Q_0\right)\ll \sum_{C \in \mb{Z}/(Q_0\mb{Z})} \min\left\{\frac{|J(\mbf{h})|}{Q_0}, \frac{1}{Q_0\|C/Q_0\|}\right\} \prod_{p\mid Q_0} M_p(C,\mbf{h}) \\
             &\ll_{\e,L} Q_0^{2^{L-1}+1/2+\e} \sum_{C \in \mb{Z}/(Q_0\mb{Z})} \min\left\{\frac{|J(\mbf{h})|}{Q_0}, \frac{1}{Q_0\|C/Q_0\|}\right\} \prod_{p\mid q_0} \min\{(p,C)^{1/2},(p,\prod_i h_i)^{1/2}\} \\
             &\ll_{\e} Q_0^{2^{L-1}+1/2+\e}\sum_{d\mid Q_0 \atop d < Q_0} \frac{(d,\prod_i h_i)^{1/2}}{d} \asum_{C' \pmod*{Q_0/d}} \frac{1}{C'}  + \frac{K}{Q_0} (Q_0,\prod_i h_i)^{1/2} \\
             &\ll_{\e} (1+K/Q_0) Q_0^{2^{L-1}+1/2+\e}(Q_0,\prod_i h_i)^{1/2},
\end{align*}
using $|J(\mbf{h})| \leq K$ and the obvious inequality $(d,\prod_i h_i) \leq (Q_0, \prod_i h_i)$ for any $d|Q_0$.
%
%
%
Employing $(Q_0,\prod_i h_i)^{1/2} \leq \prod_i (Q_0,h_i)^{1/2}$ and summing in $h_1,\ldots,h_L$ yields
\begin{align*}
&\sum_{1 \leq |h_1| \leq K/Q_1} \cdots \sum_{1 \leq |h_L| \leq K/Q_L} |T(\mbf{h})| \\
&\ll_{\e,L} (1+K/Q_0) Q_0^{2^{L-1}+1/2+\e} \prod_{1 \leq j \leq L} \quad \sum_{1 \leq |h_j| \leq K/Q_j} (Q_0,h_j)^{1/2} \\
&\ll (1+K/Q_0) Q_0^{2^{L-1}+1/2+\e}\sum_{e_1,\ldots,e_L\mid Q_0} (e_1\cdots e_L)^{1/2} \quad \prod_{1 \leq j \leq L} \quad\sum_{1 \leq |h_j'| \leq K/(e_jQ_j) \atop (h_j',Q_0) = 1} 1 \\
&\ll_L (1+K/Q_0) Q_0^{2^{L-1}+1/2+\e} \frac{K^L}{Q_1\cdots Q_L} \left(\sum_{e\mid Q_0} e^{-1/2}\right)^L \\
&\ll_{\e,L} (1+K/Q_0) Q_0^{2^{L-1}+1/2+\e}\frac{K^L}{Q_1\cdots Q_L} = (1+K/Q_0) Q_0^{2^{L-1}+3/2+\e}K^L/Q,
\end{align*}
using $Q/Q_0 = Q_1\cdots Q_L$ in the last line. This implies i).\\
ii) As above, we would like to estimate
$$
\mc{T} := \sum_{1 \leq |h_1| \leq K/Q_1} \cdots \sum_{1 \leq |h_L| \leq K/Q_L} \sum_{C \pmod{Q_0}} \min\left\{\frac{K}{Q_0},\frac{1}{Q_0\|C/Q_0\|}\right\} \prod_{p^{\nu}||Q_0} M_{p^{\nu}}(C,\mbf{h}).
$$
Here, we are assuming that $(Q_0,6) = 1$. We factor $Q_0 = \mf{q}_1\mf{q}_2\mf{q}_3\mf{q}_4\mf{q}_5$, where 
$$
\mf{q}_1 = \prod_{p||Q_0} p, \ \ \mf{q}_2 = \prod_{p^{\nu}||Q_0 \atop p>Y,\nu > Z} p^{\nu}, \ \ \mf{q}_3 = \prod_{p^{\nu}||Q_0 \atop 3 < p \leq Y, \nu > Z} p^{\nu}, \mf{q}_4 = \prod_{p^{\nu}||Q_0 \atop p > Y, 2 \leq \nu \leq Z} p^{\nu} \text{ and } \mf{q}_5 = \prod_{p^{\nu}||Q_0 \atop p \leq Y, 2 \leq \nu \leq Z} p^{\nu}.
$$
By construction, $\mf{q}_5 \ll_L 1$, so that
$$
\prod_{p^{\nu}||\mf{q}_5} M_{p^{\nu}}(C,\mbf{h}) \ll \mf{q}_5^{(N+M+2)/2} \ll_L 1.
$$
We focus next on the prime power divisors of $Q_0' := Q_0/\mf{q}_5$. As above, define $r(\nu) := \llf 2^{1-2L}\llf \nu 2^{-2^L-2L}\rrf \rrf$. 
For each $1 \leq i \leq 4$ and each $p^{\nu}||\mf{q}_i$, we write
$$
M_{p^{\nu}}(C,\mbf{h}) = p^{(2^{L-1}+1)\nu}\left(M_{p^{\nu},1}(C,\mbf{h}) + M_{p^{\nu},2} + M_{p^{\nu},3}(\mbf{h})\right),
$$
where we have defined 
\begin{align*}
M_{p^{\nu},1}(C,\mbf{h}) &= \begin{cases} 1_{p^{\nu-1}|C} 1_{p|\prod_j h_j}\left(p^{-1/2} + 1_{p^{\nu}|C}\right) &\text{ if $i = 2,3,4$} \\ 1_{p|C}1_{p|\prod_j h_j} &\text{ if $i = 1$,} \end{cases}
\end{align*}
as well as
\begin{align*}
M_{p^{\nu},2} =  \begin{cases} p^{-1/2} &\text{ if $i = 1$} \\ p^{-\nu/2^{2^L}} &\text{ if $i = 2,3$} \\ p^{-\nu/Z^5+\e} &\text{ if $i = 4$} \end{cases} , \ \ \ \ \ 
M_{p^{\nu},3}(\mbf{h}) = \begin{cases} 0 &\text{ if $i = 1,4$} \\ 1 &\text{ if $i = 2,3$ and \eqref{eq:pAdAVCond} fails.}
\end{cases} 
\end{align*}
Note that $M_{p^{\nu},1}$ depends only on $(C,Q_0') = \prod_{1 \leq i \leq 4} (C,\mf{q}_i)$. We factor $C= C'd$ and $d = d_1d_2d_3d_4d_5$, where $d_j = (C,\mf{q}_j)$ for each $1 \leq j \leq 5$, and get
\begin{align}
\mc{T} &\ll_L \sum_{d|Q_0 \atop d=d_1d_2d_3d_4d_5} \left(\asum_{C' \pmod{Q_0/d}} \min\left\{\frac{K}{Q_0},\frac{1}{Q_0\|C'd/Q_0\|}\right\}\right) \nonumber\\
&\cdot \sum_{1 \leq |h_1| \leq K/Q_1} \cdots \sum_{1 \leq |h_L| \leq K/Q_L} \prod_{1 \leq i \leq 4} \prod_{p^{\nu}||\mbf{q}_i} p^{(2^{L-1}+1)\nu}\left(M_{p^{\nu},1}(d_i,\mbf{h}) + M_{p^{\nu},2} + M_{p^{\nu},3}(\mbf{h})\right) \nonumber\\
&\ll_{\e,L} Q_0^{2^{L-1}+1}\ssum_{e_1f_1 = \mf{q}_1} f_1^{-1/2} \ssum_{e_4f_4 = \mf{q}_4} f_4^{-Z^{-5}+\e}\ssum_{e_2f_2g_2 = \mf{q}_2} f_2^{-2^{-2^L}} \ssum_{e_3f_3g_3 = \mf{q}_3} f_3^{-2^{-2^L}} \nonumber\\ 
&\cdot \sum_{\substack{d|Q_0 \\ d = d_1d_2d_3d_4d_5 \\ p^{\nu}||e_j \Rightarrow p^{\nu-1}|d_j \\ 1 \leq j \leq 4}} \left(\asum_{C' \pmod{Q_0/d}} \min\left\{K/Q_0,\frac{1}{Q_0\|C'd/Q_0\|}\right\}\right) \left(\prod_{1 \leq j \leq 4} \prod_{p|e_j} \left(p^{-1/2}1_{j \neq 1} + 1_{\nu_p(e_j) = \nu_p(d_j)}\right)\right) \nonumber\\
&\cdot 
\mathop{\sum_{1 \leq |h_1| \leq K/Q_1} \cdots \sum_{1 \leq |h_L| \leq K/Q_L}}_{\substack{p|g_2g_3 \Rightarrow \exists j : M_{p^{\nu},3} \neq 0 \\ \text{rad}(e_1e_2e_3e_4)|\prod_j h_j}} 1, \label{eq:hsumWithConds}
\end{align}
where the summation symbol $\ssum_{uvw = x}$ indicates that $(u,v) = (v,w) = (w,u) = 1$ (and analogously $\ssum_{uv = x}$ indicates that $(u,v) = 1$), so that if $p|u$, say, then $\nu_p(u) = \nu_p(x)$. \\  
By dropping the constraint on $e_1,e_2,e_3,e_4$, the innermost sum over the tuples $\mbf{h}$ can be bounded above by
$$
|\{\mbf{h} \in \mb{Z}^L : 1 \leq |h_j| \leq K/Q_j \forall j, p|g_2g_3 \Rightarrow \min_{\substack{I,J \subseteq \{1,\ldots,L\} \\ H_I \neq H_J}} |H_I-H_J| < p^{-r(\nu)/2} \}|.
$$
If we define 
$$
\delta' := \min\left\{2^{-2^L},Z^{-5},\min_{\nu > Z} \frac{\llf r(\nu)/2 \rrf}{\nu}\right\}.
$$
then by applying Lemma \ref{lem:ctrlHI} with $c = \delta'$ and $d = g_2g_3$ (which is a divisor of $Q_0$), and using the crude bound $\tau(d)^{2L-1} \ll_{\e,L} d^{\e}$ we may bound this cardinality by
$$
\ll_{\e,L} X^{\e} \frac{K^L}{(g_2g_3)^{\delta'} Q_1\cdots Q_L} = X^{\e} \frac{K^LQ_0}{(g_2g_3)^{\delta'} Q}.
$$
Inserting this into our earlier upper bound for $\mc{T}$, we obtain
%
\begin{align*}
\mc{T} &\ll_{\e,L} \frac{X^{\e}K^LQ_0^{2^{L-1}+2}}{Q} \ssum_{e_1f_1 = \mf{q}_1 \atop e_4f_4 = \mf{q}_4} \frac{1}{f_1^{1/2}f_4^{Z^{-5}}} \ssum_{e_2f_2g_2 = \mf{q}_2 \atop e_3f_3g_3 = \mf{q}_3} \frac{1}{(f_2f_3)^{2^{-2^L}}(g_2g_3)^{\delta'}}\\ 
&\cdot \sum_{\substack{d|Q_0 \\ d= e_1d_2d_3d_4d_5 \\ \nu_p(d_j) \geq \nu_p(e_j) - 1 \\ 2 \leq j \leq 4}} \left(\asum_{C' \pmod{Q_0/d}} \min\left\{K/Q_0,\frac{1}{Q_0\|C'd/Q_0\|}\right\}\right) \left(\prod_{2 \leq j \leq 4} \prod_{p|e_j} \left(p^{-1/2} + 1_{\nu_p(e_j) = \nu_p(d_j)}\right)\right).
\end{align*}
We parametrize $d = e_1d_2d_3d_4d_5$ with $d_j = e_jD_j$, where $D_j|\text{rad}(e_j)$, for each $2 \leq j \leq 4$. Using $\delta' \leq 2^{-2^L}$ and $\delta' \leq Z^{-5}$, we find
\begin{align*}
\mc{T} &\ll_{\e,L} \frac{X^{\e}K^LQ_0^{2^{L-1}+2}}{Q} \ssum_{e_1f_1 = \mf{q}_1 \atop e_4f_4 = \mf{q}_4} \frac{1}{f_1^{1/2}f_4^{\delta'}} \ssum_{e_2f_2g_2 = \mf{q}_2 \atop e_3f_3g_3 = \mf{q}_3} \frac{1}{(f_2f_3)^{\delta'}(g_2g_3)^{\delta'}}\\ 
&\cdot \sum_{D_j|\text{rad}(e_j) \atop 2 \leq j \leq 4} (D_2D_3D_4)^{-1/2} \left(\asum_{C' \pmod{Q_0/(e_1e_2D_2e_3D_3e_4D_4)}} \min\left\{K/Q_0,\frac{1}{Q_0\|C'e_1e_2D_2e_3D_3e_4D_4/Q_0\|}\right\}\right).
\end{align*}
If we bound the sum over $C'$ trivially, we obtain $\ll \frac{\log Q_0}{e_1e_2e_3e_4D_2D_3D_4}$, which finally leads to
\begin{align*}
\mc{T} &\ll_{\e,L} \frac{X^{\e} K^LQ_0^{2^{L-1}+2}}{Q} \ssum_{e_1f_1 = \mf{q}_1 \atop e_4f_4 = \mf{q}_4} \frac{1}{(f_1f_4)^{\delta'}e_1e_4} \ssum_{e_2f_2g_2 = \mf{q}_2 \atop e_3f_3g_3 = \mf{q}_3} \frac{1}{(f_2f_3)^{\delta'}(g_2g_3)^{\delta'}e_2e_3} \\
&\ll_{\e} \frac{X^{\e} K^LQ_0^{2^{L-1}+2}}{Q} (\mf{q}_1\mf{q}_2\mf{q}_3\mf{q}_4)^{-\delta'} \ll_L X^{\e}K^LQ_0^{2^{L-1}+2-\delta'}/Q,
\end{align*} 
again using $\mf{q}_5 \ll_L 1$ and $Q_0 = \mf{q}_1 \mf{q}_2\mf{q}_3\mf{q}_4 \mf{q}_5$. This proves claim ii).
\end{proof}

\section{Proof of Theorems \ref{thm:Smooth} and \ref{thm:notSfree}}
\noindent This section is devoted to the proofs of Theorems \ref{thm:Smooth} and \ref{thm:notSfree}. \\
Let $\e > 0$ be sufficiently small and let $0 < \eta < 1/522$. Let $X$ be sufficiently large in terms of $\e$, and let $X^{2/3-\e} < q \leq X^{3/4+1/1044-\e}$ be $X^{\eta}$-smooth. We wish to show that there is a $\delta = \delta(\e,\eta) > 0$ such that
$$
\Delta_{\mu^2}(X;q,a) = \sum_{n \leq X \atop n \equiv a \pmod{q}} \mu^2(n) - \frac{1}{\phi(q/(a,q))} \sum_{n \leq X \atop (n,q) = (a,q)} \mu^2(n) \ll X^{1-\delta}/q,
$$
for any residue class $a \pmod{q}$ with $(a,q) \leq X^{\e}$, with further size constraints on $q$ depending on whether or not $q$ is squarefree.  \\
Replacing $X,q$ and $a$ by $X/(a,q)$, $q/(q,a)$ and $a/(a,q)$ if necessary, we may reduce our problem to the case in which $a$ is coprime to $q$: indeed, $q/(q,a)$ is still $X^{\eta}$-smooth, and as $(a,q) \leq X^{\e}$ the bound above is implied by the case $(a,q) = 1$ (upon taking $\delta$ slightly smaller). In what follows, we will focus solely on when the residue class $a$ is coprime to $q$. \\
We begin by summarizing the analysis in Section \ref{sec:setupEst}, which is valid when $(a,q) = 1$. Let $\delta > 0$ be fixed, to be selected later. We fixed a scale $V_0 \geq X^{\delta+\e}$, and found a second scale $\max\{X^{1-\delta-\e}/(qV_0),V_0\} \leq V_1 \ll X^{1/2}$ and an interval $I(V_1) = (V_1,V_1 + V_1/V_0]$ such that (see \eqref{eq:impWeil})
\begin{align}
\Delta_{\mu^2}(X;q,a) &\ll_{\e} X^{\e}\frac{V_0V_1}{\tilde{q}} \sum_{f|\tilde{q} \atop f \leq Z} \asum_{k \pmod*{\tilde{q}/f}} \sum_{m \pmod*{\tilde{q}/f} \atop m \neq 0} \frac{\kappa(m;ka,\tilde{q}/f)}{km} + V_0\left( 1 + X^{\e}V_0\tilde{q}^{1/2}Z^{-3/2}\right)  \nonumber\\
&+X^{\e}\left(\frac{X}{qV_0} + \frac{X}{\tilde{q}V_1} + \left(\frac{X}{Z\tilde{q}}\right)^{1/2} + KV_1^2\left(\frac{Z}{\tilde{q}}\right)^{3/2}\right), \label{eq:DeltaMuSumm}
\end{align}
for any divisor $\tilde{q}$ of $q$ and any $Z \geq 1$. We recall here that, given $Q \geq 1$ and $K\geq 1$, we have set
$$
\kappa(M,N;Q) := \max_{1 \leq R \leq K} \left|\sum_{K(M-1) < r \leq K(M-1) + R} e_Q(-rV_1) K_2(N,r;Q)\right|,
$$
for each $M \geq 1$ and $N \in \mb{Z}$ coprime to $Q$. \\
To proceed, we need to be able to choose $\tilde{q}$ of suitable size in order to obtain the desired $O_{\e}(X^{1-\delta}/q)$ bound. The following lemma will be useful in this vein, especially in order to apply the results of the last few sections. 
\begin{lem} \label{lem:factSmooth}
Let $\eta > 0$ and suppose $X \geq 3$ is large enough relative to $\eta$. Let $q \in \mb{N}$ be $X^{\eta}$-smooth. \\
a) If $0 < v < 1$ then there is a divisor $q'$ of $q$ such that $q' \in (q^v,X^{\eta}q^v]$. \\
b) Assume furthermore that $q \in (X,2X]$ is $X^{\eta}$-ultrasmooth. Let $k \geq 4$, and let $u_1,\ldots,u_k \in (\eta,1-\eta)$ with $u_1 + \cdots + u_k = 1$. Then we can find $q_1,\ldots,q_k$, mutually coprime with $(q_k,6) = 1$, such that $q_1 \in (X^{u_1},X^{u_1+\eta}2^{\nu_2(q)}3^{\nu_3(q)}]$, $q_2 \in (X^{u_2},X^{u_2+\eta}]$, $q_j \in (X^{u_j},X^{u_j+\eta}]$ for each $3 \leq j \leq k-1$ and $q_k \in (X^{u_k-(k-1)\eta}2^{-\nu_2(q)}3^{-\nu_3(q)},2X^{u_k}]$ such that $q = q_1\cdots q_k$. 
\end{lem}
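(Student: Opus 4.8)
The plan is to prove both parts of Lemma \ref{lem:factSmooth} by a greedy ``peeling off prime factors'' argument, exploiting the fact that $X^{\eta}$-smoothness means every prime power dividing $q$ (in the ultrasmooth case) or at least every prime (in the smooth case) is small, so that divisors of $q$ cannot jump in size by more than a factor $X^{\eta}$.

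\textbf{Part a).} Write $q = \prod_{p \mid q} p^{\nu_p(q)}$ and enumerate the prime powers $p_1^{a_1}, p_2^{a_2}, \ldots$ dividing $q$. Form partial products $q^{(0)} = 1 \leq q^{(1)} \leq q^{(2)} \leq \cdots \leq q^{(\omega)} = q$ by multiplying in one prime \emph{at a time} (not one prime power at a time — multiply by $p_i$ repeatedly $a_i$ times). Since $q$ is $X^{\eta}$-smooth, every ratio $q^{(j+1)}/q^{(j)}$ equals some prime $p \leq X^{\eta}$. The sequence $q^{(j)}$ thus starts at $1$, ends at $q$, and never increases by more than a factor $X^{\eta}$; hence there is a first index $j$ with $q^{(j)} > q^{v}$, and for that index $q^{(j)} \leq X^{\eta} q^{(j-1)} \leq X^{\eta} q^{v}$. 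Taking $q' := q^{(j)}$, which is a genuine divisor of $q$, gives $q' \in (q^{v}, X^{\eta} q^{v}]$ as required. (One needs $X$ large enough that $X^{\eta} \geq 2$, or rather that the interval is nonempty, which holds once $X^{\eta} > 1$; the hypothesis that $X$ is large relative to $\eta$ absorbs this.)

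\textbf{Part b).} Here I would build $q_1, \ldots, q_k$ successively, again by peeling primes, but now being careful about (i) the special treatment of $2$ and $3$, (ii) coprimality, and (iii) the target size windows. First dispose of $2$ and $3$: since $q$ is $X^{\eta}$-ultrasmooth, $2^{\nu_2(q)}$ and $3^{\nu_3(q)}$ are each $\leq X^{\eta}$, so I put the \emph{entire} prime-power parts $2^{\nu_2(q)} 3^{\nu_3(q)}$ into $q_1$ at the outset; this explains the perturbation factor $2^{\nu_2(q)}3^{\nu_3(q)}$ in the window for $q_1$ and the corresponding $2^{-\nu_2(q)}3^{-\nu_3(q)}$ in the window for $q_k$, and guarantees $(q_k, 6) = 1$. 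Now let $q' := q / (2^{\nu_2(q)}3^{\nu_3(q)})$, which is $X^{\eta}$-ultrasmooth with $(q', 6) = 1$ and $q' \in (X^{1-2\eta}/\text{something}, 2X]$; more precisely $X^{1-o(1)} \ll q' \leq 2X$. Then for $j = 1, 2, \ldots, k-1$ in turn, greedily peel off primes of $q'$ (again one prime at a time, each $\leq X^{\eta}$, and each coprime to everything already used since we simply remove them from $q'$) to form $q_j'$ until the running product first exceeds $X^{u_j}$; by the smoothness-controlled no-large-jumps property this lands $q_j'$ in $(X^{u_j}, X^{u_j + \eta}]$ (with $q_1 = 2^{\nu_2(q)}3^{\nu_3(q)} q_1'$ shifting the window by the stated factor). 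Set $q_k$ to be whatever is left of $q'$ after the first $k-1$ steps; it is automatically coprime to $q_1, \ldots, q_{k-1}$, coprime to $6$, and its size is $q / (q_1 \cdots q_{k-1})$, which lies in $(X^{u_k - (k-1)\eta} 2^{-\nu_2(q)}3^{-\nu_3(q)}, 2X^{u_k}]$ by multiplying the lower/upper bounds on the previous windows against $q \in (X, 2X]$. One must check the windows are consistent: the hypotheses $u_j \in (\eta, 1-\eta)$ and $\sum u_j = 1$, together with $\eta < 1/522$ (so in particular $k\eta$ is small relative to the gaps), ensure each target $X^{u_j}$ is large enough that the greedy step can actually reach it and that $q'$ is not exhausted prematurely; I would verify the running product before step $j$ is at most $X^{u_1 + \cdots + u_{j-1} + (j-1)\eta} < q'$, which follows from $\sum_{i<j} u_i = 1 - \sum_{i \geq j} u_i \leq 1 - (k-j+1)\eta$ and the lower bound on $q'$.

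The only real subtlety — and the step I expect to require the most care — is the bookkeeping in part b) to confirm that at every stage enough of $q'$ remains to meet the next target and that the accumulated $X^{\eta}$-perturbations (one per peeled window, up to $k$ of them, plus the $2,3$ correction) stay within the windows as \emph{stated}, rather than with some larger constant times $\eta$. This is purely a matter of tracking the arithmetic of the exponents and using $u_j > \eta$, $\sum u_j = 1$, and the fact that $X$ is large; there is no conceptual obstacle, just a need to be precise about which perturbation lands in which factor. Everything else is the elementary observation, used twice, that a smooth number's divisor lattice has no multiplicative gaps exceeding $X^{\eta}$.
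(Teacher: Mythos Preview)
Your approach is the same greedy ``peel off small factors'' idea the paper uses, and part a) is fine. But part b) has a genuine gap: peeling \emph{one prime at a time}, as you do in part a) and explicitly carry over (``again one prime at a time''), does \emph{not} give mutually coprime $q_j$. If some $p^{\alpha}\mid q$ with $\alpha\geq 2$, your greedy process may stop in the middle of that prime power, putting some copies of $p$ into $q_j'$ and leaving the rest in $q'$ to be absorbed by $q_{j+1}'$; then $(q_j,q_{j+1})\neq 1$. Your parenthetical justification ``each coprime to everything already used since we simply remove them from $q'$'' is simply false under prime-at-a-time peeling.

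The fix, and the point of the ultrasmooth hypothesis, is to peel off \emph{whole prime powers} $p^{\nu_p(q)}$ at each step. Ultrasmoothness says precisely that each such prime power is $\leq X^{\eta}$, so the no-large-jumps property survives, and now coprimality is automatic because each prime is entirely consumed when first touched. This is what the paper does: it orders the prime-power factors $p_1^{\alpha_1}<\cdots<p_{\omega(q)}^{\alpha_{\omega(q)}}\leq X^{\eta}$ and greedily accumulates them. You invoke ultrasmoothness only to bound $2^{\nu_2(q)}$ and $3^{\nu_3(q)}$, but it is needed throughout part b). Once you make this change your argument goes through; your handling of the $2,3$-part up front (rather than adjusting $q_1$ afterwards as the paper does) is a harmless variation.
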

\begin{proof}
a) Enumerate the prime factors of $q$ in ascending order as $p_1 \leq p_2 \leq \cdots \leq p_{\omega(q)}$. Let $r$ be chosen maximally such that $p_1\cdots p_r \leq q^v$. Then $q' := p_1\cdots p_rp_{r+1} > q^v$ by maximality, and moreover $q' \leq q^v p_{r+1} \leq q^vX^{\eta}$. This establishes the first claim. \\
b) Arguing similarly as in a), order the prime power factors of $q$ as $p_1^{\alpha_1} < \cdots < p_{\omega(q)}^{\alpha_{\omega(q)}} \leq X^{\eta}$. Pick $1 \leq N_1 \leq N$ to be the minimal integer such that $\prod_{1 \leq j \leq N_1} p_j^{\alpha_j} > X^{u_1}$, and set $q_1 := \prod_{1 \leq j \leq N_1} p_j^{\alpha_j}$. By minimality, $q_1/p_{N_1}^{\alpha_{N_1}} \leq X^{u_1}$, whence $q_1 \in (X^{u_1} , X^{u_1+\eta}]$. \\
We set $X' := X/q_1 \in [X^{1-u_1-\eta},X^{1-u_1})$, and $q' := q/q_1$. As $u_2 > 0$, $N_1 < N$. We then select $N_1 < N_2 \leq N$ such that $q_2 := \prod_{N_1 < j \leq N_2} p_j^{\alpha_j} > X^{u_2}$, so that $q_2 \in (X^{u_2},X^{u_2+\eta}]$. \\
Repeating this process $k-1$ times, we obtain integers $q_1,\ldots,q_{k-1}$ such that $q_j \in (X^{u_j},X^{u_{j}+\eta}]$. We replace $q_1$ by $q_12^{\nu_2(q)}3^{\nu_3(q)}$ if the powers of 2 and 3 dividing $q$ are not already divisors of $q_1,\ldots,q_{k-1}$. The factors $q_j$ are mutually coprime by construction. Putting $q_k := q/(q_1\cdots q_{k-1})$, the above construction and $q \in (X,2X]$ forces $(q_k,6) = 1$ and
$$
q_k \in (X^{1-u_1-\ldots-u_{k-1}-(k-1)\eta}2^{-\nu_2(q)}3^{-\nu_3(q)},2X^{1-u_1-\ldots-u_{k-1}}],
$$ 
which implies the claim since $u_k = 1-u_1-\ldots - u_{k-1}$ by definition.
\end{proof}
\subsection{First Result: $q \leq X^{3/4-\e}$} \label{subsec:3of4}
We begin by considering the easier range $X^{2/3-\e} < q \leq X^{3/4-\e}$, in which we need not assume that $q$ is squarefree. Let $\eta/12 \leq \delta < 1/50$. By Lemma \ref{lem:factSmooth} a), we can choose a divisor $\tilde{q}$ of $q$ with $\tilde{q} \in (X^{1/2 + 12\delta}, X^{1/2+12\delta+\eta}]$. Take $V_0 = K = X^{\delta+\e}$ and $Z= X^{10\delta}$. Using Lemma \ref{lem:WeilK2}, we can bound
$$
\max_{q'\mid\tilde{q}} \max_{1 \leq k' \leq q' \atop (k',q') = 1}  \max_{1 \leq m \leq q'/K} \kappa(m;k'a,q') \ll K \tilde{q}^{1/2+\e} \ll X^{\delta+\e} X^{1/4 + 6\delta + \eta/2}.
$$
From \eqref{eq:DeltaMuSumm} and the above parameter choices, as well as the lower bound $V_1 \geq X^{1-\delta-\e}/(qV_0) \geq X^{1/4-2\delta-3\e}$ that we may assume (see \eqref{eq:assumpV1} above), we obtain
\begin{align*}
\Delta_{\mu^2}(X;q,a) &\ll_{\e} \frac{X^{2\delta+2\e + 1/2}}{\tilde{q}^{1/2}}+X^{2\delta + 3\e-15\delta}\tilde{q}^{1/2} + X^{\delta+\e} + \frac{X^{1-\delta}}{q} + \frac{X^{1+\e}}{\tilde{q}V_1} \\
&+ X^{\delta+\e}X^{1/2(1-10\delta)}\tilde{q}^{-1/2}+ X^{1+\delta + \e + 15\delta}\tilde{q}^{-3/2} \\
&\ll_{\e} X^{1/4-4\delta + 2\e} + X^{1/4-7\delta + \eta/2 +\e} + X^{1/4-\delta + \e} + \frac{X^{1-\delta}}{q} +X^{1/4-10\delta + 2\e} \\
&+ X^{1/4-10\delta + 4\e} + X^{1/4-2\delta + \e} \\
&\ll X^{1-\delta}/q,
\end{align*}
%
%
%
%
%
%
%
using the fact that $\eta \leq 12\delta$ in the last line. This implies the claim for $q \leq X^{3/4-\e}$. 
\begin{rem}
The above proof shows that we can obtain power-savings in the range $q \leq X^{3/4-\e}$ for $q$ that are $X^{\eta}$-smooth with any $\eta < 6/25$, since in this case the choice $\delta = \eta/12$ is admissible.
\end{rem}
\subsection{Squarefree $X^{3/4-\e} < q \leq X^{3/4+1/1044-\e}$}
In this range of the modulus $q$ we will invoke Propositions \ref{prop:qvdC} and \ref{prop:ThEst} i), and to this end we need a suitable divisor $\tilde{q}$ of $q$ \emph{as well as} a suitable factorization for $\tilde{q}$. We begin this subsection by outlining the parameter choices needed to this end. \\
Let $L \geq 2$. Fix $0 < \delta < 1/10$ to be chosen later, and let $\theta := 3/4+\lambda$, where $\eta/2 < \lambda < \min\{1/20,1/(2L)\}$. We will determine constraints on $\lambda$ momentarily, from which we will conclude that any $\lambda < 1/1044$ will be admissible. \\
Set $\gamma := 2\delta +\lambda + \e$, for $\e > 0$ small, and put
$$
\sg := \frac{1}{L} + \frac{2(2^{L+2}+ L)}{L}\gamma,
$$
assuming that $\gamma$ and $L$ are chosen so that this is $< 1/4+\lambda$. Also, put $K := \llf X^{\sg/2-\lambda}\rrf$. 
Suppose that $q \in (X^{\theta},2X^{\theta}]$ is squarefree and $X^{\eta}$-smooth. As $\sg < 1/4 + \lambda$, by Lemma \ref{lem:factSmooth} a) we may choose $\tilde{q} \in (X^{1/2 + \sg},X^{1/2+\sg+\eta}]$.  \\
Of course, $\tilde{q}$ is also $X^{\eta}$-smooth. Applying Lemma \ref{lem:factSmooth} b), we can find a factorization $\tilde{q} = \tilde{q}_0 \tilde{q}_1\cdots \tilde{q}_L$, such that
\begin{align*}
\tilde{q}_{L-j+1} &\in (X^{\sg/2-(2^j+1) \gamma - \eta}, X^{\sg/2-(2^j+1)\gamma}] \text{ for all $1 \leq j \leq L-1$,} \\
\tilde{q}_1 &\in (X^{\sg/2-(2^L+1)\gamma-\eta},6X^{\sg/2-(2L+1)\gamma}], \\
\tilde{q}_0 &\in [X^{\sg - (2^{L+1}+2)\gamma}/6, 2X^{\sg-(2^{L+1}+2)\gamma + L\eta}).
\end{align*}
This is indeed possible since $\tilde{q} \in (X^{1/2+\sg},X^{1/2+\sg+\eta}]$ and
\begin{align*}
&\sg-(2^{L+1}+2)\gamma + L\eta + \sum_{1 \leq j \leq L} \left(\frac{\sg}{2} -(2^j+1) \gamma - \eta\right) = (L/2+1)\sg - (2^{L+2}+ L)\gamma \\
&= \frac{1}{2} + \sg + \frac{L}{2}\left(\sg - \frac{1}{L} - \frac{2(2^{L+2}+ L)}{L}\gamma\right) = \frac{1}{2}+\sg.
\end{align*}
Note that, by construction, we have $K \geq \max\{\tilde{q}_1,\ldots,\tilde{q}_L\}$, and moreover
\begin{align} \label{eq:tilde_qjBd}
\max\left\{\left(\frac{\tilde{q}_0^{1/2}}{K}\right)^{2^{-L}}, \left(\frac{\tilde{q}_{L-j+1}}{K}\right)^{2^{-j}}\right\} \ll X^{-\gamma} \text{ for all $1 \leq j \leq L$.}
\end{align}
In addition, since $\gamma \leq \sg/2$ then
\begin{equation}\label{eq:tilde_q0Term}
\tilde{q}_0^{-2^{-(L+1)}} \ll X^{-2^{-(L+1)}\sg + (1+2^{-L})\gamma} \ll X^{-\gamma}.
\end{equation}
Next, select $V_0 = X^{\delta+\e}$ and $Z = X^{\frac{2}{3}(\sg/2+4\delta+2\lambda-2\e)}$, which satisfies $Z \leq \tilde{q}/K$. We apply all of these parameter choices in \eqref{eq:DeltaMuSumm}, as well as the inequalities
\begin{enumerate}[(i)]
\item $\eta/2 < \lambda < 1/20$ and $\delta < 1/10$, 
\item $X^{1-\delta}/q \geq \frac{1}{2}X^{1/4-\delta-\lambda}$,
\item $\sg > 10\gamma > 10(\delta+\lambda)$, and
\item $X^{1-\delta-\e}/(qV_0) \leq V_1 \ll X^{1/2}$,
\end{enumerate}
to obtain
\begin{align} \label{eq:DeltaMuX}
&\Delta_{\mu^2}(X;q,a) \ll_{\e} X^{\frac{1}{2} + \delta + 2\e}\tilde{q}^{-1} \max_{f|\tilde{q} \atop f \leq Z} \asum_{k \pmod*{\tilde{q}/f}} \sum_{m \pmod*{\tilde{q}/f} \atop m \neq 0} \frac{|\kappa(m;ka,\tilde{q}/f)|}{km} + X^{2\delta - (\sg/2+4\delta + 2\lambda-2\e) + 3\e}\tilde{q}^{1/2} \nonumber\\
&+ \frac{X^{1-\delta}}{q} + \frac{X^{1+\e}}{\tilde{q}V_1} + X^{1/2-\frac{1}{3}(\sg/2+4\delta + 2\lambda-2\e)+ \e}\tilde{q}^{-1/2} + X^{1+\sg/2+(\sg/2+4\delta+2\lambda-2\e)-\lambda+\e}\tilde{q}^{-3/2} \nonumber \\
&= X^{-\sg + \delta + 2\e} \max_{f|\tilde{q} \atop f \leq Z} \asum_{k \pmod*{\tilde{q}/f}} \sum_{m \pmod*{\tilde{q}/f} \atop m \neq 0} \frac{|\kappa(m;ka,\tilde{q}/f)|}{km} + \frac{X^{1-\delta}}{q}\cdot X^{- \delta-\lambda +\eta/2+ 5\e}  \nonumber\\
&+ \frac{X^{1-\delta}}{q}\left(1 + X^{-\sg+3\delta+2\lambda+2\e} + X^{-\frac{2\sg}{3}-\delta/3 + \lambda/3+5\e/3} + X^{-\sg/2+5\delta+\lambda+\eta-\e}\right) \nonumber\\
&\ll_{\e} X^{-\sg + \delta + 2\e} \max_{f|\tilde{q} \atop f \leq Z} \asum_{k \pmod*{\tilde{q}/f}} \sum_{m \pmod*{\tilde{q}/f} \atop m \neq 0} \frac{|\kappa(m;ka,\tilde{q}/f)|}{km} + \frac{X^{1-\delta}}{q},
\end{align}
provided that $\e > 0$ is sufficiently small. \\
Fix $f_0 \mid \tilde{q}$ with $f_0 \leq Z$ that maximizes the sum over $f$ in \eqref{eq:DeltaMuSumm}, and let $q' := \tilde{q}/f_0$. We can factor $q' = q_0' \cdots q_L'$, as in Proposition \ref{prop:qvdC} by writing $\tilde{q} = \tilde{q}_0\cdots \tilde{q}_L$ and setting $q_j' := \tilde{q}_j/(\tilde{q}_j,f_0)$, for each $j$. 
Putting $Q = q'$ and recalling that $K \geq \max_{1 \leq j \leq L} \tilde{q}_j$, we may combine Proposition \ref{prop:ThEst} i) with Proposition \ref{prop:qvdC} to get that for each $1 \leq m \leq q'-1$ and each $1 \leq k \leq q'-1$ coprime to $q'$,
\begin{align*}
  |\kappa(m;ka,q')| &\ll_{\e,L} (q')^{1/2+\e}K\left(\sum_{1 \leq j \leq L} \left(\frac{q'_{L-j+1}}{K}\right)^{2^{-j}}+ \left(\frac{q'(q')^{\e}(q_0')^{2^{L-1}+3/2}}{K^{L+1}(q_0')^{2^{L-1}+1}} \frac{(2K)^L}{q'}(K/q'_0+1)\right)^{2^{-L}}\right) \\
&\ll_L (\tilde{q}/f_0)^{1/2+\e} K\left(\sum_{1 \leq j \leq L} \left(\frac{\tilde{q}_{L-j+1}}{(f_0,\tilde{q}_{L-j+1})K}\right)^{2^{-j}} + \left(\frac{\sqrt{\tilde{q}_0/(\tilde{q}_0,f_0)}}{K}\right)^{2^{-L}} + (\tilde{q}_0/(\tilde{q}_0,f_0))^{-2^{-L-1}}\right) \\
&\ll K\tilde{q}^{1/2}X^{-\gamma} \ll X^{\frac{1}{4} + \sg + \eta/2-\gamma-\lambda},
\end{align*}
using $f_0^{-1/2+\e}(\tilde{q}_0,f_0)^{2^{-L-1}} \ll_{\e} 1$ whenever $L \geq 2$. Inserting this bound into \eqref{eq:DeltaMuX} and summing $m$ and $k$, when $\e$ is sufficiently small we obtain
\begin{align*}
\Delta_{\mu^2}(X;q,a) &\ll_{\e} X^{-\sg + \delta + 3\e} \cdot X^{\frac{1}{4}+\sg+\eta/2-\gamma-\lambda} + \frac{X^{1-\delta}}{q} \\
&=  \frac{X^{1-\delta}}{q}\left(1+ X^{-\lambda+\eta/2+ 3\e}\right) \ll \frac{X^{1-\delta}}{q}.
\end{align*}
It remains to show that any $\lambda < 1/1044$ is admissible. With the parameter choices made earlier, we have assumed that 
$$
\sg = \frac{1}{L} + \frac{2(2^{L+2}+ L)\gamma}{L} < \frac{1}{4} + \lambda < \frac{1}{4} + \gamma,
$$
which forces $L \geq 5$, and 
$$
\gamma \leq \frac{L/4-1}{2^{L+3}+L}.
$$
Since the bounds are decreasing in $L$, we deduce by setting $L = 5$ that
$$
\lambda < \gamma \leq \frac{1}{1044}.
$$
Furthermore, as $\gamma = 2\delta + \lambda$, we may always choose $\delta$ small enough so that any $\lambda > 1/1044 -\e'$ is obtainable, for any $\e' > 0$. We thus deduce that for $\eta > 0$ sufficiently small (in particular, smaller than $2\lambda$) we can find $\delta = \delta(\eta,\e)$ such that if $q \leq X^{3/4+1/1044-\e}$ is smooth and squarefree then 
$$
\Delta_{\mu^2}(X;q,a) \ll_{\e} X^{1-\delta}/q,
$$
for any residue class $a$ modulo $q$.
%

\subsection{Non-squarefree $q > X^{3/4-\e}$}
The proof follows similar lines to that of Theorem \ref{thm:Smooth}, but invoking Proposition \ref{prop:ThEst} ii), rather than i). The choice of parameters can be rigged up similarly as in the previous proof, save that the factors $\tilde{q}_0,\ldots,\tilde{q}_L$ must be chosen to satisfy $K \geq  \max\{\tilde{q}_1,\ldots,\tilde{q}_L\}$, and also
$$
\max\left\{\left(\frac{\tilde{q}_0^{1-\delta'}}{K}\right)^{2^{-L}},\left(\frac{\tilde{q}_{L-j+1}}{K}\right)^{2^{-j}}\right\} \ll X^{-\gamma},
$$
in analogy to \eqref{eq:tilde_qjBd} above, which amounts to replacing the condition $\tilde{q}_0^{-2^{-(L+1)}} \ll X^{-\gamma}$ by $\tilde{q}_0^{-(1-\delta')2^{-L}} \ll X^{\gamma}$. We also must choose $\tilde{q}_0$ to be coprime to $2$ and $3$ in order to apply the results of Section \ref{sec:corPrimePow}, but by the $X^{\eta}$-ultrasmooth condition this can be done at a cost of $X^{2\eta}$ in precision in the choice of $\tilde{q}_1$ (as is done explicitly in Lemma \ref{lem:factSmooth} b). \\
Finally, in order to apply Lemma \ref{lem:ctrlHI} we must assume $K/\tilde{q}_j \geq \tilde{q}_0^{\delta'}$ for all $1 \leq j \leq L$, where $\delta' = \delta'(L) \in (0,2^{-2^L}]$ arises in Proposition \ref{prop:ThEst}. This can be assured in light of the choice $\sg$ from the previous subsection: up to $X^{\eta}$ factors (where $\eta$ can be chosen as small as desired), we have, uniformly in $1 \leq j \leq L$,
$$
K/\tilde{q}_j \geq \frac{1}{2} X^{2^j \gamma + 2\delta} \geq \frac{1}{2}X^{4\gamma + 2\delta},
$$
with $\gamma = 2\delta + \lambda$, whereas
$$
\tilde{q}_0^{\delta'} \leq 2 X^{2\sg \delta' + \delta'\eta L} \leq X^{\delta'(2/L + \eta L) + (2^{L+3}+2L)\gamma\delta'/L},
$$
so that as $\delta' \leq 2^{-2^L}$ it suffices, for instance, to have $\gamma + \delta > \delta'(1/L + \eta L)$. We leave to the interested reader the determination of an explicit choice of $\lambda$ and $\delta$ (both of which necessarily depend on $\delta'$, which could be reduced if necessary) in which the range $q \leq X^{3/4 + \lambda}$ is admissible with power savings $X^{1-\delta}/q$.
%
%

\begin{proof}[Proof of Corollary \ref{cor:posProp}]
Put $Y := X^{196/261-\e}$, and set $u := (\log Y)/(\eta\log X)$. It suffices to show that 
$$
|\{q \leq Y : P^+(q) \leq X^{\eta}, \mu^2(q) = 1\}| \gg_{\eta} Y.
$$
Of course, we have
\begin{align*}
\sum_{q \leq Y} \mu^2(q)1_{P^+(q) \leq X^{\eta}} = \sum_{d \leq Y^{1/2}} \mu(d)1_{P^+(d) \leq X^{\eta}} \sum_{m \leq Y/d^2} 1_{P^+(m) \leq X^{\eta}}.
\end{align*}
Put $D := (\log X)^{1/2}$. Then we trivially bound
$$
\sum_{D < d \leq Y^{1/2}} \mu(d)1_{P^+(d) \leq X^{\eta}} \sum_{m \leq Y/d^2} 1_{P^+(m) \leq X^{\eta}} \ll Y\sum_{d > D} d^{-2} \ll Y(\log X)^{-1/2}.
$$
On the other hand, standard estimates for smooth numbers (see e.g. Theorem III.5.8 of \cite{Ten}) yield
\begin{align*}
&\sum_{d \leq D} \mu(d)1_{P^+(d) \leq X^{\eta}} \sum_{m \leq Y/d^2} 1_{P^+(m) \leq X^{\eta}} \\
&= Y\sum_{d \leq D} \frac{\mu(d)\rho(\log(Y/d^2)/\eta \log X)}{d^2}1_{P^+(d) \leq X^{\eta}} + O_{\eta}(Y D/\log X)
\\
&= \frac{6}{\pi^2}\rho(u) Y + O_{\eta}\left(Y\sum_{d \leq D} \frac{|\rho(u) - \rho(u-v_d)|}{d^2} + Y(\log X)^{-1/2}\right),
\end{align*}
where $\rho$ denotes the Dickman function, and for each $d \leq D$ we set $v_d := \frac{2(\log d)}{\eta \log X}$. As
$$
w\rho'(w) = -\rho(w-1)
$$
for $w > 1$, we observe that for each $d \leq D$,
\begin{align*}
|\rho(u-v_d)-\rho(u)| &= \left|\int_{u-v_d}^u \rho'(t) dt\right| \leq v_d \max_{0 \leq t \leq v_d} |\rho'(u-t)| \\
&\ll_{\eta} \frac{\log\log X}{\log X} \frac{\rho(u-2)}{u} \ll_{\eta} \frac{\log\log X}{\log X},
\end{align*}
provided $X$ is large enough in terms of $\eta$. We thus deduce that
$$
|\{q \leq Y : P^+(q) \leq X^{\eta}, \mu^2(q) = 1\}| = \frac{6}{\pi^2}\rho(u)Y \left(1+ O_{\eta}(1/\sqrt{\log X})\right) \gg_{\eta} Y,
$$
as $u \ll_{\eta} 1$, and the claim follows.
\end{proof}

\bibliography{SmoothSfree}

\begin{thebibliography}{10}

\bibitem{Bomb66}
Enrico Bombieri.
\newblock On exponential sums in finite fields.
\newblock {\em Am. J. Math.}, 88(1):71--105, 1966.

\bibitem{BDG}
J.~Bourgain, C.~Demeter, and L.~Guth.
\newblock Proof of the main conjecture in {V}inogradov's mean value theorem for
  degrees higher than three.
\newblock {\em Ann. of Math.}, 184(2):633–--682, 2016.

\bibitem{DelEC}
Pierre Deligne.
\newblock {\em Cohomologie \'{e}tale}, volume 569 of {\em Lecture Notes in
  Mathematics}.
\newblock Springer-Verlag, Berlin, 1977.
\newblock S\'{e}minaire de g\'{e}om\'{e}trie alg\'{e}brique du Bois-Marie SGA
  $4\frac{1}{2}$.

\bibitem{Del2}
Pierre Deligne.
\newblock La conjecture de {W}eil. {II}.
\newblock {\em Publications Math\'{e}matiques de l'Institut des Hautes
  \'{E}tudes Scientifiques}, 52(1):137--252, 1980.

\bibitem{FGKM14}
Etienne Fouvry, Satadal Ganguly, Emmanuel Kowalski, and Philippe Michel.
\newblock Gaussian distribution for the divisor function and hecke eigenvalues
  in arithmetic progressions.
\newblock {\em Commentarii Math. Helv.}, 89(4):979--1014, 2014.

\bibitem{FKMSumProducts}
\'{E}tienne Fouvry, Emmanuel Kowalski, and Philippe Michel.
\newblock A study in sums of products.
\newblock {\em Philos. Trans. A}, 373(2040), 2015.

\bibitem{HB}
D.R. Heath-Brown.
\newblock The largest prime factor of $x^3+2$.
\newblock {\em Proc. London Math. Soc.}, 82(3):554--596, 2001.

\bibitem{Hoo75}
C.~Hooley.
\newblock A note on square-free numbers in arithmetic progressions.
\newblock {\em Bulletin of the London Mathematical Society}, 7(2):133--138,
  1975-07.

\bibitem{Irv}
Alastair~J. Irving.
\newblock The divisor function in arithmetic progressions to smooth moduli.
\newblock {\em Int. Math. Res. Not.}, (15), 2015.

\bibitem{IK}
H.~Iwaniec and E.~Kowalski.
\newblock {\em Analytic number theory}.
\newblock Colloquium Publications, American Mathematical Society, 2004.

\bibitem{KatzGKM}
Nicholas~M. Katz.
\newblock {\em Gauss sums, {K}loosterman sums, and monodromy Groups}, volume
  116 of {\em Annals of Mathematics Studies}.
\newblock Princeton University Press, 1988.

\bibitem{KatzESDE}
Nicholas~M. Katz.
\newblock {\em Exponential sums and differential equations}, volume 124 of {\em
  Annals of Mathematics Studies}.
\newblock Princeton University Press, 1990.

\bibitem{LSZ}
K.~Liu, I.E. Shparlinski, and T.~Zhang.
\newblock Average distribution of $k$-free integers in arithmetic progressions.
\newblock {\em Math. Nach.}, pages 1--10, 2020.

\bibitem{MilZhan}
D.~Mili\'{c}evi\'{c} and S.~Zhang.
\newblock Distribution of {K}loosterman paths to high prime power moduli.
\newblock 2020.
\newblock arXiv:2005.08865v1[math.NT].

\bibitem{Nun}
R.~M. Nunes.
\newblock On the least squarefree number in an arithmetic progression.
\newblock {\em Mathematika}, 63(2):483--498, 2017.

\bibitem{PGThe}
Corentin Perret-Gentil.
\newblock {\em Probabilistic aspects of short sums of trace functions over
  finite fields}.
\newblock PhD thesis, ETH Z\"{u}rich, 2016.

\bibitem{PGGaussDistr16}
Corentin Perret-Gentil.
\newblock Gaussian distribution of short sums of trace functions over finite
  fields.
\newblock {\em Mathematical Proceedings of the Cambridge Philosophical
  Society}, 163(3):385--422, 2017.

\bibitem{Prach}
Karl Prachar.
\newblock {\"U}ber die kleinste quadratfreie zahl einer arithmetischen reihe.
\newblock {\em Monhatshefte f{\"u}r Mathematik}, 62:173--176, 1958.

\bibitem{RicRoy}
G.~Ricotta and E.~Royer.
\newblock Kloosterman paths of prime power moduli.
\newblock {\em Comm. Math. Helv.}, (93), 2018.

\bibitem{Shiu}
P.~Shiu.
\newblock A {B}run-{T}itchmarsh theorem for multiplicative functions.
\newblock {\em Journal f\"ur die reine und angewandte Mathematik},
  313:161--170, 1980.

\bibitem{Stei}
R.S. Steiner.
\newblock Effective {V}inogradov's mean value theorem via efficient boxing.
\newblock {\em J. Number Theory}, 204:354--404, 2019.

\bibitem{Ten}
G.~Tenenbaum.
\newblock {\em Introduction to analytic and probabilistic number theory, 3rd
  Ed.}
\newblock Graduate Studies in Mathematics vol. 163, American Mathematical
  Society, 2015.

\bibitem{Woo}
T.D. Wooley.
\newblock Nested efficient congruencing and relatives of {V}inogradov's mean
  value theorem.
\newblock {\em Proc. London Math. Soc.}, 118(4):942--1016, 2019.

\end{thebibliography}
\bibliographystyle{plain}
\end{document}